\documentclass{amsart}
\usepackage{latexsym}
\usepackage{amssymb, amsbsy, amsthm, amsmath, amstext, amsopn, verbatim}
\usepackage[color,all]{xy}
\usepackage{amsfonts}
\usepackage{amscd}
\usepackage{mathrsfs}
\usepackage{verbatim}
\hyphenation{para-met-riz-ed para-met-rize}
\usepackage{xcolor}
\usepackage{tabularx}

\input xy
\xyoption{all}
%amssymb, amsmath, amsfonts, latexsym, amscd}

\usepackage{parcolumns}

\newtheorem{thm}{Theorem} [section]
\newtheorem{lemma}[thm]{Lemma}

\newtheorem{corollary}[thm]{Corollary}
\newtheorem{prop}[thm]{Proposition}
\newtheorem{notation}[thm]{Notation}

\newtheorem{assumption}[thm]{Assumption}

\newtheorem*{rough-thm-1}{Rough Version of Vanishing Theorem}
\newtheorem*{rough-thm-2}{Rough Version of Exactness Theorem}
\newtheorem*{hkks}{Kirwan Surjectivity Problem}
\newtheorem*{main example}{Main Example}

\theoremstyle{definition}

\newtheorem*{basic convention}{Basic Conventions}

\newtheorem{defn}[thm]{Definition}

\newtheorem{example}[thm]{Example}

\newtheorem{construction}[thm]{Construction}
\newtheorem{convention}[thm]{Convention}

\theoremstyle{remark}

\newtheorem{remark}[thm]{Remark}

\oddsidemargin .225in
\evensidemargin .225in
\textwidth 6in
\textheight 8.2in

\begin{document}

\numberwithin{equation}{section}

\newcommand{\hs}{\mbox{\hspace{.4em}}}
\newcommand{\ds}{\displaystyle}
\newcommand{\bd}{\begin{displaymath}}
\newcommand{\ed}{\end{displaymath}}
\newcommand{\bcd}{\begin{CD}}
\newcommand{\ecd}{\end{CD}}

\newcommand{\proj}{\operatorname{Proj}}
\newcommand{\bproj}{\underline{\operatorname{Proj}}}
\newcommand{\spec}{\operatorname{Spec}}
\newcommand{\bspec}{\underline{\operatorname{Spec}}}
\newcommand{\pline}{{\mathbf P} ^1}
\newcommand{\pplane}{{\mathbf P}^2}
\newcommand{\coker}{{\operatorname{coker}}}
\newcommand{\ldb}{[[}
\newcommand{\rdb}{]]}

\newcommand{\Sym}{\operatorname{Sym}^{\bullet}}
\newcommand{\Symp}{\operatorname{Sym}}
\newcommand{\Pic}{\operatorname{Pic}}
\newcommand{\AAut}{\operatorname{Aut}}
\newcommand{\PAut}{\operatorname{PAut}}

\newcommand{\too}{\twoheadrightarrow}
\newcommand{\C}{{\mathbf C}}
\newcommand{\cA}{{\mathcal A}}
\newcommand{\cS}{{\mathcal S}}
\newcommand{\cV}{{\mathcal V}}
\newcommand{\cM}{{\mathcal M}}
\newcommand{\bA}{{\mathbf A}}
\newcommand{\aline}{\mathbb{A}^1}
\newcommand{\cB}{{\mathcal B}}
\newcommand{\cC}{{\mathcal C}}
\newcommand{\cD}{{\mathcal D}}
\newcommand{\D}{{\mathcal D}}
\newcommand{\cs}{{\mathbf C} ^*}
\newcommand{\boldc}{{\mathbf C}}
\newcommand{\cE}{{\mathcal E}}
\newcommand{\cF}{{\mathcal F}}
\newcommand{\cG}{{\mathcal G}}
\newcommand{\G}{{\mathbf G}}
\newcommand{\fg}{{\mathfrak g}}
\newcommand{\ft}{\mathfrak t}
\newcommand{\bH}{{\mathbf H}}
\newcommand{\cH}{{\mathcal H}}
\newcommand{\cI}{{\mathcal I}}
\newcommand{\cJ}{{\mathcal J}}
\newcommand{\cK}{{\mathcal K}}
\newcommand{\cL}{{\mathcal L}}
\newcommand{\baL}{{\overline{\mathcal L}}}
\newcommand{\M}{{\mathcal M}}
\newcommand{\bM}{{\mathbf M}}
\newcommand{\bm}{{\mathbf m}}
\newcommand{\cN}{{\mathcal N}}
\newcommand{\theo}{\mathcal{O}}
\newcommand{\cP}{{\mathcal P}}
\newcommand{\cR}{{\mathcal R}}
\newcommand{\boldp}{{\mathbf P}}
\newcommand{\boldq}{{\mathbf Q}}
\newcommand{\bbL}{{\mathbf L}}
\newcommand{\cQ}{{\mathcal Q}}
\newcommand{\cO}{{\mathcal O}}
\newcommand{\Oo}{{\mathcal O}}
\newcommand{\OX}{{\Oo_X}}
\newcommand{\OY}{{\Oo_Y}}
\newcommand{\otY}{{\underset{\OY}{\ot}}}
\newcommand{\otX}{{\underset{\OX}{\ot}}}
\newcommand{\cU}{{\mathcal U}}
\newcommand{\cX}{{\mathcal X}}
\newcommand{\cW}{{\mathcal W}}
\newcommand{\boldz}{{\mathbf Z}}
\newcommand{\cZ}{{\mathcal Z}}
\newcommand{\qgr}{\operatorname{qgr}}
\newcommand{\gr}{\operatorname{gr}}
\newcommand{\coh}{\operatorname{coh}}
\newcommand{\End}{\operatorname{End}}
\newcommand{\Hom}{\operatorname{Hom}}
\newcommand{\uHom}{\underline{\operatorname{Hom}}}
\newcommand{\uHomY}{\uHom_{\OY}}
\newcommand{\uHomX}{\uHom_{\OX}}
\newcommand{\Ext}{\operatorname{Ext}}
\newcommand{\bExt}{\operatorname{\bf{Ext}}}
\newcommand{\Tor}{\operatorname{Tor}}

\newcommand{\inv}{^{-1}}
\newcommand{\airtilde}{\widetilde{\hspace{.5em}}}
\newcommand{\airhat}{\widehat{\hspace{.5em}}}
\newcommand{\nt}{^{\circ}}
\newcommand{\del}{\partial}

\newcommand{\supp}{\operatorname{supp}}
\newcommand{\GK}{\operatorname{GK-dim}}
\newcommand{\hd}{\operatorname{hd}}
\newcommand{\id}{\operatorname{id}}
\newcommand{\res}{\operatorname{res}}
\newcommand{\lrar}{\leadsto}
\newcommand{\im}{\operatorname{Im}}
\newcommand{\HH}{\operatorname{H}}
\newcommand{\TF}{\operatorname{TF}}
\newcommand{\Bun}{\operatorname{Bun}}
\newcommand{\Hilb}{\operatorname{Hilb}}
\newcommand{\Fact}{\operatorname{Fact}}
\newcommand{\F}{\mathcal{F}}
\newcommand{\nthord}{^{(n)}}
\newcommand{\Aut}{\underline{\operatorname{Aut}}}
\newcommand{\Gr}{\operatorname{Gr}}
\newcommand{\Fr}{\operatorname{Fr}}
\newcommand{\GL}{\operatorname{GL}}
\newcommand{\gl}{\mathfrak{gl}}
\newcommand{\SL}{\operatorname{SL}}
\newcommand{\ff}{\footnote}
\newcommand{\ot}{\otimes}
\def\Ext{\operatorname {Ext}}
\def\Hom{\operatorname {Hom}}
\def\Ind{\operatorname {Ind}}
\def\bbZ{{\mathbb Z}}

\newcommand{\nc}{\newcommand}
\newcommand{\on}{\operatorname}
\nc{\cont}{\on{cont}}
\nc{\rmod}{\on{mod}}
\nc{\Mtil}{\widetilde{M}}
\nc{\wb}{\overline}
\nc{\wt}{\widetilde}
\nc{\wh}{\widehat}
\nc{\sm}{\setminus}
\nc{\mc}{\mathcal}
\nc{\mbb}{\mathbb}
\nc{\Mbar}{\wb{M}}
\nc{\Nbar}{\wb{N}}
\nc{\Mhat}{\wh{M}}
\nc{\pihat}{\wh{\pi}}
\nc{\JYX}{\cJ_{Y\leftarrow X}}
\nc{\phitil}{\wt{\phi}}
\nc{\Qbar}{\wb{Q}}
\nc{\DYX}{\D_{Y\leftarrow X}}
\nc{\DXY}{\D_{X\to Y}}
\nc{\dR}{\stackrel{\bbL}{\underset{\D_X}{\ot}}}
\nc{\Winfi}{\cW_{1+\infty}}
\nc{\K}{{\mc K}}
\nc{\unit}{{\bf \on{unit}}}
\nc{\boxt}{\boxtimes}
\nc{\xarr}{\stackrel{\rightarrow}{x}}
\nc{\Cnatbar}{\overline{C}^{\natural}}
\nc{\oJac}{\overline{\on{Jac}}}
\nc{\gm}{{\mathbf G}_m}
\nc{\Loc}{\on{Loc}}
\nc{\Bm}{\operatorname{Bimod}}
\nc{\lie}{{\mathfrak g}}
\nc{\lb}{{\mathfrak b}}
\nc{\lien}{{\mathfrak n}}
\nc{\e}{\epsilon}
\nc{\eu}{\mathsf{eu}}

\nc{\Gm}{{\mathbb G}_m}
\nc{\Gabar}{\wb{\G}_a}
\nc{\Gmbar}{\wb{\G}_m}
\nc{\PD}{{\mathbb P}_{\D}}
\nc{\Pbul}{P_{\bullet}}
\nc{\PDl}{{\mathbb P}_{\D(\lambda)}}
\nc{\PLoc}{\mathsf{MLoc}}
\nc{\Tors}{\on{Tors}}
\nc{\PS}{{\mathsf{PS}}}
\nc{\PB}{{\mathsf{MB}}}
\nc{\Pb}{{\underline{\operatorname{MBun}}}}
\nc{\Ht}{\mathsf{H}}
\nc{\bbH}{\mathbb H}
\nc{\gen}{^\circ}
\nc{\Jac}{\operatorname{Jac}}
\nc{\sP}{\mathsf{P}}
\nc{\sT}{\mathsf{T}}
\nc{\bP}{{\mathbb P}}
\nc{\otc}{^{\otimes c}}
\nc{\Det}{\mathsf{det}}
\nc{\PL}{\on{ML}}
\nc{\sL}{\mathsf{L}}

\nc{\ml}{{\mathcal S}}
\nc{\Xc}{X_{\on{con}}}
\nc{\Z}{{\mathbb Z}}
\nc{\resol}{\mathfrak{X}}
\nc{\map}{\mathsf{f}}
\nc{\gK}{\mathbb{K}}
\nc{\bigvar}{\mathsf{W}}
\nc{\Tmax}{\mathsf{T}^{md}}

\nc{\Cpt}{\mathbb{P}}
\nc{\pv}{e}

\nc{\Qgtr}{Q^{\on{gtr}}}
\nc{\algtr}{\alpha^{\on{gtr}}}
\nc{\Ggtr}{\mathbb{G}^{\on{gtr}}}
\nc{\chigtr}{\chi^{\on{gtr}}}

\newcommand{\la}{\langle}
\newcommand{\ra}{\rangle}
\newcommand{\fm}{\mathfrak m}
\newcommand{\Ms}{\mathfrak M}
\newcommand{\Ma}{\mathfrak M_0}
\newcommand{\ML}{\mathfrak L}
\newcommand{\ev}{\textit{ev}}
\nc{\thetagtr}{\theta^{\on{gtr}}}

\numberwithin{equation}{section}

\title{Kirwan Surjectivity for Quiver Varieties}

\author{Kevin McGerty}
\address{Mathematical Institute\\University of Oxford\\Oxford OX1 3LB, UK}
\email{mcgerty@maths.ox.ac.uk}
\author{Thomas Nevins}
\address{Department of Mathematics\\University of Illinois at Urbana-Champaign\\Urbana, IL 61801 USA}
\email{nevins@illinois.edu}

%\date{\today}

\begin{abstract}
For algebraic varieties defined by hyperk\"ahler or, more generally, algebraic symplectic reduction, it is a long-standing question whether the ``hyperk\"ahler Kirwan map'' on cohomology is surjective.  We resolve this question in the affirmative for Nakajima quiver varieties.  We also establish similar results for other cohomology theories and for the derived category.  Our proofs use only classical topological and geometric arguments.  
\end{abstract}

\maketitle

\section{Introduction}
Suppose $M$ is a complex algebraic variety with the action of a complex algebraic group $G$, yielding a quotient stack/equivariant space $\resol = M/G$; or more generally $\resol$ is any complex algebraic stack.  Often $\resol$ has one or more natural open sets $\resol^{\on{ss}}$---typically defined via geometric invariant theory (GIT)---that are smooth algebraic varieties; thus, when $\resol = M/G$, we have $\resol^{\on{s}} = M^{\on{ss}}/G$ where $G$ acts freely on $M^{\on{ss}}$.  Fixing such an open subset $i: \resol^{\on{ss}}\hookrightarrow \resol$, one has the following problem.
\begin{hkks}
When is the pullback map
\begin{equation}\label{hkks-eq}
H^*(\resol) \xrightarrow{i^*} H^*(\resol^{\on{ss}})
\end{equation}
surjective?
\end{hkks}
\begin{convention} 
Except when noted otherwise, $H^*$ means singular cohomology with ${\mathbb Z}$ coefficients.
\end{convention}
\noindent
When $\resol$ itself is smooth and $\resol^{\on{ss}}$ is defined by GIT, classical Morse-theoretic results of Atiyah-Bott and Kirwan show that the ``Kirwan map'' \eqref{hkks-eq} is surjective.  
Significant recent attention focuses on the case when $\resol$ is a singular, but algebraic symplectic or even hyperk\"ahler, stack: typically, letting $Z$ be a smooth $G$-variety with algebraic moment map $\mu: T^*Z\rightarrow \mathfrak{g}^* = \on{Lie}(G)^*$, we have $\resol  = \mu\inv(0)/G$ and $\resol^{\on{ss}} = \mu\inv(0)^{\on{ss}}/G$ for a choice of GIT stability.

\vspace{.5em}

This paper resolves the Kirwan Surjectivity Problem when $\resol^{\on{ss}}$ is a Nakajima quiver variety.  

\vspace{.5em}

Thus, let $Q= (I,\Omega)$ be a quiver and $\mathbf v, \mathbf w \in \mathbb{Z}_{\geq 0}^I$ vectors with $\mathbf{w}\neq \mathbf 0$. 
 Following Nakajima \cite{Nakajima Duke, NakajimaJAMS}, these data yield (notation as in Section \ref{sec:basicsofquivers}):
\begin{enumerate}
\item a finite-dimensional complex vector space $\mathbb{M} = \mathbb{M}({\mathbf v},{\mathbf w})$, with
\item the linear action of the complex group $\mathbb{G} = \prod_i GL_{v_i}$, and
\item a (complex) moment map $\mu: \mathbb{M} \longrightarrow \on{Lie}(\mathbb{G})^*$.
\end{enumerate}
Fix a nondegenerate stability condition $\theta$ (Definition \ref{nondegen}) in the sense of GIT---for example the one used in \cite{Nakajima Duke, NakajimaJAMS}---with stable locus $\mu\inv(0)^{\on{ss}} = \mu\inv(0)^s \subset \mathbb{M}^s$.  The $\mathbb{G}$-action on $\mathbb{M}^s$ is free, and the quotient $\Ms = \Ms(\mathbf v, \mathbf w) := \mu\inv(0)^s/\mathbb{G}$ is the Nakajima quiver variety associated to $Q, \mathbf{v}, \mathbf{w}, \theta$.
\begin{thm}\label{main thm}
Let $\Ms(\mathbf v, \mathbf w)$ be a smooth Nakajima quiver variety. 
Then the Kirwan map 
\bd
H^*_{\mathbb{G}}(\on{pt}) \cong H^*_{\mathbb{G}}(\mu\inv(0))\longrightarrow H^*_{\mathbb{G}}(\mu\inv(0)^s) = H^*\big(\Ms(\mathbf v, \mathbf w)\big)
\ed
is surjective.  Thus, $H^*\big(\Ms(\mathbf v, \mathbf w)\big)$ is generated by tautological classes.
\end{thm}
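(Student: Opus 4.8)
The plan is to run the Morse-theoretic argument of Atiyah--Bott and Kirwan --- which gives surjectivity of the Kirwan map when the ambient space is smooth --- in the present situation, where the natural ambient space $\mu\inv(0)$ is singular. Recall that $\mathbb{M} = \mathbb{M}(\mathbf v,\mathbf w) = T^*\mathbb{N}$, with $\mathbb{N}$ the representation space of the framed quiver $Q$ (arrows of $Q$ together with the framing maps), and let $\cs$ act by scaling the cotangent fibres of $T^*\mathbb{N}$. This $\cs$-action commutes with $\mathbb{G}$ and scales $\mu$ with weight one, so it preserves $\mu\inv(0)$, contracts it onto the zero section $\mathbb{N}$, and contracts $\mathbb{N}$ onto the origin; since $\mathbb{G}$ fixes the origin, this is the $\mathbb{G}$-equivariant deformation retraction behind the identification $H^*_{\mathbb{G}}(\on{pt}) \cong H^*_{\mathbb{G}}(\mu\inv(0))$ in the statement. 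Hence surjectivity of the Kirwan map is \emph{equivalent} to surjectivity of the restriction $H^*_{\mathbb{G}}(\mu\inv(0)) \to H^*_{\mathbb{G}}(\mu\inv(0)^s)$ along the open inclusion $\mu\inv(0)^s \hookrightarrow \mu\inv(0)$.

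Next I would stratify $\mu\inv(0)$ by intersecting it with the Kirwan--Hesselink stratification of $\mathbb{M}$ by instability type (equivalently, the Morse stratification of $\|\mu_{\mathbb R}\|^2$ for a compact form $K\subset\mathbb{G}$): one obtains finitely many $\mathbb{G}$- and $\cs$-stable locally closed pieces $\mu\inv(0)_\beta$, indexed by rational points $\beta$ of a closed Weyl chamber, with open stratum $\mu\inv(0)_0 = \mu\inv(0)^s$, each other stratum ($\beta\neq0$) of positive codimension and of the form $\mathbb{G}\times_{P_\beta}Y_\beta$ for a parabolic $P_\beta$, so that $H^*_{\mathbb{G}}(\mu\inv(0)_\beta) \cong H^*_{L_\beta}(Y_\beta)$ for the Levi $L_\beta$, with $Y_\beta$ built from strictly smaller quiver-type data carrying its own scaling $\cs$-action. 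Peeling off strata from the most unstable inward, via the long exact sequences of the pairs $\big(\mu\inv(0)_{\ge\beta},\,\mu\inv(0)_{>\beta}\big)$, the desired surjectivity becomes equivalent to the vanishing of all connecting homomorphisms, i.e.\ to \emph{equivariant perfection} of the stratification. In the smooth setting this is Kirwan's self-intersection argument: each stratum carries an equivariant normal bundle whose equivariant Euler class is not a zero-divisor, so the maps out of the relative Thom--Gysin terms are injective and the sequences split.

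The crux is that $\mu\inv(0)$ and its strata are singular, so there is no normal bundle and the relative groups $H^*_{\mathbb{G}}\big(\mu\inv(0)_{\ge\beta},\,\mu\inv(0)_{>\beta}\big)$ are equivariant local cohomology along a singular subvariety. To replace the Euler-class argument I would use a degree-parity argument driven by the scaling $\cs$-action: Nakajima quiver varieties are semiprojective --- the induced $\cs$-action contracts $\Ms(\mathbf v,\mathbf w)$, via its projective morphism to the affinization $\mu\inv(0)/\!\!/\mathbb{G}$, onto the compact core --- so $H^*\!\big(\Ms(\mathbf v,\mathbf w)\big)$ is torsion-free and concentrated in even degrees, and, inducting on the stratum poset and on the size of the quiver data via the Levi description above, the same holds for the equivariant cohomology of every stratum and of every open union of strata. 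If it holds for the \emph{relative} terms as well, then in each long exact sequence all odd-degree groups vanish, so it degenerates into short exact sequences $0 \to H^{2k}_{\mathbb{G}}(\mu\inv(0)_{\ge\beta},\mu\inv(0)_{>\beta}) \to H^{2k}_{\mathbb{G}}(\mu\inv(0)_{\ge\beta}) \to H^{2k}_{\mathbb{G}}(\mu\inv(0)_{>\beta}) \to 0$; all connecting maps vanish, and equivariant perfection --- hence the theorem --- follows. The torsion-freeness and even-degree concentration of the relative terms is the step I expect to be the main obstacle: it requires controlling the singularities of $\mu\inv(0)$ transverse to each unstable stratum, and here the specific structure of Nakajima data should be decisive --- one wants a local model near $\mu\inv(0)_\beta$ in which $\mu\inv(0)$ looks like a linear space times a strictly smaller quiver-variety-type situation with its own contracting $\cs$-action and complex-affine Kempf--Ness fibres, so that the same semiprojectivity input applies one step down; the framing hypothesis $\mathbf w\neq\mathbf 0$ and Nakajima's smoothness criterion for $\Ms(\mathbf v,\mathbf w)$ should feed into making this local model well-behaved (e.g.\ $\mu$ flat, the strata local complete intersections). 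Granting this, the last sentence of the theorem is immediate: $H^*_{\mathbb{G}}(\on{pt}) = H^*\!\big(\prod_i BGL_{v_i}\big)$ is generated by the Chern classes of the standard representations, which map under the Kirwan map to the Chern classes of the tautological bundles on $\Ms(\mathbf v,\mathbf w)$ --- the tautological classes.
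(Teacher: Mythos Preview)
Your proposal outlines the standard stratification approach to hyperk\"ahler Kirwan surjectivity, and you correctly isolate its central obstacle: controlling the relative terms $H^*_{\mathbb{G}}\big(\mu\inv(0)_{\geq\beta},\mu\inv(0)_{>\beta}\big)$ along the singular strata. But this obstacle is not overcome in your argument---it is precisely the difficulty that has kept the hyperk\"ahler Kirwan problem open in general. Even granting, by induction on dimension vector, that each stratum $\mu\inv(0)_\beta$ has even, torsion-free equivariant cohomology, the \emph{relative} groups are local cohomology of the singular scheme $\mu\inv(0)$ along that stratum, not a shift of the stratum's own cohomology; the Thom--Gysin identification you need requires a normal bundle, which does not exist here. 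Your hoped-for local model (``$\mu\inv(0)$ looks like a linear space times a strictly smaller quiver-variety-type situation'') would amount to $\mu\inv(0)$ being a vector bundle over its intersection with each Kirwan stratum, and this is simply false: $\mu\inv(0)$ can fail to be reduced, irreducible, or equidimensional, and the transverse structure along an unstable stratum is not in general governed by a smaller Nakajima moment map. The appeals to flatness of $\mu$, local-complete-intersection strata, and ``semiprojectivity one level down'' are not justified and do not obviously hold. As written, then, this is an outline with its decisive step left as a hope, not a proof.

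The paper's argument avoids the stratification of $\mu\inv(0)$ entirely. It builds an explicit \emph{projective} compactification $\Ms\hookrightarrow\overline{\Ms}$ as a moduli space of representations of an auxiliary ``graded tripled'' quiver with relations, and constructs on $\Ms\times\overline{\Ms}$ a three-term complex $R$ of external tensor products of tautological bundles whose middle cohomology $\mathcal{H}$ is a vector bundle of rank $\dim\Ms$, together with a section of $\mathcal{H}$ whose scheme-theoretic zero locus is the graph $\Gamma$ of the inclusion. Then $[\Gamma]=c_{\dim\Ms}(\mathcal{H})$ is a polynomial, with integer coefficients, in the Chern classes of tautological bundles on the two factors; a projection-formula computation shows that the image of restriction $H^*(\overline{\Ms})\to H^*(\Ms)$ lies in the span of the left-hand K\"unneth components of $[\Gamma]$, hence in the subring generated by tautological Chern classes. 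Finally, since $H^*(\Ms,\mathbb{Z})$ is generated by algebraic cycles (Nakajima), restriction from \emph{any} projective compactification is already surjective. This is a Markman/Nakajima-style diagonal argument; the new ingredient is the modular compactification, and no analysis of the singularities of $\mu\inv(0)$ is required.
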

\noindent
We note that $H^*_{\mathbb{G}}(\on{pt})$ is a polynomial ring (in the tautological classes of the theorem).  Theorem \ref{main thm} extends to many other cohomology theories, including complex $K$-theory and elliptic cohomology.\ff{We have in mind Grojnowski's equivariant elliptic cohomology \cite{Grojnowski}, since it seems to be the only theory currently documented; though the same arguments apply to any theory with standard formal properties.}
\begin{thm}\label{complex-oriented}
Assume that  $E^*(\on{pt})$ is concentrated in even degrees.
\begin{enumerate}
\item The map $E^*(B\mathbb{G})\rightarrow   E^*(\Ms)$ is surjective.  
\item If $E$ is complex-oriented,  then $E^*(\Ms)$ is generated as an $E^*(\on{pt})$-algebra by Chern classes of tautological bundles.  
\end{enumerate}
\end{thm}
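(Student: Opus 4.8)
The plan is to deduce Theorem~\ref{complex-oriented} from Theorem~\ref{main thm} by comparing Atiyah--Hirzebruch spectral sequences, together with the projective bundle formula computing $E^*(B\mathbb{G})$ for complex-oriented $E$. The one ingredient not formally contained in Theorem~\ref{main thm} is that $H^*(\Ms(\mathbf v,\mathbf w);{\mathbb Z})$ is a finitely generated \emph{free} abelian group concentrated in even degrees. Concentration in even degrees is immediate from Theorem~\ref{main thm}: $H^*(\Ms)$ is then a graded quotient ring of $H^*_{\mathbb{G}}(\on{pt})=H^*(B\mathbb{G})$, a polynomial ring on generators of positive even degree; and finite generation is clear since $\Ms$ is a finite-dimensional manifold. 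Torsion-freeness does \emph{not} follow from surjectivity of the Kirwan map alone, and I would extract it from the proof of Theorem~\ref{main thm}: that argument realizes $H^*_{\mathbb{G}}(\mu\inv(0))$ and $H^*(\Ms)$ as iterated extensions of free $H^*_{\mathbb{G}}(\on{pt})$-modules, via a Morse/Kirwan stratification whose Gysin sequences split, hence yields torsion-free integral cohomology; I would record this freeness as a separate statement. The same facts hold for $B\mathbb{G}=\prod_i BGL_{v_i}$, a product of copies of $BU(v_i)$.

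For part~(1), consider the classifying map $f\colon \Ms=\mu\inv(0)^s\times_{\mathbb{G}}E\mathbb{G}\to B\mathbb{G}$ of the principal $\mathbb{G}$-bundle $\mu\inv(0)^s\to\Ms$; on $E$-cohomology $f^*$ is the map of the theorem. Compare the Atiyah--Hirzebruch spectral sequences of source and target. For each of $B\mathbb{G}$ and $\Ms$ the integral cohomology is free and even, and $E^q(\on{pt})=0$ for $q$ odd, so by the universal coefficient theorem the $E_2$-page is $H^p(-;{\mathbb Z})\otimes_{\mathbb Z}E^q(\on{pt})$, concentrated in even total degree; both spectral sequences therefore collapse at $E_2$, and the associated graded of $E^*(-)$ is $H^*(-;{\mathbb Z})\otimes_{\mathbb Z}E^*(\on{pt})$. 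By Theorem~\ref{main thm} and right-exactness of $\otimes$, the induced map on associated graded modules $H^*(B\mathbb{G};{\mathbb Z})\otimes E^*(\on{pt})\to H^*(\Ms;{\mathbb Z})\otimes E^*(\on{pt})$ is surjective. A filtration-preserving ring map that is surjective on associated graded modules is surjective whenever the target filtration is bounded---which it is, $\Ms$ being finite-dimensional---so $f^*$ is surjective. (For $B\mathbb{G}$ one works with finite-dimensional approximations; the relevant $\varprojlim^{1}$ vanishes because the transition maps are surjective.)

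For part~(2), assume in addition that $E$ is complex-oriented. The orientation gives the projective bundle formula, whence $E^*(BGL_n({\mathbb C}))=E^*(\on{pt})[[c_1,\dots,c_n]]$ with $c_j$ the Conner--Floyd Chern classes of the universal bundle; hence $E^*(B\mathbb{G})=E^*(\on{pt})[[\,c^{(i)}_j:i\in I,\ 1\le j\le v_i\,]]$ is topologically generated over $E^*(\on{pt})$ by these Chern classes. Under the surjection $f^*$ of part~(1) they map to the Chern classes $c_j(\cV_i)$ of the tautological bundles $\cV_i=\mu\inv(0)^s\times_{\mathbb{G}}{\mathbb C}^{v_i}$, where $\mathbb{G}$ acts through its $i$-th factor. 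Each $c_j(\cV_i)$ lies in positive Atiyah--Hirzebruch filtration, and $\Ms$ has finite cohomological dimension, so sufficiently high-order products of the $c_j(\cV_i)$ vanish; thus every power series in them reduces on $\Ms$ to a polynomial, and the image of $E^*(\on{pt})[[c]]$ equals that of the honest polynomial subalgebra $E^*(\on{pt})[c]$. By part~(1) this image is all of $E^*(\Ms)$, so $E^*(\Ms)$ is generated as an $E^*(\on{pt})$-algebra by the Chern classes of the tautological bundles.

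The one genuinely non-formal point is the torsion-freeness of $H^*(\Ms;{\mathbb Z})$: without it the statement fails already for $E=H{\mathbb Z}/p$, so the crux is to see that the stratification proving Theorem~\ref{main thm} is ``perfect over ${\mathbb Z}$'', not merely that the integral Kirwan map is surjective. Granting that, everything else---the Atiyah--Hirzebruch collapse, the splitting principle, and the completeness bookkeeping for $B\mathbb{G}$---is routine.
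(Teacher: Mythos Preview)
Your argument for Part~(1) via Atiyah--Hirzebruch collapse is exactly the paper's. However, your proposed source for the torsion-freeness of $H^*(\Ms;\mathbb{Z})$ is misidentified: the paper's proof of Theorem~\ref{main thm} is \emph{not} a Morse/Kirwan stratification argument, but a compactification argument---one builds a projective $\overline{\Ms}\supset\Ms$ and resolves the class of the graph of the inclusion by external tensor products of tautological bundles. That argument does not yield torsion-freeness as a byproduct; indeed, it already \emph{consumes} (via Remark~\ref{quiver remark} and Corollary~\ref{cohomology generation}) Nakajima's result that $H^{\on{BM}}_*(\Ms;\mathbb{Z})$ is generated by algebraic cycles, which is what gives freeness and even concentration. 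The fix is simply to cite Theorem~7.3.5 of \cite{NakajimaJAMS} directly, as the paper does, rather than to try to extract freeness from the proof of Theorem~\ref{main thm}.

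For Part~(2) your route genuinely differs from the paper's. The paper re-runs the compactification/graph argument inside complex-oriented $E$: it checks that Gysin maps, the identity $[\Gamma]=c_d(R)$, and the polynomial dependence of the Chern classes of $R$ on those of its tautological constituents all hold in $E$, and then factors $\Ms\to B\mathbb{G}$ through $\overline{\Ms}$ to obtain surjectivity of $E^*(\overline{\Ms})\to E^*(\Ms)$. Your argument is more formal: you deduce (2) directly from (1) using only the projective-bundle computation $E^*(B\mathbb{G})\cong E^*(\on{pt})[[c^{(i)}_j]]$ and the finite cohomological dimension of $\Ms$ to truncate power series to polynomials. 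This is cleaner and avoids rechecking any geometry in $E$; the paper's approach, by contrast, keeps the mechanism (the resolution of the graph) visible in $E$ and does not appeal to the structure of $E^*(BGL_n)$.
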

\begin{corollary}
The natural maps $K_{\mathbb{G}}^*(\on{pt})\rightarrow K^*(\Ms)$ and $\on{Ell}_{\mathbb{G}}^*(\on{pt}) \rightarrow \on{Ell}^*(\Ms)$ are surjective.
\end{corollary}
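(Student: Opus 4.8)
The plan is to deduce the corollary from Theorem~\ref{complex-oriented}(2), applied to complex $K$-theory ($E = K$) and to Grojnowski's elliptic cohomology ($E = \on{Ell}$), together with the observation that the generating Chern classes produced by that theorem lift to the genuine equivariant cohomology $E^*_{\mathbb{G}}(\on{pt})$ of a point. This last point is what upgrades the statement about $E^*(B\mathbb{G})$ in Theorem~\ref{complex-oriented}(1) to the asserted statement about $K^*_{\mathbb{G}}(\on{pt})$ and $\on{Ell}^*_{\mathbb{G}}(\on{pt})$.

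First I would check that Theorem~\ref{complex-oriented} applies to both theories: $K^*(\on{pt})$ and $\on{Ell}^*(\on{pt})$ are each concentrated in even degrees, and $K$ and $\on{Ell}$ are both complex-oriented (for $\on{Ell}$, the formal group law is the formal completion of the relevant elliptic curve). Part~(2) of the theorem then gives that $K^*(\Ms)$ is generated as a $K^*(\on{pt})$-algebra, and $\on{Ell}^*(\Ms)$ as an $\on{Ell}^*(\on{pt})$-algebra, by the Chern classes of the tautological bundles $\cV_i = \mu\inv(0)^s \times_{\mathbb{G}} \C^{v_i}$ on $\Ms$.

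The main point is then that each $\cV_i$ is the bundle associated, via the free $\mathbb{G}$-quotient $\mu\inv(0)^s \to \Ms$, to the standard representation $\C^{v_i}$ of the factor $GL_{v_i}$ of $\mathbb{G}$. Hence the Chern classes of $\cV_i$ (K-theoretic or elliptic) are the images, under the restriction map $E^*_{\mathbb{G}}(\on{pt}) \to E^*_{\mathbb{G}}(\mu\inv(0)^s) \cong E^*(\Ms)$ --- the isomorphism because $\mathbb{G}$ acts freely on $\mu\inv(0)^s$ --- of the corresponding equivariant Chern classes of $\C^{v_i}$, which lie in $E^*_{\mathbb{G}}(\on{pt})$ (for $E = K$ these are just the classes $[\Lambda^k\C^{v_i}]\in R(\mathbb{G}) = K^0_{\mathbb{G}}(\on{pt})$). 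Since this restriction map is a homomorphism of $E^*(\on{pt})$-algebras, its image is an $E^*(\on{pt})$-subalgebra of $E^*(\Ms)$ containing all the tautological Chern classes; by the previous paragraph that subalgebra is all of $E^*(\Ms)$. Specializing to $E = K$ and $E = \on{Ell}$ yields the two asserted surjections.

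The step demanding the most attention is the elliptic-cohomology bookkeeping: one must invoke that Grojnowski's theory carries the expected Euler/Chern classes for associated bundles, restricts correctly along the free locus $\mu\inv(0)^s$, and that the resulting classes agree with those coming from the complex orientation on $\on{Ell}^*(\Ms)$ used in Theorem~\ref{complex-oriented}. These are all manifestations of the standard formal properties of the construction, but they are where care is needed; for $K$-theory the analogous statements --- in particular the isomorphism $K^*_{\mathbb{G}}(\mu\inv(0)^s) \cong K^*(\Ms)$ for free actions --- are entirely routine, so no real obstacle arises there.
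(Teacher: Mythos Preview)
Your proof is correct and is the natural expansion of what the paper leaves implicit: the corollary is stated without proof, as an immediate consequence of Theorem~\ref{complex-oriented}. Your choice to argue via part~(2) rather than part~(1) is well-placed, since $E^*(B\mathbb{G})$ (the Borel theory) is not literally $E^*_{\mathbb{G}}(\on{pt})$ for $K$ or $\on{Ell}$; it is exactly the observation that the tautological Chern classes lift to the genuine equivariant theory---because the tautological bundles are associated to $\mathbb{G}$-representations via the free quotient $\mu^{-1}(0)^s\to\Ms$---that closes this gap.
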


Furthermore, if $\mathbb{T}$ is an algebraic torus acting on $\Ms$, then, because $H^*(\Ms)$ is evenly graded, the Leray spectral sequence for $H^*_{\mathbb{T}}(\Ms)$ degenerates, showing that $H^*(\Ms) = H^*_{\mathbb{T}}(\Ms) \otimes_{H^*_{\mathbb{T}}(\on{pt})} H^*(\on{pt})$.  Via Theorem \ref{main thm} and the Nakayama Lemma for graded rings, we conclude:
\begin{corollary}\label{T-equivariant}
For a torus $\mathbb{T}$ acting on $\Ms$, the map $H^*_{\mathbb{G}\times\mathbb{T}}(\on{pt})\rightarrow H^*_{\mathbb{T}}(\Ms)$ is surjective.
\end{corollary}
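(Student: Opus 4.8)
The plan is to obtain Corollary \ref{T-equivariant} as a formal consequence of Theorem \ref{main thm}, using that $H^*(\Ms)$ is concentrated in even degrees together with the graded Nakayama lemma over the ring $R := H^*_{\mathbb T}(\on{pt}) = H^*(B\mathbb T)$. First I would pin down the maps involved. The $\mathbb T$-action on $\Ms$ comes from an action on $\mathbb M$ commuting with $\mathbb G$ and preserving $\mu\inv(0)$ and the stability condition, so $\Ms = \mu\inv(0)^s/\mathbb G$ carries a residual $\mathbb T$-action with $H^*_{\mathbb T}(\Ms) = H^*_{\mathbb G\times\mathbb T}(\mu\inv(0)^s)$, and the map in the statement is the composite
\[
H^*_{\mathbb G\times\mathbb T}(\on{pt}) = H^*_{\mathbb G\times\mathbb T}(\mathbb M)\longrightarrow H^*_{\mathbb G\times\mathbb T}(\mu\inv(0)^s) = H^*_{\mathbb T}(\Ms),
\]
an $R$-algebra homomorphism ($\mathbb M$ being equivariantly contractible). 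Forgetting the $\mathbb T$-equivariance fits this into a commuting square whose bottom edge is the ordinary Kirwan map $H^*_{\mathbb G}(\on{pt})\to H^*(\Ms)$ of Theorem \ref{main thm}, whose left vertical arrow is restriction along $\mathbb G\hookrightarrow \mathbb G\times\mathbb T$, and whose right vertical arrow is the fiber restriction $H^*_{\mathbb T}(\Ms)\to H^*(\Ms)$.

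The key observation is that applying $-\otimes_R\mathbb Z$ (reduction modulo the augmentation ideal $\mathfrak m\subset R$) to the top row of this square recovers the bottom row. On the source side this is the Künneth identification $H^*_{\mathbb G\times\mathbb T}(\on{pt})\cong H^*_{\mathbb G}(\on{pt})\otimes_{\mathbb Z} R$ (both factors being free, evenly graded polynomial rings), so $H^*_{\mathbb G\times\mathbb T}(\on{pt})\otimes_R\mathbb Z\cong H^*_{\mathbb G}(\on{pt})$. On the target side I would invoke equivariant formality: Theorem \ref{main thm} exhibits $H^*(\Ms)$ as a quotient of the evenly graded ring $H^*_{\mathbb G}(\on{pt})$, so $H^{\on{odd}}(\Ms)=0$; hence the $E_2$-page $H^*(B\mathbb T)\otimes_{\mathbb Z} H^*(\Ms)$ of the Leray--Serre spectral sequence of $\Ms\hookrightarrow E\mathbb T\times_{\mathbb T}\Ms\to B\mathbb T$ sits in even total degree, all differentials vanish, $H^*_{\mathbb T}(\Ms)$ is a free $R$-module, and the edge map induces $H^*_{\mathbb T}(\Ms)\otimes_R\mathbb Z\xrightarrow{\ \sim\ }H^*(\Ms)$.

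Now let $C := \coker\big(H^*_{\mathbb G\times\mathbb T}(\on{pt})\to H^*_{\mathbb T}(\Ms)\big)$, a graded $R$-module bounded below. Applying the right-exact functor $-\otimes_R\mathbb Z$ to the tautological presentation of $C$ and using the two identifications above yields an exact sequence
\[
H^*_{\mathbb G}(\on{pt})\longrightarrow H^*(\Ms)\longrightarrow C\otimes_R\mathbb Z\longrightarrow 0
\]
in which the first arrow is the Kirwan map of Theorem \ref{main thm}, hence surjective; thus $C/\mathfrak m C = C\otimes_R\mathbb Z = 0$. Since $R$ is a connected graded ring and $C$ is bounded below, the graded Nakayama lemma forces $C=0$, which is the assertion.

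I do not expect a genuine obstacle, as the corollary is formal once Theorem \ref{main thm} is in hand; the only care required is in the bookkeeping of the $R$-module structures, in checking that the equivariant and non-equivariant Kirwan maps really do sit in a commuting square compatible with $-\otimes_R\mathbb Z$, and in confirming that the vanishing $H^{\on{odd}}(\Ms)=0$ suffices for equivariant formality --- in particular for the identification $H^*_{\mathbb T}(\Ms)\otimes_R\mathbb Z\cong H^*(\Ms)$, which involves the mild extension-problem check in passing from the degenerate $E_\infty$-page back to $H^*_{\mathbb T}(\Ms)$ itself.
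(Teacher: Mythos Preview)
Your proposal is correct and follows exactly the route the paper sketches in the paragraph preceding the corollary: even-degree cohomology of $\Ms$ forces degeneration of the Leray spectral sequence for the $\mathbb{T}$-action, giving $H^*_{\mathbb{T}}(\Ms)\otimes_R\mathbb{Z}\cong H^*(\Ms)$, after which Theorem~\ref{main thm} combined with the graded Nakayama lemma yields surjectivity. You have supplied more detail than the paper (the commuting square, the K\"unneth identification on the source, the extension problem at $E_\infty$), but the argument is the same.
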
 
In particular, the expectation expressed in Section 2.2.2 of \cite{AO}---that their map (9) is an embedding near the origin of $\mathscr{E}_{\mathsf{T}}$---follows.  We note the applicability of the above results in other, similar contexts (cf. \cite{MO}).  Analogues of Corollary \ref{T-equivariant} can also be proven for $K$-theory and elliptic cohomology equivariant with respect to a torus $\mathbb{T}$ or more general ``flavor symmetries'' of $\Ms$.

Our method also yields the following.  
\begin{thm}\label{derived cat}
Let $D(\Ms)$ denote the unbounded quasicoherent derived category of $\Ms$, and $D_{\on{coh}}(\Ms)$ its bounded coherent subcategory.
\begin{enumerate}
\item The category $D(\Ms)$ is generated by tautological bundles.  
\item There is a finite list of tautological bundles from which every object of  $D_{\on{coh}}(\Ms)$ is obtained by finitely many applications of (i) direct sum, (ii) cohomological shift, and (iii) cone.
\end{enumerate}
\end{thm}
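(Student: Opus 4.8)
The plan is to deduce Theorem \ref{derived cat} from the cohomological generation result (Theorem \ref{main thm} / Theorem \ref{complex-oriented}) together with the geometry of the Kirwan–Ness stratification of $\mathbb{M}$ (or of $\mu\inv(0)$). The key structural input is that $\Ms$ is obtained from the stack $\mathbb{M}/\mathbb{G}$ (or $\mu\inv(0)/\mathbb{G}$) by successively removing GIT-unstable strata, each of which is a vector-bundle stack over a lower-dimensional quotient stack built from the same combinatorial data; this is the setting in which one has a semiorthogonal decomposition (after Halpern-Leistner, Ballard–Favero–Katzarkov, and Teleman's ``quantization commutes with reduction'' window theorem) of $D^b_{\on{coh}}(\mu\inv(0)/\mathbb{G})$ into a ``window'' equivalent to $D^b_{\on{coh}}(\Ms)$ and pieces supported on the unstable strata. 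First I would set up this stratification and the associated window subcategory, so that the restriction functor $D^b_{\on{coh}}(\mu\inv(0)/\mathbb{G}) \to D^b_{\on{coh}}(\Ms)$ is essentially surjective — indeed split surjective on the level of split-generators.

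Second, I would identify a finite generating set upstairs. On the quotient stack $\mathbb{M}/\mathbb{G}$, the derived category $D(\mathbb{M}/\mathbb{G}) = D(\mathbb{G}\text{-equivariant } \cO_{\mathbb{M}}\text{-modules})$ is generated by the representations of $\mathbb{G}$ pulled back along $\mathbb{M}/\mathbb{G} \to B\mathbb{G}$, i.e.\ by tautological bundles; since $\mathbb{G} = \prod_i GL_{v_i}$ is reductive with $\operatorname{Rep}(\mathbb{G})$ generated as a tensor category by the standard representations and their duals, a \emph{finite} list of tautological bundles generates $D^b_{\on{coh}}(\mathbb{M}/\mathbb{G})$ under the operations (i)--(iii) after passing to the subcategory of perfect complexes, and $\mathbb{M}/\mathbb{G}$ is a smooth (Artin, finite-type) quotient stack so perfect $=$ coherent-bounded. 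Restricting to $\mu\inv(0)$ uses the Koszul resolution of $\cO_{\mu\inv(0)}$ over $\cO_{\mathbb{M}}$ (the components of $\mu$ form a regular sequence on the relevant locus, by smoothness of $\Ms$ and dimension count), which is finite, so $D^b_{\on{coh}}(\mu\inv(0)/\mathbb{G})$ is again generated under (i)--(iii) by finitely many tautological bundles. Applying the window/restriction functor then transports this finite list to a finite generating list on $\Ms$, giving part (ii); part (i) for the unbounded $D(\Ms)$ follows because a set of compact generators of a compactly generated triangulated category generates the whole category under arbitrary coproducts, shifts, and cones, and $\Ms$ smooth means compact $=$ perfect $=$ $D^b_{\on{coh}}$.

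An alternative, more self-contained route — and probably the one consistent with the paper's stated philosophy of ``classical topological and geometric arguments'' — is to avoid the window machinery and instead argue directly: build a resolution of the diagonal of $\Ms$ out of tautological bundles. One shows that $\Ms \times \Ms$, or rather the relevant fiber product over $B\mathbb{G}\times B\mathbb{G}$, carries a Koszul-type complex resolving the structure sheaf of the diagonal whose terms are external tensor products of tautological bundles; convolving with this complex expresses the identity functor on $D^b_{\on{coh}}(\Ms)$ as a finite iterated cone of functors of the form $M \mapsto (\text{tautological bundle}) \otimes R\Gamma(\Ms, M \otimes \text{tautological bundle})$, which manifestly land in the triangulated category generated by tautological bundles. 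This is the technique used for quiver varieties in the work of (e.g.) Kaledin and others on the derived McKay correspondence and for Nakajima varieties in low rank.

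The main obstacle is controlling the passage from the stack $\mu\inv(0)/\mathbb{G}$ (or $\mathbb{M}/\mathbb{G}$) down to the open GIT-stable locus $\Ms$: one must ensure that restriction along $\Ms \hookrightarrow \mu\inv(0)/\mathbb{G}$ is essentially surjective on derived categories, not merely that it is surjective on (some) cohomology. For the \emph{unbounded} category $D(\Ms)$ this is relatively soft — $j^*$ for an open immersion $j$ is always essentially surjective and has fully faithful right adjoint $j_*$, so generators restrict to generators — but to get the \emph{finite} list in part (ii) one needs the window theorem (or the explicit diagonal resolution) to guarantee that the restriction of the finite upstairs generating set still generates $D^b_{\on{coh}}(\Ms)$ under \emph{finitely many} operations, uniformly. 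Verifying the required ``quasi-amplitude'' / magnitude-of-weights estimate that makes a \emph{fixed finite} set of tautological bundles lie in the window — equivalently, bounding the Hilbert–Mumford weights along all unstable strata in terms of the fixed highest weights — is the technical heart, and is exactly where the combinatorics of the quiver ($\mathbf v$, $\mathbf w$, and the chosen stability $\theta$) must be used; it runs parallel to, and can be extracted from, the Morse-theoretic estimates already invoked for Theorem \ref{main thm}.
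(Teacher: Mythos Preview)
The paper does \emph{not} use window categories or the Kirwan--Ness stratification; it takes essentially your ``alternative route,'' but with a crucial twist you have missed. The argument resolves not the diagonal in $\Ms\times\Ms$ but the graph $\Gamma$ of the open immersion $\Ms\hookrightarrow\overline{\Ms}$ inside $\Ms\times\overline{\Ms}$, where $\overline{\Ms}$ is the projective compactification built from the graded-tripled quiver. The vector bundle $\mathcal{H}=\mathcal{H}^0(R)$ of Theorem~\ref{resolution-prop} carries a section cutting out $\Gamma$, so its Koszul complex resolves $\mathcal{O}_\Gamma$; each $\bigwedge^k\mathcal{H}\simeq\bigwedge^k R$ is then written (via Berele--Regev / Eisenbud--Weyman) as an iterated cone on external tensor products $S_\lambda(\cE_j^\ell)\boxtimes S_\mu(\cF_{j'}^{\ell'})$. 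For any $\cG\in D_{\on{coh}}(\overline{\Ms})$ one computes $\cG|_\Ms \simeq \mathbb{R}(p_\Ms)_*(\mathcal{O}_\Gamma\otimes p_{\overline{\Ms}}^*\cG)$; because $\overline{\Ms}$ is \emph{projective}, the coefficients $R\Gamma(\overline{\Ms}, S_\mu(\cF)\otimes\cG)$ are bounded complexes of \emph{finite-dimensional} vector spaces, so each term is a genuine finite direct sum of shifts of the $S_\lambda(\cE_j^\ell)^\vee$. Essential surjectivity of restriction $D_{\on{coh}}(\overline{\Ms})\to D_{\on{coh}}(\Ms)$ finishes. Note that Theorem~\ref{derived cat} is not deduced from Theorem~\ref{main thm}; rather, both are parallel consequences of the same complex $R$.

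Your version of the diagonal-resolution route, run on $\Ms\times\Ms$, has a real gap for part~(2): since $\Ms$ is typically non-proper, $R\Gamma(\Ms, M\otimes\text{tautological})$ is in general infinite-dimensional, and an infinite-dimensional vector space tensored with a tautological bundle is \emph{not} reachable from a finite list under finitely many applications of (i)--(iii)---recall the paper explicitly excludes retracts and infinite sums. The compactification is exactly what makes this finite; it is not optional. Your window approach is a legitimate alternative (indeed, this is what \cite{HL} does, as the paper notes), but it is genuinely different machinery: what the paper's method buys is a completely explicit, elementary argument that reuses the complex $R$ already constructed for Theorem~\ref{main thm} and directly produces the finite list of generators (Schur functors of the $\cE_j^\ell$), with no weight-window estimates to verify.
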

We note that the second assertion of Theorem \ref{derived cat} is {\em not} simply a formal consequence of the first, since we do {\em not} include taking direct summands (i.e., retracts) among the operations (i)-(iii).  Results related to Theorem \ref{derived cat} appear in \cite{HL}.

We mention one further application of Theorem \ref{main thm} (that will be readily apparent to experts).
\begin{corollary}[Assumption 5.13 of \cite{BDMN}]
Let $\mathfrak{g} = \on{Lie}(\mathbb{G})$, and $Z:= Z(\mathfrak{g})^*\subset \mathfrak{g}^*$ denote the dual of the center.  Consider the family $\mathfrak{M} = \mu\inv(Z)^{\on{ss}}/\mathbb{G}\longrightarrow Z$ of Hamiltonian reductions.  Then the Duistermaat-Heckman map for this family is surjective.  In particular, the family of Hamiltonian reductions $\mathfrak{M}\rightarrow Z$ provides a versal Poisson deformation of the Nakajima quiver variety $\Ms$.
\end{corollary}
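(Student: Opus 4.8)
The plan is to reduce the corollary to Theorem~\ref{main thm} in cohomological degree two, combined with Namikawa's deformation theory of conical symplectic resolutions. First I would recall the meaning of the Duistermaat--Heckman map here. Restricting $\mu$ to the semistable locus $\mu\inv(Z)^{\on{ss}}$---on which $\mathbb{G}$ acts freely (nondegeneracy of $\theta$) and $\mu$ is a submersion onto $Z$---one gets a smooth morphism $\mathfrak{M}\to Z$ with central fibre $\Ms$, carrying the Poisson structure obtained by reduction from $\mathbb{M}$ and having the fibres $\mu\inv(\zeta)^{\on{ss}}/\mathbb{G}$ as symplectic leaves; this is the standard family of Hamiltonian reductions at central levels from \cite{BDMN}. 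Since all fibres of $\mathfrak{M}\to Z$ are diffeomorphic to $\Ms$, the classes of the fibrewise holomorphic symplectic forms assemble into a period map $Z\to H^2(\Ms;\mathbb{C})$, and the Duistermaat--Heckman map $\mathsf{DH}\colon Z\to H^2(\Ms;\mathbb{C})$ is by definition its derivative at $0$; equivalently, $\mathsf{DH}$ is the Kodaira--Spencer map of $\mathfrak{M}\to Z$ regarded as a Poisson deformation of $\Ms$.

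Next I would compute $\mathsf{DH}$ explicitly. Writing $\fg=\bigoplus_{i\in I}\gl_{v_i}$, the trace form identifies $Z=Z(\fg)^*$ with $\mathbb{C}^I$, the $i$-th basis vector $e_i$ being the differential of the determinant character $\chi_i$ of the $i$-th factor $GL_{v_i}$ of $\mathbb{G}$. The classical Duistermaat--Heckman computation for a moving central level identifies $\mathsf{DH}(e_i)$ with $c_1(L_{\chi_i})$, where $L_{\chi_i}=\mu\inv(0)^s\times_{\mathbb{G},\chi_i}\mathbb{C}$ is the line bundle on $\Ms$ associated with $\chi_i$. But $L_{\chi_i}=\det\cV_i$, where $\cV_i$ is the $i$-th tautological bundle on $\Ms$, so $\mathsf{DH}$ is, under the identification above, the linear map $\mathbb{C}^I\to H^2(\Ms;\mathbb{C})$ sending $e_i$ to $c_1(\cV_i)$.

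It now suffices to observe that this last map is surjective, and this is exactly Theorem~\ref{main thm} in cohomological degree two: the Kirwan map $H^*(B\mathbb{G})=H^*_{\mathbb{G}}(\on{pt})\to H^*(\Ms)$ is surjective, and $H^2(B\mathbb{G};\mathbb{Q})$ is freely generated by the first Chern classes of the standard representations of the factors $GL_{v_i}$, whose images are the $c_1(\cV_i)$. Hence $\mathsf{DH}$ is surjective; this is Assumption~5.13 of \cite{BDMN}. For the final assertion, $\Ms$ is smooth and holomorphic symplectic and carries a conical $\mathbb{C}^*$-action contracting it to its affinization, so Namikawa's theory of Poisson deformations applies: the functor of graded Poisson deformations of $\Ms$ is unobstructed and is pro-represented by a smooth base with tangent space $H^2(\Ms;\mathbb{C})$, the universal family is algebraic, and a Poisson deformation of $\Ms$ is versal precisely when its Kodaira--Spencer map is surjective. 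By the previous paragraphs the Kodaira--Spencer map of $\mathfrak{M}\to Z$ is the surjective map $\mathsf{DH}$, so $\mathfrak{M}\to Z$ is a versal Poisson deformation of $\Ms$.

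The only step that requires real care is the explicit identification of $\mathsf{DH}$ with the tautological first Chern classes---the sign and normalization conventions in the Duistermaat--Heckman computation, and the matching of the characters $\chi_i$ with the bundles $\det\cV_i$. The remaining ingredients---smoothness and flatness of $\mathfrak{M}\to Z$, the reduced Poisson structure with the fibres as symplectic leaves, and the relevant parts of Namikawa's deformation theory---are standard, and the essential new input, surjectivity of the degree-two Kirwan map, is precisely Theorem~\ref{main thm}.
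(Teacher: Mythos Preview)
Your argument is correct and is precisely the sort of derivation the paper has in mind: the corollary is stated without proof, introduced only as ``readily apparent to experts.'' The reduction to Theorem~\ref{main thm} in degree two via the identification of the Duistermaat--Heckman map with $e_i\mapsto c_1(\cV_i)$, followed by Namikawa's theory for the versality assertion, is exactly the intended route; there is nothing to compare against.
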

\noindent

   Cases of Kirwan surjectivity for quivers of finite and affine Dynkin type, and for star-shaped quivers, have previously been established 
(see  \cite{Vasserot, Webster, KoderaNaoi, SVV, FisherRayan}) by different techniques; and for moduli of $GL_n$-Higgs bundles by Markman \cite{Markman}.

Here is a sketch of the strategy used to prove Theorem \ref{main thm}.  
\begin{enumerate}
\item We compactify $\Ms$ to a projective variety $\overline{\Ms}$ by an explicit quiver construction.
\item We identify the class of the graph $\Gamma$ of the inclusion $i: \Ms\hookrightarrow\overline{\Ms}$ 
in $H^*(\Ms\times\overline{\Ms})$
as a Chern class of a complex built from external tensor products of tautological bundles on $\Ms\times\overline{\Ms}$.
\item Purely topological arguments allow us to conclude (Section \ref{sec:tools}) that the Chern classes of the tautological bundles generate the cohomology of $\Ms$.
\end{enumerate}
We emphasize that the overall strategy is not new: see \cite{Nakajima Duke} (and \cite{Markman}).  The new ingredient here is the particular choice of {\em modular} compactification $\overline{\Ms}$.  
Hiding behind our approach to Theorem \ref{main thm} and our other results is, in fact, a general pattern (that experts may already discern here) for moduli spaces in noncommutative geometry---that is, moduli of objects in certain categories.  
The general story
 will be worked out in a forthcoming paper \cite{McNkirwan}.
Nonetheless, it seemed desirable to us to present the results for quiver varieties separately.  Indeed, on the one hand, the proof of Theorem \ref{main thm} can be made completely classical and explicit for quivers, in a way that avoids any categorical yoga or abstraction (and thus will be of independent interest to some readers).  On the other hand, we also obtain sharper results for quiver varieties than seem to be  easily achievable in a completely general context.      

\begin{convention}
Throughout the paper, all varieties, groups, etc. are defined over $\mathbb{C}$.
\end{convention}

\subsection*{Acknowledgments}
We thank G. Bellamy for comments on a draft.  The first author was supported by EPSRC programme grant EI/I033343/1.  The second author was supported by NSF grants DMS-1159468 and DMS-1502125.

\section{Topology of Compactifications}\label{sec:tools}
Throughout the paper, we use $H^*(X)$, with no further decorations indicating coefficients, to denote cohomology with $\mathbb{Z}$-coefficients, and $H^{\on{BM}}_*(X)$ to denote Borel-Moore homology with $\mathbb{Z}$-coefficients; if $X$ is smooth, there is a canonical isomorphism $H^*(X) \cong H^{\on{BM}}_*(X)$.  
\subsection{Pushforwards and the Projection Formula}  
Suppose $f: X\rightarrow Y$ is a proper morphism of relative dimension $d$ of smooth, connected varieties (or Deligne-Mumford stacks).  Then there is a pushforward, or Gysin, map $f_*: H^*(X)\rightarrow H^{*-d}(Y)$.

The Gysin map satisfies the projection formula: for classes $c\in H^*(X), c'\in H^*(Y)$, we have
\begin{equation}\label{projection formula}
f_*(c\cup f^*c') = f_*(c)\cup c'.
\end{equation}
Moreover, if $f: X\rightarrow Y$ is a closed immersion, then 
\begin{equation}\label{pullpush}
f_*f^*c = c\cup [X],
\end{equation}
where $[X]$ denotes the fundamental class of $X$ in Borel-Moore homology (which is canonically isomorphic to cohomology since $Y$ is smooth).

\subsection{K\"unneth Components and Images Under Pullback}  
Suppose $C\in H^*(X\times Y)$ is a cohomology class and that the K\"unneth formula $H^*(X\times Y)\cong H^*(X)\otimes H^*(Y)$ holds.\ff{This is true when one of $X$, $Y$ is a Nakajima quiver variety: Nakajima proves that its cohomology is free abelian.}  We may write 
\begin{equation}\label{Kunneth}
C = \sum x_i\otimes y_i \; \text{with $x_i\in H^*(X)$, $y_i\in H^*(Y)$}.
\end{equation}  The classes $x_i$, $y_i$ are the {\em left-hand, respectively right-hand, K\"unneth components} of $C$ with respect to the decomposition \eqref{Kunneth}; they are not independent of the choice of decomposition \eqref{Kunneth}.

Now suppose that $f: X\rightarrow Y$ is a morphism from a smooth variety $X$ to a smooth, proper variety $Y$.  Let $\Gamma_f\subset X\times Y$ be the graph of the map.
\begin{prop}\label{prop:image}
The image of $f^*: H^*(Y)\rightarrow H^*(X)$ is contained in the span of the K\"unneth components of $[\Gamma_f]$ with respect to $X$ (and any decomposition as in \eqref{Kunneth}). 
\end{prop}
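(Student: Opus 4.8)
The plan is to exploit the factorization of $f^*$ through the graph together with the projection formula and a K\"unneth decomposition of $[\Gamma_f]$. Concretely, let $p\colon X\times Y\to X$ and $q\colon X\times Y\to Y$ be the two projections, and let $j\colon\Gamma_f\hookrightarrow X\times Y$ denote the graph inclusion. Since $Y$ is smooth and proper, $q$ is proper, so the Gysin pushforward $q_*\colon H^*(X\times Y)\to H^{*-2\dim Y}(X)$ is defined (here I am using $p$ for the pushforward direction; I will be careful with which projection). The key identity I want is: for any $c'\in H^*(Y)$,
\[
f^*c' = p_*\big([\Gamma_f]\cup q^*c'\big),
\]
where $p_*$ is the Gysin map for the (proper, since $Y$ is proper) projection $p\colon X\times Y\to X$.

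To prove this identity I would argue as follows. The graph $\Gamma_f$ is the image of the closed immersion $\gamma\colon X\to X\times Y$, $x\mapsto (x,f(x))$, and $p\circ\gamma=\id_X$, $q\circ\gamma=f$. By \eqref{pullpush} applied to $\gamma$, we have $\gamma_*\gamma^*(\alpha)=\alpha\cup[\Gamma_f]$ for $\alpha\in H^*(X\times Y)$; taking $\alpha=q^*c'$ gives $[\Gamma_f]\cup q^*c' = \gamma_*\big(\gamma^*q^*c'\big)=\gamma_*(f^*c')$. Now push forward along $p$ and use functoriality of Gysin maps for the composition $p\circ\gamma=\id_X$: $p_*\big([\Gamma_f]\cup q^*c'\big)=p_*\gamma_*(f^*c')=(\id_X)_*(f^*c')=f^*c'$. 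This establishes the displayed identity. (One should note $f^*c'$ is pulled back along $\gamma$ from the factor $X$, i.e. equals $p^*(f^*c')\cup[\Gamma_f]$-type expressions too, but the version above is what is needed.)

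With the identity in hand, write $[\Gamma_f]=\sum_i x_i\otimes y_i$ as in \eqref{Kunneth}, meaning $[\Gamma_f]=\sum_i p^*x_i\cup q^*y_i$. Then
\[
f^*c' = p_*\Big(\sum_i p^*x_i\cup q^*y_i\cup q^*c'\Big)=\sum_i p_*\big(p^*x_i\cup q^*(y_i\cup c')\big)=\sum_i x_i\cup p_*q^*(y_i\cup c'),
\]
where the last equality is the projection formula \eqref{projection formula} for $p$. Since $p_*q^*(y_i\cup c')$ is simply some class in $H^*(X)$ (a scalar in $H^*(\mathrm{pt})$ after integration over the fiber $Y$, but in any case an element of $H^*(X)$), this exhibits $f^*c'$ as an $H^*(X)$-linear combination—indeed here the coefficients $p_*q^*(y_i\cup c')$ lie in $H^*(X)$, but one can be slightly more careful: $p_*q^*$ is "integration over $Y$", landing in $H^*(\mathrm{pt})\subset H^*(X)$, so in fact $f^*c'$ lies in the $H^*(\mathrm{pt})$-span, hence certainly in the span, of the $x_i$. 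This is exactly the assertion of the proposition.

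The only real subtlety—hardly an obstacle—is bookkeeping: making sure the K\"unneth formula $H^*(X\times Y)\cong H^*(X)\otimes H^*(Y)$ actually holds so that the decomposition \eqref{Kunneth} exists, and that the Gysin maps behave functorially for the closed immersion $\gamma$ and the projection $p$. The former is guaranteed in the intended application by the footnote (one of $X,Y$ is a quiver variety with free cohomology), and here it is a hypothesis implicit in referring to \eqref{Kunneth}. The latter is standard. So the plan is essentially: (1) prove $f^*c'=p_*\big([\Gamma_f]\cup q^*c'\big)$ via \eqref{pullpush} and functoriality; (2) substitute the K\"unneth decomposition and apply the projection formula \eqref{projection formula}; (3) read off that $f^*c'$ lies in the span of the $x_i$.
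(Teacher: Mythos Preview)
Your proof is correct and is essentially the same argument as the paper's: both establish $f^*c' = p_*([\Gamma_f]\cup q^*c')$ via the push-pull identity \eqref{pullpush} for the graph immersion together with functoriality of Gysin maps, then substitute the K\"unneth decomposition and apply the projection formula to conclude. The paper simply compresses your steps (1)--(3) into a single displayed chain of equalities.
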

\begin{proof} 
Write $\xymatrix{X &  X\times Y \ar[l]_{p_X} \ar[r]^{p_Y} & Y}$ for the projections and, abusively, $\Gamma_f: X\rightarrow X \times Y$ for both the graph immersion and its image.    Write $p_*: Y\rightarrow \on{Spec}(\mathbb{C})$ for the projection to a point. Then $(p_X)_*$ exists since $Y$ is proper, and 
\bd
f^*d = (p_X)_* (\Gamma_f)_*\Gamma_f^*p_Y^* d =  (p_X)_*([\Gamma_f]\cup p_Y^*d) = \sum_i (p_X)_*\big[(p_X^* x_i\cup p_Y^*y_i)\cup p_Y^*d\big]
= \sum_i x_i \cup p_*(y_i\cup d).
\ed
This proves the claim.
\end{proof}

\subsection{Resolution of a Graph}
Again suppose that $f: X\rightarrow Y$ is a morphism from a smooth variety to an irreducible projective variety, with graph $\Gamma\subset X\times Y$.    We assume $f(X)\subset Y^{\on{sm}}$, the smooth locus of $Y$.  We consider the situation in which $f^*: H^*(Y)\rightarrow H^*(X)$ is surjective.
\begin{example}
If $H^{\on{BM}}_*(X,\mathbb{Z})\cong H^*(X,\mathbb{Z})$ is generated by algebraic cycles and $X\rightarrow Y$ is an open immersion, then $H^*(Y, \mathbb{Z})\rightarrow H^*(X,\mathbb{Z})$ is surjective.
\end{example}
\begin{remark}\label{quiver remark}
For a Nakajima quiver variety $\Ms$, $H^*(\Ms,\mathbb{Z})$ is generated by algebraic cycles by Theorem 7.3.5 of \cite{NakajimaJAMS}.
\end{remark}

Continuing with the above situation, let $\wt{Y}$ be a resolution of singularities; since $f(X)$ does not intersect the singular locus of $Y$, $f$ lifts canonically to a morphism $\wt{f}: X\rightarrow \wt{Y}$ and the preimage of $\Gamma_f$ in $X\times\wt{Y}$ is $\Gamma_{\wt{f}}$.  
\begin{prop}\label{prop:resolution}
Suppose that 
\begin{equation}\label{R summary}
R: \hspace{1em} \bigoplus_j\cE_j^{-1}\boxtimes \cF_j^{-1} \longrightarrow \bigoplus_j\cE_j^0\boxtimes \cF_j^0\longrightarrow \bigoplus_j\cE_j^1\boxtimes\cF_j^1
\end{equation}
is a complex of vector bundles on $X\times Y$ with the following properties.
\begin{enumerate}
\item $\mathcal{H}^1(R) = 0$, $\mathcal{H}^{-1}(R)=0$, and $\mathcal{H}:= \mathcal{H}^0(R)$ is a vector bundle on $X\times Y$.
\item $\on{rk}(\mathcal{H}) = d:=\dim(Y)$.
\item $s\in H^0(X\times Y, \mathcal{H})$ is a section with scheme-theoretic zero locus $Z(s) = \Gamma$.
\end{enumerate}
Letting $\wt{Y}\rightarrow Y$ be a resolution of singularities, write
\bd 
\wt{R}: \hspace{1em} \bigoplus_j\cE_j^{-1}\boxtimes \wt{\cF}_j^{-1} \longrightarrow \bigoplus_j\cE_j^0\boxtimes \wt{\cF}_j^0\longrightarrow \bigoplus_j\cE_j^1\boxtimes\wt{\cF}_j^1
\ed
for the pullback of $R$ to $X\times\wt{Y}$ and $\wt{\mathcal{H}} = \mathcal{H}^0(\wt{R})$.  Then:
\begin{enumerate}
\item[(i)] $c_d(\wt{\mathcal{H}}) = [\Gamma_{\wt{f}}]$ in $X\times\wt{Y}$.
\item[(ii)] The Chern classes of $\wt{\mathcal{H}}$ are polynomials, with integer coefficients, in the Chern classes of the bundles $\cE_j^{\ell}$ and $\wt{\cF}_j^{\ell}$.
\item[(iii)] The image of the map $H^*(\wt{Y},\mathbb{Z}) \rightarrow H^*(X,\mathbb{Z})$ is contained in the span of the Chern classes of the bundles $\cE_j^\ell$.
\end{enumerate}
\end{prop}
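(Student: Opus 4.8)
The plan is to prove (i) first, read off (ii) from the same homological bookkeeping, and then combine the two with Proposition~\ref{prop:image} to obtain (iii). All of the geometry is in (i).

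For (i), the first step is to check that passing to $\mathcal H^0$ of the complex commutes with pullback along $g:=\on{id}_X\times\pi\colon X\times\wt Y\to X\times Y$, where $\pi\colon\wt Y\to Y$ is the resolution (recall $\pi$ is an isomorphism over $Y^{\on{sm}}\supseteq f(X)$, so that $\wt f$ exists and $g^{-1}(\Gamma)=\Gamma_{\wt f}$ scheme-theoretically, as noted before the Proposition). The hypotheses on $R$ repackage as two short exact sequences of vector bundles on $X\times Y$: putting $K=\ker(R^0\to R^1)$, the vanishing $\mathcal H^1(R)=0$ gives $0\to K\to R^0\to R^1\to 0$, which shows $K$ is a bundle (kernel of a surjection of bundles), and then $\mathcal H^{-1}(R)=0$ together with the definition of $\mathcal H$ gives $0\to R^{-1}\to K\to\mathcal H\to 0$. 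Both sequences have locally free third term, so they are locally split and stay exact under $g^*$; hence $\wt{\mathcal H}=\mathcal H^0(\wt R)=g^*\mathcal H$ is a vector bundle of rank $d$, and its section $\wt s=g^*s$ has scheme-theoretic zero locus $g^{-1}(\Gamma)=\Gamma_{\wt f}$. Since $\Gamma_{\wt f}\cong X$ is smooth of pure codimension $d=\on{rk}(\wt{\mathcal H})$ in the smooth (hence Cohen--Macaulay) variety $X\times\wt Y$, the section $\wt s$ is regular — its local equations form a regular sequence and the Koszul complex resolves $\mathcal O_{\Gamma_{\wt f}}$ — so $[\Gamma_{\wt f}]=c_d(\wt{\mathcal H})$ in $\on{CH}_*(X\times\wt Y)$, and hence in $H^{\on{BM}}_*\cong H^*$ via the cycle map. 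This is (i).

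For (ii), the two short exact sequences above, pulled back, give $[\wt{\mathcal H}]=[\wt R^0]-[\wt R^{-1}]-[\wt R^1]$ in $K^0(X\times\wt Y)$. By the Whitney sum formula the total Chern class descends to a homomorphism from $(K^0,+)$ to the group of units $1+H^{>0}(X\times\wt Y)$, and $H^{>0}$ here is nilpotent since $X\times\wt Y$ is finite-dimensional, so we obtain an honest polynomial identity $c(\wt{\mathcal H})=c(\wt R^0)\,c(\wt R^{-1})^{-1}\,c(\wt R^1)^{-1}$ with integer coefficients. As $\wt R^\ell=\bigoplus_j\cE_j^\ell\boxtimes\wt\cF_j^\ell$ and $c(\cE\boxtimes\cF)$ is the standard integral universal polynomial in the Chern classes of the two factors (and inverting the units $c(\wt R^{\pm1})$ introduces only integers), each $c_k(\wt{\mathcal H})$ — in particular $c_d(\wt{\mathcal H})$ — is a polynomial with integer coefficients in the $c_i(\cE_j^\ell)$ and the $c_k(\wt\cF_j^\ell)$. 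This is (ii).

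Finally, for (iii): by (i) and (ii), $[\Gamma_{\wt f}]=c_d(\wt{\mathcal H})$ is an integral polynomial in the classes $p_X^*c_i(\cE_j^\ell)$ and $p_{\wt Y}^*c_k(\wt\cF_j^\ell)$, so (collecting terms) it can be written as $\sum_m p_X^*\alpha_m\cup p_{\wt Y}^*\beta_m$, a decomposition of the form~\eqref{Kunneth} in which each left-hand component $\alpha_m\in H^*(X)$ is a polynomial in the Chern classes of the $\cE_j^\ell$. The argument in the proof of Proposition~\ref{prop:image} applies verbatim to this explicit decomposition (we do not need Künneth to be an isomorphism, only a decomposition in hand, and $\wt Y$ is smooth and proper), and shows that the image of $\wt f^*\colon H^*(\wt Y)\to H^*(X)$ lies in the $\mathbb Z$-span of the $\alpha_m$, hence in the subring of $H^*(X)$ generated by the Chern classes of the $\cE_j^\ell$. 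The step I expect to be the main obstacle is the first one in (i): confirming that the hypotheses on $R$ genuinely force $\wt{\mathcal H}=g^*\mathcal H$ as a vector bundle and $Z(\wt s)=\Gamma_{\wt f}$ as schemes, since everything afterward relies on $\wt s$ being a true regular section of a true rank-$d$ bundle.
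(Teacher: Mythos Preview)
Your proof is correct and follows essentially the same approach as the paper's: both argue (i) via the standard fact that a section of a rank-$d$ bundle whose zero locus has codimension $d$ represents $c_d$, (ii) via Whitney additivity and the universal formula for $c(\cE\boxtimes\cF)$, and (iii) by feeding the resulting K\"unneth-type decomposition into Proposition~\ref{prop:image}. You are in fact more careful than the paper in (i), explicitly verifying via the two short exact sequences that $\wt{\mathcal H}=g^*\mathcal H$ and $Z(\wt s)=g^{-1}(\Gamma)$, which the paper leaves implicit.
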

\begin{proof}
(i) It is standard that if the zero locus of a section of a vector bundle $\wt{\mathcal{H}}$ of rank $d$ has codimension $d$---in which case it is a local complete intersection subscheme---then its fundamental class equals $c_d(\wt{\mathcal{H}})$.

(ii) By the additivity of Chern classes, we have  
\bd
c(\wt{\mathcal{H}}) = \prod_j c(\cE_j^0\boxtimes\wt{\cF}_j^0) \prod_j c(\cE_j^{-1}\boxtimes\wt{\cF}_j^{-1})\inv \prod_j c(\cE_j^1\boxtimes\wt{\cF}_j^1)\inv.
\ed
The inverses of the total Chern classes are the total Segre classes, which are known to be polynomials, with integer coefficients, in the Chern classes: see Chapter 5 of \cite{Fulton}.  Moreover, the Chern classes of $\cE_j^\ell\boxtimes\wt{\cF}_j^\ell$ are also polynomials (with integer coefficients) in the Chern classes of $\cE_j^\ell$ and $\wt{\cF}_j^\ell$: see Example 14.5.2 of \cite{Fulton}.\footnote{This is, however, abstractly clear: the Chern classes are pulled back along the composite $X\times\wt{Y}\rightarrow BGL(\on{rk}(\cE_j^\ell))\times BGL(\on{rk}(\wt{\cF}_j^\ell))\xrightarrow{\otimes}  BGL\big(\on{rk}(\cE_j^\ell) \cdot \on{rk}(\wt{\cF}_j^\ell)\big)$, hence are polynomials in the cohomology classes generating $H^*\big(BGL(\on{rk}(\cE_j^\ell))\times BGL(\on{rk}(\wt{\cF}_j^\ell))\big)$.}  

(iii) By parts (i) and (ii), the class $[\Gamma_{\wt{f}}]$ has a K\"unneth decomposition \eqref{Kunneth} whose left-hand components are integer polynomials in the Chern classes of the bundles $\cE_j^\ell$.  Assertion (iii) is now immediate from Proposition \ref{prop:image}.
\end{proof}
\begin{corollary}\label{cohomology generation}
Suppose that $\Ms$ is a smooth Nakajima quiver variety and $\Ms\hookrightarrow\overline{\Ms}$ is an open immersion in a projective variety.  If the graph $\Gamma$ of the immersion can be written as the zero locus $Z(s)$ of a section $s\in H^0(\Ms\times\overline{\Ms},\mathcal{H})$ of a vector bundle $\mathcal{H}$ as in Proposition \ref{prop:resolution}, then $H^*(\Ms,\mathbb{Z})$ is generated by the Chern classes of the bundles $\cE_j^\ell$.
\end{corollary}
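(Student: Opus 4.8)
The plan is to deduce the corollary by assembling the two preceding propositions together with Nakajima's theorem that the cohomology of a quiver variety is generated by algebraic cycles (Remark~\ref{quiver remark}); since all of the geometric work is done elsewhere, this is essentially a packaging argument.

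First I would set up the resolution picture in the form demanded by Proposition~\ref{prop:resolution}. Because $f\colon\Ms\hookrightarrow\overline{\Ms}$ is an open immersion and $\Ms$ is smooth, the image $f(\Ms)$ is a smooth open subset of $\overline{\Ms}$, hence disjoint from its singular locus. Choosing a resolution of singularities $\pi\colon\wt{\overline{\Ms}}\to\overline{\Ms}$ that is an isomorphism over $\overline{\Ms}^{\on{sm}}$, the map $f$ lifts canonically to an open immersion $\wt{f}\colon\Ms\hookrightarrow\wt{\overline{\Ms}}$ into a smooth projective variety, and the preimage of $\Gamma$ in $\Ms\times\wt{\overline{\Ms}}$ is the graph $\Gamma_{\wt{f}}$. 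Applying Proposition~\ref{prop:resolution} with $X=\Ms$ and $Y=\overline{\Ms}$ is then legitimate; note also that the K\"unneth formula invoked by Propositions~\ref{prop:image} and \ref{prop:resolution} holds here because $H^*(\Ms,\mathbb Z)$ is free abelian.

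Next I would check that $\wt{f}^{\,*}\colon H^*(\wt{\overline{\Ms}},\mathbb Z)\to H^*(\Ms,\mathbb Z)$ is surjective. By Remark~\ref{quiver remark}, every class in $H^*(\Ms,\mathbb Z)\cong H^{\on{BM}}_*(\Ms,\mathbb Z)$ is a $\mathbb Z$-combination of classes of algebraic cycles; the Zariski closure in $\wt{\overline{\Ms}}$ of such a cycle is an algebraic cycle on the smooth projective variety $\wt{\overline{\Ms}}$, whose cohomology class restricts under $\wt{f}^{\,*}$ to the original class. (This is exactly the Example of the present subsection.) Then Proposition~\ref{prop:resolution}(iii), applied to the complex $R$ and section $s$ supplied by hypothesis, shows that the image of $\wt{f}^{\,*}$ lies in the $\mathbb Z$-subalgebra of $H^*(\Ms,\mathbb Z)$ generated by the Chern classes of the bundles $\cE_j^\ell$. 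Since that image is all of $H^*(\Ms,\mathbb Z)$, the corollary follows.

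The only step with any genuine content is the surjectivity of $\wt{f}^{\,*}$, and even that is immediate once one cites Nakajima's algebraic-cycle generation; everything else is the bookkeeping already carried out in Propositions~\ref{prop:image}--\ref{prop:resolution}. The real difficulty hidden behind the statement — producing a projective compactification $\overline{\Ms}$ together with a complex $R$ built from external tensor products of tautological bundles whose middle cohomology is a rank-$d$ bundle cut out of $\Gamma$ by a section — is precisely the hypothesis of this corollary, and is what the remainder of the paper must supply.
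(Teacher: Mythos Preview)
Your proposal is correct and follows essentially the same route as the paper: use Nakajima's result (Remark~\ref{quiver remark}) that $H^*(\Ms,\mathbb{Z})$ is generated by algebraic cycles to deduce surjectivity of the restriction from the (resolved) compactification, then invoke Proposition~\ref{prop:resolution}(iii). The paper's version is terser---it asserts surjectivity of $H^*(\overline{\Ms})\to H^*(\Ms)$ directly and leaves the passage to $\wt{\overline{\Ms}}$ implicit---but your more explicit treatment of the resolution is if anything cleaner.
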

\begin{proof}
As explained above, $H^*(\Ms,\mathbb{Z})$ is known to be generated by algebraic cycles; hence  (cf. Proposition 1.8 of \cite{Fulton}) for any projective compactification $\overline{\Ms}$ the restriction map 
$H^*(\overline{\Ms},\mathbb{Z})\rightarrow H^*(\Ms,\mathbb{Z})$ is surjective.  The assertion is now immediate from Proposition \ref{prop:resolution}.
\end{proof}

\section{Quiver Varieties}\label{sec:quiver-var}
\subsection{Basics of Quivers}\label{sec:basicsofquivers}
Let $(I,E)$ be an undirected graph with with vertex set $I$ and edge set $E$.  Following Nakajima \cite{Nakajima Duke, NakajimaJAMS}, we let $H$ denote the set of pairs of an edge with an orientation; thus $H$ comes with source and target maps $s,t: H\rightarrow I$.  Given $h\in H$, we let $\overline{h}$ denote the same edge with opposite orientation, so $s(\overline{h})= t(h)$ and $t(\overline{h}) = s(h)$.  

Next, fix a preferred orientation for each edge: in other words, fix a decomposition $H = \Omega\sqcup\overline{\Omega}$ where
$\overline{\Omega} = \{\overline{h}\; |\; h\in\Omega\}$.  We let $Q = (I,\Omega)$ denote the quiver, i.e., the finite directed graph, with vertices $I$ and arrows $\Omega$; then $Q^{\on{dbl}} = (I, H)$ is the associated {\em doubled quiver}.    
We define a function 
\bd
\epsilon: H\longrightarrow\{\pm 1\} \hspace{1em}\text{by}\hspace{1em} \epsilon(h) = \begin{cases} 1 &  \text{if $h\in\Omega$},\\ -1 & \text{if $h\in\overline{\Omega}$}.\end{cases}
\ed 
The {\em preprojective algebra} is the quotient $\ds\Pi^0(Q) = kQ^{\on{dbl}}/\Big(\sum_{h\in H} \epsilon(h)\overline{h}h\Big)$ of the path algebra $kQ^{\on{dbl}}$ of the doubled quiver.  The relation $\ds\sum_{h\in H} \epsilon(h)\overline{h}h = 0$ is the {\em preprojective relation}.  

If $V_\bullet$ is an $I$-graded vector space, then $\ds\on{Rep}(Q,V_\bullet) = \bigoplus_{h\in\Omega} \Hom(V_{s(h)}, V_{t(h)})$.  When $\mathbf{v}\in\mathbb{Z}_{\geq 0}^I$ and $V_i = \mathbb{C}^{\mathbf{v}_i}$ for all $i\in I$, we write $\on{Rep}(Q,\mathbf{v}) = \on{Rep}(Q,V_\bullet)$.

Let $\mathbf v = (v_i)_{i\in I}, \mathbf w = (w_i)_{i\in I}$ be dimension vectors, and $V_i, W_i (i\in I)$ be complex vector spaces with $\on{dim}(V_i) = v_i, \dim(W_i) = w_i$; here $W_i$ are the {\em framing vector spaces}.  Given pairs ${\mathbf v}^1, {\mathbf w}^1$ and ${\mathbf v}^2, {\mathbf w}^2$ and vector spaces $V_i^j, W_i^j$ ($j=1,2$) as above, let
\bd
L(V^1, V^2) = \bigoplus_{i\in I} \Hom(V_i^1, V_i^2), \hspace{3em}  E(V^1, V^2) = \bigoplus_{h\in H} \Hom(V^1_{s(h)}, V^2_{t(h)}).
\ed
One has obvious actions of $L(V^1,V^2)$ on $L(V^2,V^3)$, $L(V^3,V^1)$, $E(V^2,V^3)$, and $E(V^3,V^1)$.

On p. 520 of \cite{Nakajima Duke}, Nakajima defines a bilinear multiplication 
\bd
E(V^2, V^3)\times E(V^1, V^2) \rightarrow L(V^1, V^3),  \hspace{2em} \text{by}
\ed 
\bd
\ds (C,B)\mapsto CB = \left(\sum_{t(h) = k} C_hB_{\overline{h}}\right)_k \in L(V^1, V^3).
\ed

Now, fixing $\mathbf{v}, \mathbf{w}$ and collections of vector spaces $(V_i), (W_i)$ as above, let
\bd
\mathbb{M} = \mathbb{M}(\mathbf{v},\mathbf{w}) = E(V,V)\oplus L(W,V)\oplus L(V,W).
\ed
We write $[B,i,j]$ for an element of $\mathbb{M}$.
The group 
\bd
\mathbb{G} = \mathbb{G}(\mathbf{v})= \prod_i GL(V_i)\cong \prod_i GL_{v_i}
\ed
 acts linearly on $\mathbb{M}$ in the obvious way.  There is a canonical moment map, coming from the identification of $\mathbb{M}$ as a cotangent bundle to a linear space, written $\mu: \mathbb{M}\longrightarrow \on{Lie}(\mathbb{G})^*$.  

\subsection{Crawley-Boevey's Construction}\label{sec:CB quiver}
Suppose $Q=(I,\Omega)$ is a quiver with dimension vectors $\mathbf{v}, \mathbf{w}$ as above.  To such data, Crawley-Boevey associates \cite[Section~1]{CB} a new quiver, that we will denote by $Q^{\on{CB}}$.  It has vertex set $I^{\on{CB}} = I\cup \{\infty\}$, and oriented arrows 
\bd
\Omega^{\on{CB}} = \Omega\cup \big\{a_{(i,j)} \; |\; s(a_{i,j}) = \infty, t(a_{i,j}) = i,\; i\in I, j\in \{1,\dots, \mathbf{w}_i\}\big\}.
\ed
In other words, we add $\mathbf{w}_i$-many arrows from $\infty$ to $i$.  Let $\alpha\in {\mathbb Z}_{\geq 0}^{I^{\on{CB}}}$ be the dimension vector for $Q^{\on{CB}}$ that equals $\mathbf{v}_i$ at $i\in I$ and $1$ at the vertex $\infty$.  Then $\mathbb{M}(\mathbf{v},\mathbf{w}) = T^*\on{Rep}(Q^{\on{CB}},\alpha)$.   Also the natural homomorphism  $\mathbb{G}\rightarrow \ds G(\alpha) := \prod_{i\in I^{\on{CB}}}  GL(\alpha_i) / \Gm$ (where $\Gm$ is the diagonal multiplicative group) is an isomorphism, making the identification of $\mathbb{M}(\mathbf{v},\mathbf{w})$ with $T^*\on{Rep}(Q^{\on{CB}},\alpha)$
equivariant.   It is immediate that the two canonical moment maps coincide.  

\subsection{Semistability and Stability for Quiver Representations}\label{sec:semistability}
Fix a quiver $Q= (I,\Omega)$ with dimension vector $\mathbf{v}$.
Let $\mathbb{G} = \prod_i GL(\alpha_i)$ denote the group detemined by $Q$.
We write $\beta\leq \alpha$ for a dimension vector $\beta$ if $\beta_i\leq \alpha_i$ for all $i\in I$, and $\beta<\alpha$ if $\beta\leq \alpha$ and $\beta\neq\alpha$.

Following \cite{King}, given a character $\chi:\mathbb{G}\rightarrow\Gm$,
write 
\bd
\chi\big((g_i)_{i\in I}\big) = \prod_{i\in I} \det(g_i)^{\theta_i} \hspace{1em} \text{and} \hspace{1em} \theta = (\theta_i)_{i\in I}.
\ed
Given an $I$-graded vector space $(M_i)_{i\in I}$, we define $\delta_i(M) =\dim(M_i)$, and thus define 
\bd
\theta = \sum_i \theta_i \delta_i  \hspace{1em}\text{so that} \hspace{1em} \theta(M) = \sum_i\theta_i\dim(M_i).
\ed

  Associated to $\chi$ one gets a corresponding notion of GIT semistability as in \cite{King}.  
  In particular, by Proposition 3.1 of \cite{King}, if $V$ is a representation of $Q$ for which $\theta(V) = 0$, then $V$ is $\chi$-semistable, respectively stable, if and only if for every nonzero proper subrepresentation $M\subset V$, we have
  \bd
\theta(M) \geq 0, \hspace{2em}
\text{respectively $\theta(M)>0$.}
\ed  
\begin{defn}\label{nondegen}
We will call the semistability condition $\theta$ a {\em nondegenerate} stability (with respect to $\alpha$) if, for every nonzero dimension vector $\beta < \alpha$ we have $\theta(\beta)\neq 0$.  
\end{defn}
\begin{remark}
If $\theta$ is a nondegenerate stability condition, then semistability and stability coincide.  The converse is treated in \cite[Theorem~2.8]{Nakajima Duke}.
\end{remark}

Now suppose that $Q_0 = (I_0,\Omega_0)$ is a quiver with dimension vector $\mathbf{v}$ and framing vector $\mathbf{w}$ and that $Q = Q_0^{\on{CB}} = (I,\Omega)$ is the associated Crawley-Boevey quiver, with dimension vector $\alpha$ so that $\alpha_\infty = 1$ and $\alpha|_{I_0} = \mathbf{v}$.  
Write $\mathbb{G}_0 = \prod_{i\in I_0}GL(\mathbf{v}_i)$ and $\mathbb{G} = \prod_i GL(\alpha_i)$, so that $\mathbb{G} = \mathbb{G}_0\times\Gm$.  
Given any character $\chi_0: \mathbb{G}_0\rightarrow \Gm$,
$\chi_0(g_i) = \prod_{i\in I_0} \det(g_i)^{\theta_i}$,
 let $\delta: \Gm\rightarrow \mathbb{G}_0$ be the diagonal $\Gm$ and write $\chi_0(\delta(z)) = z^d$.  We get a character $\chi: \mathbb{G} \rightarrow \Gm$ by $\chi(g, z) = \chi_0(g)z^{-d}$; we slightly abusively write
 $\ds\theta = \sum_{i\in I_0}(\theta_0)_i\delta_i -d \delta_{\infty}$.  Then $\chi$ is trivial on the diagonal $\Gm$ in $\mathbb{G}_0\times\Gm$, and thus it factors through a character of $G(\alpha): = \mathbb{G}/\Gm$, which obviously agrees with $\chi_0$ under the isomorphism $\mathbb{G}_0\rightarrow G(\alpha)$.

Recalling the moment map 
\bd
\mu: \mathbb{M}(\mathbf{v},\mathbf{w})\rightarrow \on{Lie}(\mathbb{G})^*
\ed
above,  the Nakajima quiver variety associated to dimension vector $\mathbf{v}$ and framing vector $\mathbf{w}$ is 
\bd
\Ms = \Ms_\theta(\mathbf{v},\mathbf{w}) := \mu\inv(0)/\!\!/_{\chi_0}\mathbb{G}_0.
\ed
We suppress the subscript $\theta$ in the rest of the paper.  Crawley-Boevey \cite[p.~261]{CB} shows that
$\mathbb{M}(\mathbf{v},\mathbf{w})\cong T^*\on{Rep}(Q,\alpha)$, intertwining the $\mathbb{G}_0$ and $\mathbb{G}$-actions, and identifying $\chi_0$-(semi)stability with $\chi$-(semi)stability.  Thus we may take the Hamiltonian reduction of $T^*\on{Rep}(Q,\alpha)$ with respect to $G(\alpha)$, using the stability condition determined by $\chi$ or equivalently $\theta$, and obtain $\Ms$ as the GIT quotient.

\subsection{Tautological Bundles and Nakajima's Section}\label{tautological bundles}
We continue with a quiver $Q = (I,\Omega)$.
Let $V^1$ and $V^2$ be $I$-graded vector spaces of dimension $\mathbf v$, and $W$ an $I$-graded vector space of dimension $\mathbf w$.  
One defines functors from $\mathbb{G}$-representations, respectively $\mathbb{G}\times\mathbb{G}$-representations, to $\mathbb{G}$-equivariant vector bundles on a $\mathbb{G}$-variety $Z$, respectively to $\mathbb{G}\times\mathbb{G}$-equivariant vector bundles on a $\mathbb{G}\times\mathbb{G}$-variety, by $R\mapsto \mathscr{R} := \theo\otimes_{\mathbb{C}} R$.   

In particular, each $V^j_i$ defines a $\mathbb{G}$-equivariant vector bundle $\mathscr{V}^j_i$ on $\mathbb{M}$, and the $\mathbb{G}\times\mathbb{G}$-representations $L(V^1, V^2), E(V^1, V^2), L(W,V^2), L(V^1, W)$ define $\mathbb{G}\times\mathbb{G}$-equivariant vector bundles 
\bd
\mathscr{L}(V^1, V^2),\hspace{1em} \mathscr{E}(V^1, V^2), \hspace{1em} \mathscr{L}(W,V^2), \hspace{1em} \mathscr{L}(V^1, W)
\ed
 on $\mathbb{M}\times\mathbb{M}$ (where $\mathbb{G}\times\mathbb{G}$ acts on $V^1$ via the first factor and on $V^2$ via the second factor).
\begin{remark}\label{remark:stacks}
In the language of stacks, these bundles are pullbacks along $\mathbb{M}/\mathbb{G}\times\mathbb{M}/\mathbb{G}\rightarrow B\mathbb{G}\times B\mathbb{G}$.  
\end{remark}

Nakajima defines $\mathbb{G}\times\mathbb{G}$-equivariant homomorphisms,
 \begin{equation}\label{vb-maps}
\xymatrix{
\mathscr{L}(V^1,V^2) \ar[r]^{\hspace{-6em}\sigma} & \mathscr{E}(V^1,V^2)\oplus \mathscr{L}(W,V^2)\oplus \mathscr{L}(V^1,W) \ar[r]^{\hspace{6em}\tau} & \mathscr{L}(V^1,V^2),
}
\end{equation}
where at a point $([B,i,j],[B',i',j'])\in\mathbb{M}\times\mathbb{M}$ the maps $\sigma$, $\tau$ are given by 
\begin{equation}\label{sigma-tau}
\sigma(\xi) = (B'\xi - \xi B,-\xi  i, j' \xi),
\hspace{2em}
\tau(C,a,b) = \epsilon B'C+\epsilon CB + i'b + aj.
\end{equation}

\begin{prop}\label{prop:Nakajima-maps}
Suppose $[B,i,j], [B',i',j']\in \mathbb{M}$.  
\mbox{}
\begin{enumerate}
\item If $[B',i',j']\in\mathbb{M}^{\on{s}}$ then $\sigma$ is injective in the fiber over $\big([B,i,j], [B',i',j']\big)$.
\item If $[B,i,j]\in\mathbb{M}^{\on{s}}$ then $\tau$ is surjective in the fiber over $\big([B,i,j], [B',i',j']\big)$.
\item $\tau\circ\sigma = 0$  in the fiber over $\big([B,i,j], [B',i',j']\big)\in\mu\inv(0)\times\mu\inv(0)$.
\end{enumerate}
\end{prop}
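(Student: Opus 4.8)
The plan is to treat the three assertions separately: (3) by a direct expansion that reproduces the moment map, (1) by identifying $\ker\sigma$ with ``morphisms'' between the two sets of quiver data, and (2) by transposing $\tau$ — equivalently, by the self-duality of Nakajima's complex — and reducing it to (1). I would dispose of (3) first, as it needs no stability hypothesis. Substituting $\sigma(\xi)=(B'\xi-\xi B,-\xi i,j'\xi)$ into \eqref{sigma-tau} and expanding $\tau(\sigma(\xi))$, the two terms that are linear in both $B'$ and $B$, namely $-\epsilon\,B'(\xi B)$ and $+\epsilon\,(B'\xi)B$, cancel by the compatibility of Nakajima's product $E\times E\to L$ with the actions of $L$ (an associativity identity), leaving
\[
(\tau\circ\sigma)(\xi)\;=\;\mu(B',i',j')\cdot\xi\;-\;\xi\cdot\mu(B,i,j),
\]
where the two factors are read as elements of $\bigoplus_k\mathfrak{gl}(V_k)\cong\on{Lie}(\mathbb{G})^*$ via the trace form — the combination $\epsilon BB+ij$ appearing in $\tau$ being exactly the one that defines $\mu$. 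On $\mu\inv(0)\times\mu\inv(0)$ both factors vanish, so $\tau\circ\sigma=0$ on that fiber; the only care needed is to track the signs $\epsilon(h)$ through the expansion.

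For (1), let $\xi\in\ker\sigma$ in the fiber over $([B,i,j],[B',i',j'])$. Then $\sigma(\xi)=0$ unpacks to $B'\xi=\xi B$ (componentwise $B'_h\xi_{s(h)}=\xi_{t(h)}B_h$ for all $h\in H$), $\xi i=0$, and $j'\xi=0$. The first relation shows that the $I$-graded image $\im(\xi)\subseteq V^2$ is $B'$-invariant, and the last shows that $j'$ annihilates $\im(\xi)$. I would then invoke the description of stable points for the chosen nondegenerate $\theta$ (following \cite{King}; cf.\ \cite{CB}): a point $[B',i',j']\in\mathbb{M}^{\on{s}}$ admits no nonzero $B'$-invariant $I$-graded subspace of $V^2$ contained in $\ker(j')$. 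Hence $\im(\xi)=0$, i.e.\ $\xi=0$, which is the asserted injectivity.

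For (2), surjectivity of $\tau$ in a given fiber is equivalent to injectivity of its transpose $\tau^{\vee}$. Using the trace pairings $\Hom(A_k,B_k)\cong\Hom(B_k,A_k)^{\vee}$ one identifies $\mathscr{E}(V^1,V^2)^{\vee}\cong\mathscr{E}(V^2,V^1)$ (reindexing $h\mapsto\overline{h}$), $\mathscr{L}(V^1,V^2)^{\vee}\cong\mathscr{L}(V^2,V^1)$, $\mathscr{L}(W,V^2)^{\vee}\cong\mathscr{L}(V^2,W)$, and $\mathscr{L}(V^1,W)^{\vee}\cong\mathscr{L}(W,V^1)$; under these a direct computation identifies $\tau^{\vee}$ at $([B,i,j],[B',i',j'])$ with $\sigma$ at the swapped point $([B',i',j'],[B,i,j])$ — the self-duality of Nakajima's complex, with $\epsilon$ again the only subtlety. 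Thus $\tau$ is surjective in the given fiber iff $\sigma$ is injective in the fiber over $([B',i',j'],[B,i,j])$, which holds by (1) since the ``second'' point of that pair is $[B,i,j]\in\mathbb{M}^{\on{s}}$. (Explicitly: $\ker(\tau^{\vee})$ consists of those $\eta\in\bigoplus_k\Hom(V^2_k,V^1_k)$ with $B\eta=\eta B'$, $j\eta=0$, and $\eta i'=0$; then $\im(\eta)\subseteq V^1$ is $B$-invariant and killed by $j$, hence $0$ by stability of $[B,i,j]$, so $\tau^{\vee}$ is injective.)

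The main obstacle is not conceptual: each part reduces to either the moment-map identity (for (3)) or the standard characterization of $\mathbb{M}^{\on{s}}$ by the absence of invariant subspaces inside $\ker j$, resp.\ $\ker j'$ (for (1)--(2)). The one thing demanding attention is bookkeeping — carrying the sign function $\epsilon$ correctly through the expansion of $\tau\circ\sigma$ and through the trace-duality identification used for (2).
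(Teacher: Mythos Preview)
Your computation for (3) is correct and standard; likewise the duality reduction of (2) to (1) is fine.

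The gap is in (1). Your key step---``a point $[B',i',j']\in\mathbb{M}^{\on{s}}$ admits no nonzero $B'$-invariant $I$-graded subspace of $V^2$ contained in $\ker(j')$''---is the description of stability for the specific character used in \cite{Nakajima Duke}, not for an arbitrary nondegenerate $\theta$. If, say, every $\theta_i>0$ on $I_0$ (so $\theta_\infty<0$), then King's criterion says $[B',i',j']$ is stable iff $\im(i')$ generates $V^2$ under $B'$; nothing prevents $j'=0$. Concretely, take $Q_0$ a single vertex with no edges and $\mathbf v=\mathbf w=(1)$: then $\mathbb{M}=\{(i,j)\}\cong\mathbb{C}^2$, and with $\theta_1>0$ the point $[B',i',j']=(0,1,0)$ is stable. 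Choosing $[B,i,j]=(0,0,1)$ gives $\sigma(\xi)=(-\xi\cdot 0,\,0\cdot\xi)=0$ for every $\xi$, so $\sigma$ is identically zero. Thus assertions (1)--(2), with only one of the two points assumed stable, are simply false for general nondegenerate $\theta$; the argument cannot be repaired because there is nothing to prove.

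What the paper actually establishes ``in general'' (Proposition~\ref{prop:complex-features} and Theorem~\ref{resolution-prop}) is the weaker statement that $\sigma$ is injective and $\tau$ surjective when \emph{both} points are stable---which is all that is used downstream. The route there is different from yours: one passes to the Crawley--Boevey quiver, identifies $\ker(\partial_0)$ and $\on{coker}(\partial_1)$ with $\Hom_A(V,W)$ and $\Hom_A(W,V)^*$, and then invokes Schur's lemma for stable modules (Proposition~\ref{prop:quiver-moduli}(3)) to see these are at most one-dimensional; any nonzero homomorphism between stable modules of the same dimension vector is an isomorphism, hence nonzero at the vertex $\infty$, so the restriction of $\partial_0$ to $L(V^1,V^2)_{I_0}$---which is Nakajima's $\sigma$---is injective. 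Your argument, as written, is exactly the one the paper cites from \cite{Nakajima Duke} for Nakajima's particular $\theta$, and it is correct in that chamber.
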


Now define a section $s$ of  $\mathscr{E}(V^1,V^2)\oplus \mathscr{L}(W,V^2)\oplus \mathscr{L}(V^1,W)$ by 
\begin{equation}\label{section-s}
s([B,i,j],[B',i',j']) = (0,-i',j).
\end{equation}
\begin{prop}\label{prop:Nakajima-section}
\mbox{}
\begin{enumerate}
\item
Over $\mathbb{M}\times\mathbb{M}$, we have $\tau(s)= 0$.
\item Viewing $s|_{\mu\inv(0)\times \mu\inv(0)^{\on{s}}}$ as a section of $\on{coker}(\sigma)$,
its vanishing locus $Z(s)$ in $\mu\inv(0)^{\on{s}}\times \mu\inv(0)^{\on{s}}$  is smooth and equals the locus of pairs $\big([B,i,j], [B',i',j']\big)$ for which 
\bd
\mathbb{G}\cdot[B,i,j] = \mathbb{G}\cdot[B',i',j'].
\ed
\end{enumerate}
\end{prop}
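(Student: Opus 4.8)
The plan is to dispatch (1) by direct substitution, and to prove (2) by recognizing the vanishing of $\bar s$ in $\coker(\sigma)$ as the existence of a morphism of Crawley--Boevey representations, then invoking a Schur-type argument for $\theta$-stable representations of total weight zero. For (1): substitute $C = 0$, $a = -i'$, $b = j$ into the formula $\tau(C,a,b) = \epsilon B'C + \epsilon CB + i'b + aj$ of \eqref{sigma-tau}. The first two terms vanish because $C = 0$, and the remaining terms give $i'j + (-i')j = 0$. Since this identity holds pointwise on all of $\mathbb{M}\times\mathbb{M}$, we get $\tau(s) = 0$ there, as asserted.

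For (2), write $\bar s$ for the image of $s$ in $\coker(\sigma)$; this is a section of a vector bundle over $\mu\inv(0)\times\mu\inv(0)^{\on{s}}$ since $\sigma$ is fiberwise injective there by Proposition~\ref{prop:Nakajima-maps}(1). At a pair $\big([B,i,j],[B',i',j']\big)$ the section $\bar s$ vanishes iff $(0,-i',j)\in\im(\sigma)$, i.e.\ iff there is $\xi\in\mathscr{L}(V^1,V^2)$ with $B'\xi = \xi B$, $\xi i = i'$, and $j'\xi = j$. In the Crawley--Boevey picture $\mathbb{M} = T^*\on{Rep}(Q^{\on{CB}},\alpha)$ of Section~\ref{sec:CB quiver}, where a point of $\mu\inv(0)$ is a module over $\Pi^0(Q^{\on{CB}})$ of dimension vector $\alpha$ with $\alpha_\infty = 1$, these three equations say precisely that $(\xi,1)$ (with $1$ at the vertex $\infty$) is a homomorphism of such modules. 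Hence, writing $M, M'$ for the modules attached to the two points, $\bar s$ vanishes at $(M,M')$ iff there is a homomorphism $\phi\colon M\to M'$ with $\phi_\infty = 1$. Now both $M$ and $M'$ are $\theta$-stable, and the Crawley--Boevey construction is arranged so that $\theta(\alpha) = 0$; consequently every \emph{nonzero} homomorphism between them is an isomorphism (if $\phi$ had a proper nonzero kernel or image, stability of $M$, respectively $M'$, would force a subrepresentation of strictly positive $\theta$-degree to have $\theta$-degree $\le 0$). A homomorphism with $\phi_\infty = 1$ is nonzero, hence an isomorphism; conversely any isomorphism $M\cong M'$ has nonzero scalar $\infty$-component, so may be rescaled to have $\phi_\infty = 1$. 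Since isomorphism of these modules is exactly the relation of lying in a common $\mathbb{G}$-orbit, $Z(\bar s)$ is the set of pairs with $\mathbb{G}\cdot[B,i,j] = \mathbb{G}\cdot[B',i',j']$, as claimed.

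For smoothness, the previous paragraph identifies $Z(\bar s)$ (as a subset, hence with its reduced structure) with the preimage of the diagonal $\Delta_{\Ms}$ under the quotient map $q\times q\colon \mu\inv(0)^{\on{s}}\times\mu\inv(0)^{\on{s}}\to\Ms\times\Ms$; equivalently, $Z(\bar s)$ is the total space of the principal $\mathbb{G}\times\mathbb{G}$-bundle obtained by pulling $q\times q$ back along $\Delta_{\Ms}\cong\Ms$. Since $\mathbb{G}$ and $\Ms$ are smooth, so is $Z(\bar s)$. (Alternatively, $(g,M)\mapsto(g\cdot M, M)$ is an immersion of $\mathbb{G}\times\mu\inv(0)^{\on{s}}$ into $\mu\inv(0)^{\on{s}}\times\mu\inv(0)^{\on{s}}$ with smooth source and image $Z(\bar s)$, using that $\mathbb{G}$ acts freely and that $\mu\inv(0)^{\on{s}}$ is smooth because $\mu$ is submersive on the stable locus.) The one genuinely non-formal ingredient is the Schur-type lemma for $\theta$-stable representations together with the observation $\theta(\alpha) = 0$ built into the Crawley--Boevey construction, which is what permits comparing $M$ and $M'$ as stable objects of equal total weight; the rest is unwinding the definitions of $\sigma$, $s$, and the $\mathbb{G}$-action.
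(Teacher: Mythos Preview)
Your argument for (1) and for the set-theoretic identification in (2) is correct and follows the same strategy as the paper: interpret the vanishing of $\bar s$ as the existence of a module homomorphism (the paper does this via Proposition~\ref{prop:complex-features}, the compactified version of your Crawley--Boevey argument), then invoke Schur's lemma for stable objects of equal slope.

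There is, however, a gap in your smoothness argument. You prove that $Z(\bar s)$ \emph{with its reduced structure} is smooth (as a principal $\mathbb{G}\times\mathbb{G}$-bundle over $\Ms$), and you flag this explicitly by writing ``as a subset, hence with its reduced structure.'' But the proposition concerns the \emph{scheme-theoretic} zero locus: you have not shown that $Z(\bar s)$ is reduced, i.e.\ that $\bar s$ meets the zero section transversally. The paper's re-proof (Theorem~\ref{resolution-prop}(2)) handles exactly this point by a first-order deformation computation: linearizing $\partial_0$ at a point of the diagonal and showing that any direction in which the linearization degenerates produces, via Proposition~\ref{prop:complex-features}(1), an intertwiner $\on{Id}+\hbar\phi$ between the deformed modules, hence is tangent to the diagonal. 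Alternatively, you could close the gap with a dimension count: check that the codimension of $Z(\bar s)_{\mathrm{red}}$ equals $\on{rk}\,\coker(\sigma)$, so $Z(\bar s)$ is a local complete intersection in a smooth ambient, hence Cohen--Macaulay; being generically reduced (its reduction is smooth of the correct dimension), it is reduced. Either way, some additional work is required beyond what you have written.
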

For the proofs of these propositions when the character is the one used in \cite{Nakajima Duke}, see \cite[p. 537 and Lemma 5.2]{Nakajima Duke}. We reprove the assertions in general in Proposition \ref{prop:complex-features} and Theorem \ref{resolution-prop}.

We now want to translate  the above in terms of the Crawley-Boevey quiver $Q^{\on{CB}}$.  Consider framed representations $[B,i,j], [B',i',j'] \in \mu\inv(0)^{\on{s}}\times\mu\inv(0)^{\on{s}}$, acting on the vector spaces $(V^1, W)$ and $(V^2, W)$ (both with associated dimensions $\mathbf{v}, \mathbf{w}$).  
We write 
$B^{\on{CB}}$, $(B')^{\on{CB}}$ for the associated representations of the preprojective algebra $\Pi^0(Q^{\on{CB}})$, and
$(V^\ell)^{\on{CB}}$ for their underlying vector spaces. Thus, one has 
\bd
(V^\ell)^{\on{CB}}_j = \begin{cases} V^\ell_i & \text{if $j=i\in I$};\\
\mathbb{C} & \text{if $j=\infty$}.\end{cases}
\ed 
Now
\begin{align}\label{CB1}
L\big((V^1)^{\on{CB}}, (V^2)^{\on{CB}}\big)  & = L(V^1, V^2) \oplus \Hom(\mathbb{C},\mathbb{C}),\\
\label{CB2}
E((V^1)^{\on{CB}}, (V^2)^{\on{CB}})  & = E(V^1, V^2)\oplus L(W, V^2)\oplus L(V^1, W).
\end{align}
The following is immediate from \eqref{CB1}, \eqref{CB2}, and Proposition \ref{prop:Nakajima-section}:
\begin{prop}\label{identification of partialzero}
\mbox{}
\begin{enumerate}
\item Under the identifications of \eqref{CB1}, \eqref{CB2}, the map 
\bd
L\big((V^1)^{\on{CB}}, (V^2)^{\on{CB}}\big) = L(V^1,V^2) \oplus\mathbb{C} \xrightarrow{\sigma \oplus s} E(V^1,V^2)\oplus L(W,V^2)\oplus L(V^1,W) = E((V^1)^{\on{CB}}, (V^2)^{\on{CB}})
\ed
is identified with the map
\bd
\partial_0: L\big((V^1)^{\on{CB}}, (V^2)^{\on{CB}}\big)\longrightarrow E((V^1)^{\on{CB}}, (V^2)^{\on{CB}})
\ed
defined by $\partial_0(\phi) = (B')^{\on{CB}}\phi - \phi B^{\on{CB}}$.  
\item Thus, for the dual map 
\bd
\partial_0^\vee: \mathscr{E}((V^1)^{\on{CB}}, (V^2)^{\on{CB}})  \longrightarrow  \mathscr{L}\big((V^1)^{\on{CB}}, (V^2)^{\on{CB}}\big)
\ed
 we have that $\on{coker}(\partial_0^\vee)$ is the direct image to $\mu\inv(0)^{\on{s}}\times\mu\inv(0)^{\on{s}}$ of a line bundle on the smooth subvariety of part (2) of Proposition \ref{prop:Nakajima-section}.
 \end{enumerate}
\end{prop}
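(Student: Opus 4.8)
The plan is to prove the two parts by different means: part (1) is a bookkeeping computation unwinding the Crawley-Boevey construction of Section~\ref{sec:CB quiver}, while part (2) is a short homological argument --- a dualization followed by the tail of a Koszul complex --- resting on Propositions~\ref{prop:Nakajima-maps} and \ref{prop:Nakajima-section}.

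For (1), I would first spell out the preprojective representations $B^{\on{CB}}$ and $(B')^{\on{CB}}$ of $Q^{\on{CB}}$ explicitly in terms of $[B,i,j]$ and $[B',i',j']$: on the arrows of $\Omega\subset\Omega^{\on{CB}}$ and their reverses they carry the data $B$ and $B'$; on an arrow $a_{(k,m)}\colon\infty\to k$ they carry the $m$-th components $\mathbb{C}\to V^1_k$ and $\mathbb{C}\to V^2_k$ of $i_k$ and $i'_k$; and on the reverse arrow $\overline{a_{(k,m)}}\colon k\to\infty$ the $m$-th components of $j_k$ and $j'_k$ (with signs dictated by the cotangent identification $\mathbb{M}(\mathbf{v},\mathbf{w})\cong T^*\on{Rep}(Q^{\on{CB}},\alpha)$ of Section~\ref{sec:CB quiver}). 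Writing $\phi=(\xi,\lambda)$ according to the splitting \eqref{CB1}, I would then decompose $\partial_0(\phi)=(B')^{\on{CB}}\phi-\phi B^{\on{CB}}$ along the partition $H^{\on{CB}}=H\sqcup\{a_{(k,m)}\}\sqcup\{\overline{a_{(k,m)}}\}$ underlying \eqref{CB2}. The $H$-block is $B'\xi-\xi B$, the $E(V^1,V^2)$-component of $\sigma(\xi)$ in \eqref{sigma-tau}. The $a_{(k,m)}$-block, collected over $m$ and $k$, lands in $L(W,V^2)$; its $\xi$-contribution is the component $-\xi i$ of $\sigma(\xi)$, and its $\lambda$-contribution is $\lambda$ times the $L(W,V^2)$-entry of $s$ from \eqref{section-s} (this term appears because $\phi_\infty$ is scalar multiplication by $\lambda$ composed with the $a_{(k,m)}$-part, namely $i'$, of $(B')^{\on{CB}}$). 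The $\overline{a_{(k,m)}}$-block similarly reproduces the $L(V^1,W)$-component of $\sigma(\xi)+\lambda s$. Since $Q^{\on{CB}}$ has no loop at $\infty$, the target $E((V^1)^{\on{CB}},(V^2)^{\on{CB}})$ has no $\Hom(\mathbb{C},\mathbb{C})$-summand, so these three blocks exhaust it, yielding $\partial_0=\sigma\oplus s$.

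For (2), I would work over $\mu\inv(0)^s\times\mu\inv(0)^s$, where both factors are stable; then Proposition~\ref{prop:Nakajima-maps} shows $\sigma$ is a fibrewise-injective bundle map and $\tau$ is fibrewise surjective, so $\on{coker}(\sigma)$ is locally free, $\mathscr{H}:=\ker(\tau)/\im(\sigma)$ is a vector bundle, and $\sigma^\vee$ is surjective. Dualizing the identification from (1) gives $\partial_0^\vee=(\sigma\oplus s)^\vee$, and using surjectivity of $\sigma^\vee$ to eliminate the $\mathscr{L}(V^1,V^2)^\vee$-summand from the target of $(\sigma\oplus s)^\vee$ yields a canonical isomorphism $\on{coker}(\partial_0^\vee)\cong\on{coker}\bigl(\overline{s}^{\vee}\colon\on{coker}(\sigma)^\vee\to\theo\bigr)$, where $\overline{s}$ is the section of $\on{coker}(\sigma)$ induced by $s$. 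Since $\tau(s)=0$ by Proposition~\ref{prop:Nakajima-section}(1), $\overline{s}$ factors through the subbundle $\mathscr{H}\hookrightarrow\on{coker}(\sigma)$, and dualizing that inclusion identifies the preceding cokernel with $\on{coker}\bigl(\overline{s}^{\vee}\colon\mathscr{H}^\vee\to\theo\bigr)$. Finally, $\mathscr{H}$ has rank $d:=\dim\Ms$; by Proposition~\ref{prop:Nakajima-section}(2) the zero locus $Z:=Z(\overline{s})$ is smooth, and a dimension count --- $Z$ fibres over $\Ms$ with fibre $\mathbb{G}_0\times\mathbb{G}_0$, so $\on{codim}(Z)=d=\on{rk}(\mathscr{H})$ --- shows $\overline{s}$ is a regular section; hence the tail $\mathscr{H}^\vee\xrightarrow{\overline{s}^{\vee}}\theo\to\theo_Z\to 0$ of the Koszul complex is exact and $\on{coker}(\partial_0^\vee)\cong\theo_Z$ is the direct image of a line bundle (here, the structure sheaf, for the $\theo$-linear dual) on the smooth subvariety $Z$ of Proposition~\ref{prop:Nakajima-section}(2). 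As a consistency check, $\on{coker}(\partial_0^\vee)$ computes $\Ext^2_{\Pi^0(Q^{\on{CB}})}$ between the two stable modules, which by $2$-Calabi-Yau duality is a line bundle supported on $Z$.

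I expect the only real obstacle to be bookkeeping: in (1) one must verify that the signs from $\epsilon$ in \eqref{sigma-tau} and from the cotangent identification $\mathbb{M}\cong T^*\on{Rep}(Q^{\on{CB}},\alpha)$ agree with those built into $\sigma$ and $s$ --- this is exactly Crawley-Boevey's normalization, and is why the statement can be called ``immediate''; in (2) the only delicate point is confirming, via the dimension count, that $\overline{s}$ is a regular section. Everything else is formal.
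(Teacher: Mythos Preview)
Your proposal is correct. The paper gives no proof beyond declaring the proposition ``immediate from \eqref{CB1}, \eqref{CB2}, and Proposition \ref{prop:Nakajima-section},'' and your unpacking of part (1) is exactly the bookkeeping that phrase encodes.

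For part (2), your argument is sound but more elaborate than necessary. Once you have established $\on{coker}(\partial_0^\vee)\cong\on{coker}\bigl(\overline{s}^{\vee}\colon\on{coker}(\sigma)^\vee\to\theo\bigr)$, you are done: by definition of the scheme-theoretic zero locus, this cokernel is $\theo_{Z(\overline{s})}$, and Proposition \ref{prop:Nakajima-section}(2) already asserts that $Z(\overline{s})$ is smooth. The passage to the subbundle $\mathscr{H}$, the dimension count establishing regularity of $\overline{s}$, and the Koszul-complex tail are all correct but superfluous---they verify that the zero scheme has the expected codimension, but smoothness (which includes reducedness and the right dimension) is given to you directly. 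Your closing Calabi--Yau remark is a nice conceptual check but likewise not needed for the proof.
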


\section{Graded Tripled Quivers and Their Moduli Spaces}
The present section is intended to provide a compactification of the moduli space of representations of the preprojective algebra $\Pi^0(Q)$ associated to a quiver $Q$.  For applications to Nakajima quiver varieties associated to a quiver $Q_0$, set $Q = Q_0^{\on{CB}}$, the Crawley-Boevey quiver associated to $Q_0$.
\subsection{Graded Tripling of a Quiver}\label{gradedtriple}
Let $(I,E)$ be a graph, $\alpha\in\mathbb{Z}^I_{\geq 0}$ a dimension vector for $I$.  Fix an orientation $\Omega$ defining a quiver $Q = (I,\Omega)$ as in Section \ref{sec:basicsofquivers}.
Fixing a closed interval $[a,b]\subset\mathbb{Z}$, we define a new quiver associated to $(I,\Omega)$, the {\em graded-tripled quiver}, denoted $\Qgtr$, as follows.  We give $\Qgtr$ the vertex set $I\times [a,b]$ where $I$ is the vertex set of $Q$.  If $E$ is the edge set of $Q$ and $H$ the associated set of pairs of an edge together with an orientation, we give $\Qgtr$ the arrow set
\bd
\big(H\times [a,b-1]\big) \cup \big(I\times [a,b-1]).   
\ed
Thus:
\begin{enumerate}
\item for each $h\in H$, $n\in [a,b-1]$  we have arrows $(h,n)$ with 
\bd
s(h,n) = (s(h), n) \hspace{1em} \text{and} \hspace{1em} t(h,n) = (t(h), n+1);
\ed
\item for each $i\in I$, $n\in [a,b-1]$ we have arrows $(i,n)$  with 
\bd
s(i,n) = (i,n) \hspace{1em} \text{and} \hspace{1em} t(i,n) = (i,n+1).
\ed
\end{enumerate}
  For example, taking $[a,b] = [0,1]$:

\def\tabularxcolumn#1{m{#1}}
\begin{tabularx}{\textwidth}{XXX}
$\ds\xymatrix{ \underset{i_1}{\bullet} \ar@[blue][r]^{h} &\underset{i_2}{\bullet}\ar@[red]@(dr,ur)[]_{k}}$
&
$\ds\xymatrix{ \mbox{}\ar@{-->}[rr] & & \mbox{}}$ 
& 
$\ds\mbox{} \hspace{-6em}\xymatrix{ \overset{(i_1,1)}{\bullet} & & &  \overset{(i_2,1)}{\bullet} \\
& & \\
\underset{(i_1,0)}{\bullet}\ar@[blue][uurrr]_(.2){(h,0)} \ar@[green][uu]^{(i_1,0)} & &  &\underset{(i_2,0)}{\bullet} \ar@[green][uu]_{(i_2,0)} \ar@[red]@(ul,dl)[uu]^{(k,0)}\ar@[red]@{-->}@/_7ex/[uu]_(.3){(\overline{k},0)}\ar@[blue]@{-->}[uulll]_(.75){(\overline{h},0)}
}
$
\end{tabularx}

\begin{remark}
Letting $b\rightarrow\infty$, the constructions extend {\em mutatis mutandis} to the case $[a,\infty)\subset \mathbb{Z}$.
\end{remark}

Given a dimension vector $\alpha$ for $Q$, we define a ``constant dimension vector'' $\algtr$ for $\Qgtr$ by
$\algtr_{i,n} = \alpha_i$ for all $i\in I, n\in [a,b]$.  
\subsection{Relations and Representations}\label{varieties-of-modules}
We will consider $\Qgtr$ as a quiver with relations.  Many of the relations are derived from those for the preprojective algebra $\Pi^0(Q)$.

We fix a decomposition $H = \Omega\sqcup\overline{\Omega}$ as in Section \ref{sec:basicsofquivers}, determining a function $\epsilon$.
\begin{notation}
We write:
\begin{enumerate}
\item $a_{h,n}$  for the generators of $k\Qgtr$ corresponding to arrows $(h,n)$ (where $h\in H, n\in [a,b-1]$);
\item $\pv_{i,n}$ for the generators of $k\Qgtr$ corresponding to arrows $(i,n)$ (where  $i\in I, n\in[a,-1]$).
\end{enumerate}
\end{notation}
\begin{defn}
We write $A := k\Qgtr/I$, where $I$ is the two-sided ideal in the path algebra $k\Qgtr$ generated by the following relations:
\begin{enumerate}
\item $\ds\sum_{h\in H} \epsilon(h)a_{\overline{h},n+1}a_{h,n}$, $n\in [a,b-2]$ (``preprojective relations'').
\item $\pv_{t(h),n+1}a_{h,n} - a_{h,n+1}\pv_{s(h),n}$ for all $n\in [a,b-2]$, $h\in H$.
\end{enumerate}
We note that it is immediate from condition (2) that the elements $\ds\pv_{n} := \sum_{i\in I}\pv_{i,n}$ are actually central in $A$: all other required relations hold trivially in the path algebra of $\Qgtr$.
\end{defn}

We write $\on{Rep}(\Qgtr,\algtr)$ for the space of representations of $\Qgtr$ with dimension vector $\algtr$: thus, fixing an $I\times [a,b]$-graded vector space $\ds V_{\bullet,\bullet} = \bigoplus_{i\in I, n\in [a,b]} V_{i,n}$ with dimension vector $\algtr$, we set
\bd
\on{Rep}(\Qgtr,V_{\bullet,\bullet}) = \left(\bigoplus_{h\in H, n\in [a,b-1]} \Hom(V_{s(h),n}, V_{t(h),n+1})\right) \bigoplus \left(\bigoplus_{i\in I, n\in[a,b-1]}\Hom(V_{i,n}, V_{i,n+1})\right).
\ed
We write $\on{Rep}(\Qgtr,\algtr)$ when $V_{i,n} = \mathbb{C}^{\algtr_{i,n}}$.  We also write
 \bd
\on{Rep}(A,V_{\bullet,\bullet})\subseteq\on{Rep}(\Qgtr,V_{\bullet,\bullet}), \hspace{1em}\text{respectively}\hspace{1em} \on{Rep}(A,\algtr)\subseteq\on{Rep}(\Qgtr,\algtr)
 \ed
  for the closed affine subscheme of representations of $A$ (that is, representations of $\Qgtr$ satisfying the relations generating $I$).  We will write
 $\ds\mathbb{G} = \prod_{i\in I} GL(\alpha_i)$
  for the group associated to $Q$ and dimension vector $\alpha$; then
$\Ggtr \cong \mathbb{G}\times [a,b]$
 naturally acts on the affine schemes $\on{Rep}(A,\algtr)\subseteq\on{Rep}(\Qgtr,\algtr)$. 
 \begin{remark}
 We note that this choice of notation is not entirely consistent with our earlier notation in the context of Nakajima quiver varieties.  When $Q = Q_0^{\on{CB}}$ is the Crawley-Boevey quiver associated to $Q_0$, we will write $\ds\mathbb{G}_0 = \prod_{i\in I_0}GL(\alpha_i)$.
 \end{remark}

Consider $\Pi^0 = \Pi^0(Q)$ as a graded algebra (with all generators corresponding to arrows $h\in H$ in degree $1$).  Let $\Pi^0[\pv]$ be the graded polynomial extension with $\deg(\pv)=1$.  
\begin{lemma}\label{A-computation}
Suppose $V_{\bullet,\bullet}$ is an $I\times [a,b]$-graded vector space.    
Letting $h\in\Pi^0$ act via $\ds\sum_n a_{h,n}\in \on{Rep}(\Qgtr,V_{\bullet,\bullet})$ and $\pv$ act via $\ds\sum_{i,n} \pv_{i,n}\in \on{Rep}(\Qgtr,V_{\bullet,\bullet})$,  the space of graded $\Pi^0[\pv]$-module structures on $V_{\bullet,\bullet}$ is naturally identified with  $\on{Rep}(A,V_{\bullet,\bullet})$. 
\end{lemma}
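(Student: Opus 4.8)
\emph{Proof proposal.} The plan is to unwind both sides of the claimed bijection and observe that they are cut out of the same affine space by the same polynomial equations. Recall first the universal property of the graded polynomial extension: a graded $\Pi^0[\pv]$-module structure on a $\mathbb{Z}$-graded vector space $M=\bigoplus_n M_n$ is the same datum as a graded $\Pi^0$-module structure on $M$ together with a $\Pi^0$-linear endomorphism $\pv\colon M\to M$ of degree $1$. Since $\Pi^0$ contains the orthogonal idempotents $e_i$ $(i\in I)$ with $\sum_i e_i=1$, any $\Pi^0$-module splits as $M=\bigoplus_i e_iM$; together with the $\mathbb{Z}$-grading this yields $M=\bigoplus_{i,n}e_iM_n$. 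To say that the structure is a structure ``on $V_{\bullet,\bullet}$'' is exactly to say $e_iM_n=V_{i,n}$ for all $i,n$ --- in particular $M_n=0$ for $n\notin[a,b]$, and the graded dimension is $\algtr$.

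Next I would unpack the two pieces. Because $\Pi^0=kQ^{\on{dbl}}/\big(\sum_{h\in H}\epsilon(h)\overline{h}h\big)$, a graded $\Pi^0$-module structure on $V_{\bullet,\bullet}$ is a representation of $Q^{\on{dbl}}$ satisfying the preprojective relation, in which the degree-one generator $h=e_{t(h)}h\,e_{s(h)}$ acts by a map $V_{s(h),n}\to V_{t(h),n+1}$, say $a_{h,n}$; such a map is forced to vanish unless the arrow $(h,n)$ of $\Qgtr$ exists, i.e. $n\in[a,b-1]$. Evaluated on $V_{k,n}$, the preprojective relation reads $\sum_{h:\,s(h)=k}\epsilon(h)\,a_{\overline{h},n+1}a_{h,n}=0$; the target $V_{k,n+2}$ can be nonzero only when $n\in[a,b-2]$, so this is exactly relation (1). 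Similarly, a degree-one $\Pi^0$-linear endomorphism $\pv$ must commute with each $e_i$, hence preserves every $\bigoplus_n V_{i,n}$ and restricts to maps $\pv_{i,n}\colon V_{i,n}\to V_{i,n+1}$, again zero unless $n\in[a,b-1]$; the remaining linearity condition $\pv\circ h=h\circ\pv$, tested on the generators $h\in H$ over $V_{s(h),n}$, becomes $\pv_{t(h),n+1}a_{h,n}=a_{h,n+1}\pv_{s(h),n}$, which is meaningful --- and hence imposed --- precisely for $n\in[a,b-2]$; this is relation (2).

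Finally I would assemble the bookkeeping: the collection $(a_{h,n})_{h\in H,\,n\in[a,b-1]}$ together with $(\pv_{i,n})_{i\in I,\,n\in[a,b-1]}$ is by definition a point of $\on{Rep}(\Qgtr,V_{\bullet,\bullet})$, and the conditions found above are by definition the equations cutting out the closed subscheme $\on{Rep}(A,V_{\bullet,\bullet})$. Since everything is polynomial in the matrix entries and plainly $\Ggtr$-equivariant and functorial in $V_{\bullet,\bullet}$, this gives the asserted natural identification of affine schemes. I do not expect any real obstacle --- the lemma is an unwinding of definitions --- the only point deserving care being the index bookkeeping: one must check that the module axioms impose nothing beyond relations (1)--(2), which holds because the ``extra'' instances of the preprojective and commutativity relations (at the top degrees $n=b-1,b$, where the relevant target $V_{\bullet,n+2}$ vanishes) are vacuous. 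It is also worth remarking that the centrality in $A$ of the elements $\pv_n=\sum_i\pv_{i,n}$, noted already after the definition of $A$, is the algebra-level shadow of the centrality of $\pv$ in $\Pi^0[\pv]$.
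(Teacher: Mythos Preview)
Your proposal is correct; the paper states this lemma without proof, evidently regarding it as a routine unwinding of definitions, and your argument supplies exactly that unwinding. The only point worth noting is that your careful bookkeeping at the boundary degrees (checking that the ``extra'' relation instances at $n=b-1,b$ are vacuous) makes explicit what the paper leaves to the reader.
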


\subsection{From $\Pi^0$-Modules to $\Qgtr$-Representations}
Suppose we have a finite-dimensional representation $V = (V_i)_{i \in I}$ of the preprojective algebra $\Pi^0$ of dimension vector $\alpha$.  
\begin{construction}
We obtain a representation  of $A$ on a vector space $V_{\bullet, \bullet}$ of dimension vector $\algtr$ defined by:
\begin{enumerate}
\item setting $V_{i,n} := V_i$ for all $i\in [a,b]$;
\item  defining each $\pv_{i,n}: V_{i,n} = V_i \xrightarrow{\on{id}} V_i = V_{i,n+1}$ to act by shift of $\mathbb{Z}$-grading; and 
\item defining each generator of $A$ corresponding to $h\in H$ to act via $\Pi^0$ followed by grading shift.  
\end{enumerate}
\end{construction}
The construction determines a morphism of algebraic varieties (``induction'')
\bd
\mathsf{Ind}^\circ: \on{Rep}(\Pi^0,V)\longrightarrow \on{Rep}(A, V_{\bullet,\bullet}).
\ed

\bd
\text{Write } \hspace{2em} 
 \mathbb{G} = \prod_i GL(V_{i})\hspace{1em}\text{and}\hspace{1em}  \Ggtr = \prod_{(i,n)\in I\times[a,b]} GL(V_{i,n}) \cong \prod_{n\in [a,b]}\mathbb{G} \hspace{1em} \text{as above,}
\ed with the
diagonal homomorphism $\ds \on{diag}: \mathbb{G}\rightarrow \Ggtr \cong \prod_{n\in [a,b]} \mathbb{G}$.  Then the morphism $\mathsf{Ind}^\circ$ is $(\mathbb{G}, \Ggtr)$-equivariant.  We thus get a natural $\Ggtr$-equivariant morphism
\begin{equation}\label{Ind-map}
\mathsf{Ind}: \Ggtr\times_{\mathbb{G}} \on{Rep}(\Pi^0,V)\longrightarrow \on{Rep}(A, V_{\bullet,\bullet}).
\end{equation}
Thus, given a representation $(a_h: V_{s(h)}\rightarrow V_{t(h)})_{h \in H}$ of $\Pi^0$ on $V$, and $(g_{i,n})\in \Ggtr$, we have
\bd
\mathsf{Ind}\big((g_{i,n}), a_h\big) = (a_{h,n}, \pv_{i,n}) \, \text{where} \,  a_{h,n} = g_{t(h),n+1}a_h g_{s(h),n}\inv \, \text{and} \, 
\pv_{i,n} = g_{i,n+1} g_{i,n}\inv.
\ed

\begin{prop}
The map $\mathsf{Ind}$ of \eqref{Ind-map} defines an open immersion of $\Ggtr\times_{\mathbb{G}} \on{Rep}(\Pi^0,V)$ in $\on{Rep}(A,V_{\bullet,\bullet})$, whose image consists of those $(a_{h,n}, \pv_{i,n})$ for which:
\bd
(\dagger)  \hspace{10em} \text{$\pv_{i,n}$ is an isomorphism for all $n\in [a,b-1]$.} \hspace{12em} \mbox{}
\ed
\end{prop}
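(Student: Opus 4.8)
The approach is to produce an explicit inverse to $\mathsf{Ind}$ over the open subscheme $U\subseteq\on{Rep}(A,V_{\bullet,\bullet})$ cut out by $(\dagger)$; note $U$ is open since $(\dagger)$ is the non-vanishing of the regular functions $\det\pv_{i,n}$. That the image of $\mathsf{Ind}$ lies in $U$ is immediate from the explicit formula preceding the proposition: $\pv_{i,n}=g_{i,n+1}g_{i,n}\inv$ is invertible, and the relations defining $A$ hold on the image by construction. So the real content is that $\mathsf{Ind}$ is an isomorphism onto $U$, and I would prove this by constructing a morphism $\Phi\colon U\to\Ggtr\times_{\mathbb{G}}\on{Rep}(\Pi^0,V)$ and checking $\mathsf{Ind}\circ\Phi=\on{id}_{U}$ and $\Phi\circ\mathsf{Ind}=\on{id}$.

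The construction of $\Phi$ is a \emph{gauge-fixing} step. Given a point $(a_{h,n},\pv_{i,n})$ of $U$, put $g_{i,b}:=\on{id}$ and, descending in $n$, $g_{i,n}:=g_{i,n+1}\,\pv_{i,n}$; each $g_{i,n}$ is invertible (a product of invertible maps) and depends morphically on the input, so $(g_{i,n})$ defines a morphism $U\to\Ggtr$. Acting on the given representation by $(g_{i,n})$ normalizes every $\pv_{i,n}$ to $\on{id}$ \emph{identically} on $U$ (a tautology from the recursion). On the normalized representation, relation (2) of the ideal defining $A$ becomes $a'_{h,n}=a'_{h,n+1}$, so the gauged arrows $a'_{h,n}=g_{t(h),n+1}a_{h,n}g_{s(h),n}\inv$ are independent of $n$; calling their common value $a_{h}$, relation (1) becomes $\sum_{h\in H}\epsilon(h)\,a_{\overline h}a_{h}=0$, i.e.\ $(a_{h})_{h\in H}\in\on{Rep}(\Pi^0,V)$. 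I then set $\Phi(a_{h,n},\pv_{i,n}):=\big[(g_{i,n})\inv,(a_{h})\big]$; the normalization $g_{i,b}=\on{id}$ pins down the gauge, and in any case changing it multiplies $(g_{i,n})$ by $\on{diag}(k)$ for some $k\in\mathbb{G}$, which is absorbed in $\times_{\mathbb{G}}$, so $\Phi$ is a well-defined morphism.

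Next I would check the two compositions. The identity $\mathsf{Ind}\circ\Phi=\on{id}_{U}$ is immediate: by construction the gauge $(g_{i,n})$ carries the input to $\mathsf{Ind}^\circ(a_{h})$, and $\mathsf{Ind}$ applied to $\big[(g_{i,n})\inv,(a_{h})\big]$ undoes this. For $\Phi\circ\mathsf{Ind}=\on{id}$, starting from $[(g_{i,n}),(a_{h})]$ one computes, via the telescoping identity that the recursion applied to $\mathsf{Ind}([(g_{i,n}),(a_{h})])$ returns the gauge $(g_{i,b}g_{i,n}\inv)$, that $\Phi(\mathsf{Ind}([(g_{i,n}),(a_{h})]))=\big[(g_{i,n})\cdot k\inv,\ k\cdot(a_{h})\big]$ with $k=(g_{i,b})_{i}\in\mathbb{G}$ acting in the evident way; this equals $[(g_{i,n}),(a_{h})]$ in the balanced product. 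This exhibits $\mathsf{Ind}$ as an isomorphism onto the open subscheme $U$, i.e.\ the locus $(\dagger)$, so in particular $\mathsf{Ind}$ is an open immersion.

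The step I expect to demand the most care is keeping everything scheme-theoretic rather than merely set-theoretic: one has to verify that after gauging $\pv'_{i,n}=\on{id}$ holds \emph{identically} (not just on closed points), so that relation (2) genuinely forces $a'_{h,n}$ to be constant in $n$ as a statement about schemes and $\Phi$ really factors through $\on{Rep}(\Pi^0,V)$; together with keeping track of which side $\mathbb{G}$ acts on in $\Ggtr\times_{\mathbb{G}}(-)$, this is the only place a sign or a side can go wrong. The half-infinite case $[a,\infty)$ is handled the same way, recursing \emph{upward} from $g_{i,a}=\on{id}$ via $g_{i,n+1}=g_{i,n}\,\pv_{i,n}\inv$.
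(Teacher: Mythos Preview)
Your proof is correct and takes essentially the same approach as the paper: both construct an explicit inverse to $\mathsf{Ind}$ over the open locus $(\dagger)$ by a gauge-fixing recursion on the $\pv_{i,n}$, with the only cosmetic difference that the paper normalizes at the bottom (setting $g_{i,a}=\on{Id}$ and recursing upward) while you normalize at the top ($g_{i,b}=\on{id}$, recursing downward). Your write-up is in fact somewhat more thorough than the paper's, which simply asserts the construction is ``evidently inverse'' without separately checking both compositions or flagging the scheme-theoretic point.
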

\begin{proof}
The condition $(\dagger)$ is clearly an open condition.  Given $(a_{h,n}, \pv_{i,n})$ satisfying $(\dagger)$, define
\bd
a_h := \pv_{t(h),a}\inv a_{h,a},  g_{i,a} := \on{Id}_i, g_{i,n} := \pv_{i,n-1}\pv_{i,n-2}\dots \pv_{i,a} \, \text{for $n\geq a+1$}.
\ed
Inductively applying the identity $e_{t(h), a+1} a_{h,a} = a_{h,a+1}e_{s(h),a}$, one calculates that 
$(g_{i,n})\cdot \mathsf{Ind}^\circ(a_h) = (a_{h,n}, \pv_{i,n})$.  This construction
$(a_{h,n}, \pv_{i,n})\mapsto \big((g_{i,n}), a_h)\in \Ggtr\times_{\mathbb{G}} \on{Rep}(\Pi^0,V)$ 
 is evidently inverse to $\mathsf{Ind}$ on the locus of those $(a_{h,n}, \pv_{i,n})$ that satisfy the condition $(\dagger)$.
\end{proof}
\begin{corollary}
The morphism of quotient stacks
\bd
 \mathsf{Ind}^\circ:  \on{Rep}(\Pi^0,V)/\mathbb{G}\longrightarrow \on{Rep}(A,V_{\bullet,\bullet})/\Ggtr
\ed
is an open immersion.
\end{corollary}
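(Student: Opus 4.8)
The plan is to deduce the corollary formally from the preceding proposition, which already provides the essential geometric input: the $\Ggtr$-equivariant morphism $\mathsf{Ind}$ of \eqref{Ind-map} is an open immersion of varieties onto the $\Ggtr$-stable open locus $(\dagger)\subseteq\on{Rep}(A,V_{\bullet,\bullet})$. The only additional ingredient is the standard identification of quotient stacks under induction of structure group. Concretely, for the diagonal closed embedding $\on{diag}:\mathbb{G}\hookrightarrow\Ggtr$ and the $\mathbb{G}$-variety $\on{Rep}(\Pi^0,V)$, the $\mathbb{G}$-equivariant map $\on{Rep}(\Pi^0,V)\to \Ggtr\times_{\mathbb{G}}\on{Rep}(\Pi^0,V)$, $x\mapsto(\on{Id},x)$, induces an isomorphism of quotient stacks
\bd
\on{Rep}(\Pi^0,V)/\mathbb{G}\ \xrightarrow{\ \sim\ }\ \big(\Ggtr\times_{\mathbb{G}}\on{Rep}(\Pi^0,V)\big)/\Ggtr,
\ed
since $\Ggtr\times_{\mathbb{G}}(-)$ on $\mathbb{G}$-torsors is inverse to restriction of structure group along $\on{diag}$. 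Moreover, by the explicit description of $\mathsf{Ind}$ given before \eqref{Ind-map}, the original map $\mathsf{Ind}^\circ$ factors as $\on{Rep}(\Pi^0,V)\xrightarrow{x\mapsto(\on{Id},x)}\Ggtr\times_{\mathbb{G}}\on{Rep}(\Pi^0,V)\xrightarrow{\mathsf{Ind}}\on{Rep}(A,V_{\bullet,\bullet})$, with the first arrow inducing precisely the displayed induction isomorphism on stacks.

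It then remains to invoke the elementary fact that a $\Ggtr$-equivariant open immersion $j:Y\hookrightarrow Y'$ of $\Ggtr$-varieties induces an open immersion $Y/\Ggtr\hookrightarrow Y'/\Ggtr$ of quotient stacks. Indeed, the induced morphism of stacks is representable (by schemes), and its base change along the smooth atlas $Y'\to Y'/\Ggtr$ is, using $\Ggtr$-equivariance of $j$, identified with $j$ itself; since being an open immersion is smooth-local on the target, the morphism of stacks is an open immersion. Applying this with $j=\mathsf{Ind}$ and composing with the induction isomorphism above shows that $\mathsf{Ind}^\circ$ is an open immersion, whose image is the open substack of $\on{Rep}(A,V_{\bullet,\bullet})/\Ggtr$ on which all $\pv_{i,n}$, $n\in[a,b-1]$, are isomorphisms.

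I do not anticipate any genuine obstacle: the mathematical content is already in the previous proposition, and what is left is routine stack bookkeeping. The one point that deserves a moment's care is checking that the morphism of stacks induced by $\mathsf{Ind}^\circ$ really does coincide with $(\mathsf{Ind})/\Ggtr$ after the induction isomorphism — but this is exactly the factorization recorded in the first paragraph, so the verification is immediate.
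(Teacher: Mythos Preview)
Your proposal is correct and matches the paper's intent: the corollary is stated without proof in the paper, as an immediate consequence of the preceding proposition, and your argument spells out exactly the standard stack-theoretic bookkeeping (induction-of-group isomorphism plus descent of equivariant open immersions) that the reader is expected to supply.
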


\begin{remark}\label{extension of V}
We note that if $V_{\bullet,\bullet}$ lies in the open image of $\mathsf{Ind}$, then it uniquely determines an $I\times\mathbb{Z}$-graded $\Pi^0[\pv]$-module $\wt{V}_{\bullet,\bullet}$ with $\wt{V}_{i,n} = \alpha_i$ for {\em all} $n\in\mathbb{Z}$ and $i\in I$.  In other words, $V_{\bullet,\bullet}$ uniquely extends ``upwards and downwards'' to all graded degrees compatibly with the $\Pi^0[\pv]$-action.
\end{remark}

\subsection{Stability for Crawley-Boevey Quivers}\label{stability for CB}
Suppose that $Q_0 = (I_0,\Omega_0)$ is a quiver with dimension vector $\mathbf{v}$ and framing vector $\mathbf{w}$ and that $Q = Q_0^{\on{CB}} = (I,\Omega)$ is the associated Crawley-Boevey quiver, with dimension vector $\alpha$ so that $\alpha_\infty = 1$ and $\alpha|_{I_0} = \mathbf{v}$.  We fix $[a,b] \subset \mathbb{Z}$ and let $\Qgtr$ denote the quiver constructed above from $Q$.  We write
$\algtr$ for the associated dimension vector: thus,
\bd
\algtr_{i,n} = \alpha_i = \begin{cases} \mathbf{v}_i & i\in I_0\\ 1 & i=\infty.\end{cases}
\ed
\begin{center}
{\bf Assume given a nondegenerate stability condition $\theta = \sum \theta_i \delta_i$ for $Q$ with respect to $\alpha$.}
\end{center}
\begin{remark}
In particular, we have $\theta_\infty \neq 0$. 
\end{remark}  

We want to choose a stability condition $\thetagtr$ for $\Qgtr$ with the following properties:
\begin{enumerate}
\item $\thetagtr$ is nondegenerate with respect to $\algtr$.  In particular, the semistable and stable points of $\on{Rep}(\Qgtr,\algtr)$ coincide.
\item If $V$ is a representation associated to a representation of the preprojective algebra $\Pi^0(Q)$, then $V$ is $\thetagtr$-stable if and only if the corresponding $\Pi^0(Q)$-representation is $\theta$-stable.
\end{enumerate}
We first remind the reader that $\delta_{i,n}(M) := \dim(M_{i,n})$; we will write $\theta$ as a linear combination of the $\delta_{i,n}$.  Also, we note that it suffices to construct a {\em rational} linear functional $\thetagtr$, since any positive integer multiple of $\thetagtr$ evidently defines the same stable and semistable loci.  

In our construction of $\thetagtr$, we will want to fix a positive integer 
\begin{equation}\label{bounds}
\ds T \gg 0.
\end{equation}
   We fix an ordering on the vertices of $Q_0$, identifying
$I = \{1, \dots, r\}$.  We write $\thetagtr$ as a sum of terms:
\bd
\theta^{\on{lg}} = T^{r+1}\big[\delta_{\infty,b}-\delta_{\infty,a}\big] + \sum_{i=1}^r T^i\big[\delta_{i,b}-\delta_{i,a}\big],
\hspace{4em}
\theta^{\on{mid}} = \sum_{i\in I} \theta_i \delta_{i,a},
\ed
\bd 
\text{and} \hspace{4em} 
\theta^{\on{sm}} = -\sum_{i=1}^r T^{-i}\delta_{i,a} + T^{-r-1} \sum_{(i,n)\in I\times (a,b)} \delta_{i,n}.
\ed
Finally, we write $C := \theta^{\on{lg}}(\algtr) + \theta^{\on{mid}}(\algtr) + \theta^{\on{sm}}(\algtr)$ and write
\bd
\thetagtr := \theta^{\on{lg}} + \theta^{\on{mid}} + \theta^{\on{sm}} - C\delta_{\infty,a}.  
\ed
We note that $\theta^{\on{lg}}(\algtr) =0$, so $C$ is bounded independent of $T$.  
Also, since $\delta_{\infty, a}(\algtr) = 1$, we get $\thetagtr(\algtr)=0$.  
\begin{lemma}\label{semistable equals stable}
For fixed dimension vector $\alpha$ (and thus $\algtr$) and choices as in \eqref{bounds}, 
\bd
\thetagtr(M)\neq 0 \hspace{1em}\text{for} \hspace{1em}   0\not\subseteq M\not\subseteq V.
\ed
\end{lemma}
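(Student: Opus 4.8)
The assertion is precisely that $\thetagtr$ is nondegenerate with respect to $\algtr$ in the sense of Definition~\ref{nondegen}: the dimension vector $\beta = (\beta_{i,n})$ of a subrepresentation $0 \subsetneq M \subsetneq V$ satisfies $0 \le \beta_{i,n} \le \algtr_{i,n} = \alpha_i$, $\beta \ne 0$ and $\beta \ne \algtr$, so it suffices to prove $\thetagtr(\beta) \ne 0$ for every such dimension vector. The key move is that, for fixed $\beta$, the number $\thetagtr(\beta)$ is a Laurent polynomial $p_\beta(T) \in \mathbb{Q}[T,T^{-1}]$ in the auxiliary parameter $T$ of \eqref{bounds}. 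Since there are only finitely many dimension vectors $\beta$ with $\beta \le \algtr$, and a nonzero Laurent polynomial has finitely many roots, it is enough to show that $p_\beta \ne 0$ as a Laurent polynomial whenever $0 \ne \beta < \algtr$; then ``$T \gg 0$'' in the statement is read as: any positive integer outside the resulting finite bad set.

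To see $p_\beta \ne 0$ I would read off a handful of coefficients. From $\theta^{\on{lg}}$, the coefficient of $T^{r+1}$ in $p_\beta$ is $\beta_{\infty,b} - \beta_{\infty,a}$ and, for $1 \le i \le r$, the coefficient of $T^i$ is $\beta_{i,b} - \beta_{i,a}$; the shift $-C\delta_{\infty,a}$ contributes nothing in these degrees because $\theta^{\on{lg}}(\algtr) = 0$. Since $\theta^{\on{mid}}$ lives in degree $0$ while $\theta^{\on{sm}}$ and $-C\delta_{\infty,a}$ live in degrees $\le 0$ (and $C$ has $T^0$-coefficient $\theta(\alpha) = 0$, by the standing normalization of $\theta$ coming from the Crawley--Boevey description in Section~\ref{sec:semistability}), the coefficient of $T^0$ in $p_\beta$ is exactly $\theta^{\on{mid}}(\beta) = \sum_{i \in I}\theta_i \beta_{i,a} = \theta(\beta_{\cdot,a})$, writing $\beta_{\cdot,a} := (\beta_{i,a})_{i \in I}$. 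Finally, the coefficient of $T^{-r-1}$ is $\sum_{(i,n)\,:\,n \in (a,b)} (\beta_{i,n} - \beta_{\infty,a}\,\alpha_i)$, using that the $T^{-r-1}$-coefficient of $C$ is $\sum_{(i,n)\,:\,n\in(a,b)}\alpha_i$.

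Now suppose $p_\beta = 0$, so all of these coefficients vanish. Vanishing of the $T^{r+1}$- and $T^i$-coefficients forces $\beta_{i,b} = \beta_{i,a}$ for all $i \in I$; set $\gamma := \beta_{\cdot,a} = \beta_{\cdot,b}$, a dimension vector for $Q$ with $0 \le \gamma \le \alpha$. Vanishing of the $T^0$-coefficient gives $\theta(\gamma) = 0$, so nondegeneracy of $\theta$ with respect to $\alpha$ forces $\gamma = 0$ or $\gamma = \alpha$. If $\gamma = 0$, then $\beta_{\infty,a} = 0$, so the $T^{-r-1}$-coefficient is $\sum_{(i,n)\,:\,n\in(a,b)}\beta_{i,n}$, a sum of nonnegative integers; its vanishing forces $\beta_{i,n} = 0$ for all $i$ and all $n \in (a,b)$, whence $\beta = 0$, a contradiction. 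If $\gamma = \alpha$, then $\beta_{\infty,a} = \alpha_\infty = 1$, so the $T^{-r-1}$-coefficient is $\sum_{(i,n)\,:\,n\in(a,b)}(\beta_{i,n} - \alpha_i)$, a sum of nonpositive integers; its vanishing forces $\beta_{i,n} = \alpha_i$ for all $i$ and all $n \in (a,b)$, whence $\beta = \algtr$, again a contradiction. Hence $p_\beta \ne 0$.

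The mathematical content is the design of $\thetagtr$, and the only thing to watch is the \emph{order in which its three pieces act}: $\theta^{\on{lg}}$, with its tower $T^{r+1} \gg T^r \gg \cdots \gg T$, detects \emph{any} discrepancy between the level-$a$ and level-$b$ restrictions of $\beta$; once those agree, $\theta^{\on{mid}}$ imports the nondegeneracy of the original $\theta$ to pin $\gamma$ down to $0$ or $\alpha$; and the ``interior'' summand $T^{-r-1}\sum_{(i,n)\,:\,n\in(a,b)}\delta_{i,n}$ of $\theta^{\on{sm}}$ rules out a $\beta$ that equals $0$, respectively $\algtr$, on the two boundary levels but is not constant in between. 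The point I would check most carefully when writing this up is that the normalizing shift $-C\delta_{\infty,a}$ (whose only purpose is $\thetagtr(\algtr) = 0$) contaminates neither the degree-$0$ nor the positive-degree coefficients of $p_\beta$ — this is exactly where $\theta^{\on{lg}}(\algtr) = 0$ and $\theta(\alpha) = 0$ are used. (If $b = a+1$ the interior index set is empty, the last case is vacuous, and the statement is immediate from $\theta^{\on{lg}}$ and $\theta^{\on{mid}}$ alone.)
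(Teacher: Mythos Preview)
Your proof is correct and follows the same overall strategy as the paper: treat $\thetagtr(\beta)$ as a Laurent polynomial in $T$ and kill it coefficient by coefficient, using the positive-degree terms to force $\beta_{i,a}=\beta_{i,b}$ and the $T^{-r-1}$-term to handle the interior levels.

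There is one minor difference worth noting. To pin down the common boundary vector $\gamma=\beta_{\cdot,a}=\beta_{\cdot,b}$, you read off the $T^0$-coefficient as $\theta(\gamma)$ and invoke the standing nondegeneracy of $\theta$ to conclude $\gamma\in\{0,\alpha\}$. The paper instead splits on $m_{\infty,a}\in\{0,1\}$ and then uses the individual $T^{-i}$-coefficients of $\theta^{\on{sm}}$ (for $i=1,\dots,r$) to read off each $m_{i,a}$ directly; this route never appeals to the nondegeneracy of $\theta$ and so shows that Lemma~\ref{semistable equals stable} holds for \emph{any} $\theta$, not just nondegenerate ones. Your argument is a touch cleaner and makes more transparent use of the hypothesis already in force; the paper's argument is self-contained and explains why the terms $-\sum_i T^{-i}\delta_{i,a}$ were built into $\theta^{\on{sm}}$ in the first place.
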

\begin{proof}
Assume that $\thetagtr(M) = 0$.  Write $m_{i,n}:= \delta_{i,n}(M) = \dim(M_{i,n})$.  Since the coefficients of $T$ in $\thetagtr$ are bounded independent of $T$, 
we conclude that each $T$-coefficient of $\thetagtr(M)$ must vanish.  In particular, $\theta^{\on{lg}}(M) = 0$ and thus $m_{i,b}=  m_{i,a}$ for all $i\in I$.

Since $m_{\infty,a} \in \{0,1\}$, we consider the two cases:

\vspace{.4em}

\noindent
{\em Case 1.}\, $m_{\infty, a} = m_{\infty,b} =  0$.  \; \;  In this case $0 = \thetagtr(M) = \theta^{\on{mid}}(M) +\theta^{\on{sm}}(M)$, and again for $T\gg 0$ each coefficient of $T$ must vanish.  From $\theta^{\on{sm}}(M) = 0$ we get
$m_{i,a}= 0$ for all $i\in I_0$, and $\ds\sum_{(i,n)\in I\times (a,b)} m_{i,n} =0$ implying $m_{i,n} = 0$ for $(i,n)\in I\times(a,b)$.  Combined with the equality $m_{i,b}=  m_{i,a}$ for all $i\in I$ from above, we conclude $M=0$.

\vspace{.4em}

\noindent
{\em Case 2.}\, $m_{\infty, a} = m_{\infty,b} = 1$.  \;\;  Then
\bd
0  =\thetagtr(M) = \theta^{\on{mid}}(M) + \theta^{\on{sm}}(M) - \big[\theta^{\on{mid}}(\algtr) +\theta^{\on{sm}}(\algtr)].
\ed
Again, considering term-by-term in powers of $T$, we find that $m_{i,a} = \alpha_{i,a}$ for $i\in I_0$; and then 
$\ds\sum_{(i,n)\in I\times (a,b)} m_{i,n} = \ds\sum_{(i,n)\in I\times (a,b)} \algtr_{i,n}$ implying (since $m_{i,n}\leq \algtr_{i,n}$) that $m_{i,n} = \algtr_{i,n}$ for $(i,n)\in I\times (a,b)$.  Combined with the equality $m_{i,b}=  m_{i,a}$ for all $i\in I$, we conclude that  $m_{i,n} = \algtr_{i,n}$ for all $(i,n)\in I\times [a,b]$, i.e. $M=V$.
\end{proof}

\begin{prop}\label{prop:quiver-moduli}
With respect to $\thetagtr$ as above, we have:
\mbox{}
\begin{enumerate}
\item The semistable and stable loci of $\on{Rep}(\Qgtr,\algtr)$ coincide, as do those of $\on{Rep}(A,\algtr)$.
\item Every stable point of $\on{Rep}(\Qgtr,\algtr)$ is generated as an $A$-module in degree $a$.
\item If $V_{\bullet,\bullet}$, $W_{\bullet,\bullet}$ are vector spaces with dimension vector $\algtr$, equipped with $A$-module structures making them stable, then $\Hom_{A}(V_{\bullet,\bullet}, W_{\bullet,\bullet})$ is $1$-dimensional if $V_{\bullet,\bullet}$ and $W_{\bullet,\bullet}$ are isomorphic as $A$-modules and is $0$-dimensional otherwise.  
\item For a representation $V$ of $\Pi^0$ of dimension vector $\alpha$, $V$ is stable with respect to $\theta$ if and only if $\mathsf{Ind}^\circ(V) \in \on{Rep}(A,\algtr)$ is stable with respect to $\thetagtr$.
\end{enumerate}
\end{prop}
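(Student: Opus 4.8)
The plan is to prove the four parts in the order stated; parts (1) and (3) come quickly from Lemma~\ref{semistable equals stable} together with King's numerical criterion and a Schur-type argument, while (4) carries the real content. For (1): since $\thetagtr(\algtr)=0$, King's criterion (Proposition~3.1 of \cite{King}, recalled in Section~\ref{sec:semistability}) says a $\Qgtr$-representation $V$ of dimension $\algtr$ is $\thetagtr$-semistable, resp.\ stable, iff $\thetagtr(M)\ge 0$, resp.\ $>0$, for every $M$ with $0\subsetneq M\subsetneq V$; the same criterion, read off $A$-submodules, governs (semi)stability in the closed $\Ggtr$-invariant subscheme $\on{Rep}(A,\algtr)$, because a $\Qgtr$-subrepresentation of a point of $\on{Rep}(A,\algtr)$ automatically satisfies the relations, so the subobjects to be tested are exactly the $A$-submodules. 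Lemma~\ref{semistable equals stable} gives $\thetagtr(M)\ne 0$ for all such $M$, so the semistable and stable loci coincide in both spaces. For (3): if $\phi\colon V_{\bullet,\bullet}\to W_{\bullet,\bullet}$ is a nonzero map of $\thetagtr$-stable $A$-modules of dimension $\algtr$, then $\ker\phi\subsetneq V_{\bullet,\bullet}$ and $0\ne\im\phi\subseteq W_{\bullet,\bullet}$ are $A$-submodules and $\thetagtr$ of both ambient modules is $0$; if $\ker\phi\ne 0$, stability of $V_{\bullet,\bullet}$ gives $\thetagtr(\ker\phi)>0$, hence $\thetagtr(\im\phi)=-\thetagtr(\ker\phi)<0$, impossible for a submodule of the stable $W_{\bullet,\bullet}$; so $\phi$ is injective, $\im\phi\cong V_{\bullet,\bullet}$ has $\thetagtr=0$, and stability of $W_{\bullet,\bullet}$ then forces $\im\phi=W_{\bullet,\bullet}$. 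Thus nonzero maps between such modules are isomorphisms, so $\Hom_A(V_{\bullet,\bullet},W_{\bullet,\bullet})=0$ when $V_{\bullet,\bullet}\not\cong W_{\bullet,\bullet}$, while $\End_A(V_{\bullet,\bullet})$ is a finite-dimensional division algebra over $\mathbb C$, hence equals $\mathbb C$.

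For (2), given a $\thetagtr$-stable $x\in\on{Rep}(\Qgtr,\algtr)$, let $M$ be the submodule of $V_{\bullet,\bullet}$ generated by the degree-$a$ part $\bigoplus_{i\in I}V_{i,a}$. Then $M_{i,a}=V_{i,a}$ for all $i$, so $N:=V_{\bullet,\bullet}/M$ is supported in degrees $>a$; and $M\ne 0$ since $V_{\infty,a}=\mathbb C\subseteq M$, so $N$ is a proper quotient of $V_{\bullet,\bullet}$. Now $\theta^{\on{mid}}(N)=0$, while $\theta^{\on{lg}}(N)$ and $\theta^{\on{sm}}(N)$ are nonnegative combinations of the $\dim N_{i,n}$ with $n>a$, strictly positive unless $N=0$; hence $\thetagtr(N)\ge 0$, with equality only for $N=0$. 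But stability of $x$ makes $\thetagtr$ strictly negative on every nonzero proper quotient of $V_{\bullet,\bullet}$, so $N=0$, i.e.\ $x$ is generated in degree $a$.

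For (4), write $V_{\bullet,\bullet}=\mathsf{Ind}^\circ(V)$; there each $\pv_{i,n}$ acts as the identity $V_i\to V_i$ and each $a_{h,n}$ is the $\Pi^0$-action of $V$. The first step is the structural observation that an $A$-submodule of $V_{\bullet,\bullet}$ is exactly a chain $M^{(a)}\subseteq M^{(a+1)}\subseteq\cdots\subseteq M^{(b)}$ of $I$-graded subspaces of $V$ (with $M^{(n)}_i=M_{i,n}$, nested because the $\pv$'s are identities) satisfying $a_h(M^{(n)}_{s(h)})\subseteq M^{(n+1)}_{t(h)}$ for all $h,n$; in particular, a \emph{constant} chain is the same thing as a $\Pi^0$-subrepresentation $V'\subseteq V$. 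Assuming $V$ is $\theta$-stable, I would show every proper nonzero such $M$ has $\thetagtr(M)>0$. If the chain is non-constant, then $\dim M_{j,b}>\dim M_{j,a}$ for some $j$, so $\theta^{\on{lg}}(M)$, being a sum of nonnegative multiples of the powers $T^i$ ($1\le i\le r+1$) with at least one strictly positive, is $\ge T$ for $T\ge 1$ and hence for $T\gg 0$ dominates the $O(1)$ and $O(T^{-1})$ terms $\theta^{\on{mid}}(M)$, $\theta^{\on{sm}}(M)$, $-C\dim M_{\infty,a}$; thus $\thetagtr(M)>0$. If the chain is constant, equal to a $\Pi^0$-subrepresentation $V'$, then $\theta^{\on{lg}}(M)=0$, and — using that $\theta^{\on{mid}}(\algtr)+\theta^{\on{sm}}(\algtr)$ occurs in $C$ and that $\theta(\alpha)=0$ — a short computation gives $\thetagtr(M)=\theta(V')+O(T^{-1})$, whether $V'_\infty=0$ or $V'_\infty=1$; this is positive for $T\gg 0$ since $\theta$-stability of $V$ makes the integer $\theta(V')$ positive. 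Hence $V_{\bullet,\bullet}$ is $\thetagtr$-stable.

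Conversely, if $V$ is not $\theta$-stable, then — $\theta$ being nondegenerate (Definition~\ref{nondegen}), so that semistability and stability agree for $\Pi^0$-modules, making failure of stability a failure of semistability — there is a proper nonzero $\Pi^0$-subrepresentation $V'$ with $\theta(V')<0$; its constant chain is a proper nonzero $A$-submodule $M$ of $V_{\bullet,\bullet}$ with $\thetagtr(M)=\theta(V')+O(T^{-1})<0$ for $T\gg 0$, so $V_{\bullet,\bullet}$ is not even $\thetagtr$-semistable. I expect the forward direction of (4) to be the main obstacle: showing that \emph{no} $A$-submodule of $\mathsf{Ind}^\circ(V)$ destabilizes requires both the classification of these submodules as chains and the careful tracking of the powers of $T$. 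The non-constant chains are disposed of cheaply by the $\theta^{\on{lg}}$-term, and it is precisely the constant chains where the hypothesis that $V$ is $\theta$-stable is used.
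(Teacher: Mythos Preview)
Your proof is correct and follows essentially the same approach as the paper's: (1) from Lemma~\ref{semistable equals stable}, (3) via the standard Schur argument, and (2) and (4) by tracking how the powers of $T$ in $\thetagtr$ separate the constant chains (arising from $\Pi^0$-submodules) from the non-constant ones. Your treatment of (4) is in fact slightly cleaner than the paper's, since observing $C=O(T^{-1})$ (because $\theta(\alpha)=0$) lets you avoid the case split on $\alpha'_\infty\in\{0,1\}$; and your argument for (2) via the quotient $N=V_{\bullet,\bullet}/M$ is the dual of the paper's argument via $M$.
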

\begin{proof}
(1) This is the content of Lemma \ref{semistable equals stable}.

(2)  Supposing $V$ is stable, let $M$ be the subrepresentation generated by $V_{I\times\{a\}}$.  Arguing as in the proof of Lemma \ref{semistable equals stable},  we have that $\theta^{\on{lg}}(M) < 0$, and therefore $V$ is unstable,  unless $m_{i,b} = m_{i,a}$ for all $i\in I$.  We conclude that $m_{i,b} = m_{i,a}$ for all $i\in I$ and hence that
\bd
\thetagtr(M) = \theta^{\on{mid}}(M) + \theta^{\on{sm}}(M) - \big[\theta^{\on{mid}}(\algtr) +\theta^{\on{sm}}(\algtr)].
\ed 
Noting that $\theta^{\on{mid}}(M) = \theta^{\on{mid}}(V)$ by definition
and analyzing $\theta^{\on{sm}}(M) -\theta^{\on{sm}}(V)$ term-by-term in powers of $T$,
we find that $\thetagtr(M)<0$ unless $m_{i,n} = \algtr_{i,n}$ for all $(i,n)\in I\times (a,b)$, and thus stability of $V$ implies $M=V$.  

(3)  is standard.
%%Suppose $\rho: V_{\bullet,\bullet}\rightarrow W_{\bullet, \bullet}$ is a $k\Qgtr$-linear map.  By stability of $V_{\bullet,\bullet}$,  if $\theta$ is nonzero and not an isomorphism then $\theta\big(\on{ker}(\rho)\big) >0$; by linearity of $\theta$, it follows that $\theta\big(\on{coker}(\rho)\big) <0$, contradicting stability of $W_{\bullet, \bullet}$.  Thus any nonzero homomorphism $\rho$ is an isomorphism.  We may thus assume $W_{\bullet, \bullet} = V_{\bullet, \bullet}$.  If $\rho\in\End_A(V_{\bullet,\bullet})$ is nonzero, hence invertible, let $\lambda$ be any nonzero eigenvalue of $\rho$; then $\rho - \lambda\cdot\on{Id}$ is non-invertible, hence is zero.  

(4)  Consider a representation $V$ of $\Pi^0(Q)$.  As before, we write $\algtr$ for the dimension vector of $\mathsf{Ind}^\circ(V)$ where $V$ has dimension vector $\alpha$.   For any sub-representation 
$M\subseteq \mathsf{Ind}^\circ(V)$, write $m_{i,n} = \dim(M_{i,n})$.

Because $\pv_{i,n}: \mathsf{Ind}^\circ(V)_{i,n} \rightarrow \mathsf{Ind}^\circ(V)_{i,n+1}$ is an isomorphism for each $n\in [a,b-1]$, we have, for any sub-representation $M$, that  $m_{i,n+1}\geq \dim m_{i,n}$ for all $(i, n)\in I\times [a,b-1]$.  Analyzing $\thetagtr(M)$ term-by-term in powers of $T$, we conclude that $\thetagtr(M)>0$, and thus $M$ is irrelevant to the stability of $\mathsf{Ind}^\circ(V)$,
 unless $m_{i,b} = m_{i,a}$ for all $i\in I$, i.e., unless $M = \mathsf{Ind}^\circ(V')$ for some $\Pi^0(Q)$-submodule $V'\subseteq V$.  
 
 Thus, suppose $M = \mathsf{Ind}^\circ(V')$ for some $\Pi^0(Q)$-submodule $V'\subseteq V$.  Write $\alpha'$ for the dimension vector of $V'$.  Then
$ \thetagtr(M) = \theta(\alpha') + \theta^{\on{sm}}(M) -\big[\theta(\alpha) + \theta^{\on{sm}}(\algtr)]\alpha'_\infty.$

\vspace{.4em}

\noindent
{\em Case 1.}\,
$\alpha'_\infty = 0$.
 \; \;
In this case, $\thetagtr(M) = \theta(\alpha') + \theta^{\on{sm}}(M)$.  If $\theta(\alpha')<0$, so $V'$ destablizes $V$, then we see that $\thetagtr(M)<0$, so $M$ destabilizes $\mathsf{Ind}^\circ(V)$.  On the other hand if $\theta(\alpha')>0$ then $\thetagtr(M)>0$ as well.  Thus in this case, $V'$ destabilizes $V$ if and only if 
$\mathsf{Ind}^\circ(V')$ destabilizes $\mathsf{Ind}^\circ(V)$.

\vspace{.4em}

\noindent
{\em Case 2.}\,
$\alpha'_\infty = 1$.
 \; \;
Then, as in Case 2 of Lemma \ref{semistable equals stable},
\bd\thetagtr(M) = \theta^{\on{mid}}(M) + \theta^{\on{sm}}(M) - \big[\theta^{\on{mid}}(\algtr) +\theta^{\on{sm}}(\algtr)].
\ed
The leading term in $T$ is $\theta^{\on{mid}}(M)-\theta^{\on{mid}}(\algtr) = \theta(\alpha')$.  Thus $\thetagtr(M)<0$ if and only if $\theta(\alpha')<0$, and so 
$V'$ destabilizes $V$ if and only if 
$\mathsf{Ind}^\circ(V')$ destabilizes $\mathsf{Ind}^\circ(V)$.
  This completes the proof.
\end{proof}

As in \cite[Proposition 4.3]{King}, since $\Qgtr$ has no oriented cycles we obtain a {\em projective} quotient
\bd
\overline{\Ms} := \on{Rep}(A,\algtr)/\!\!/_{\chigtr}\Ggtr.
\ed

\begin{corollary}
The natural map $\on{Ind}: \Ms\rightarrow \overline{\Ms}$ is an open immersion of the quiver variety $\Ms$ in a projective scheme.
\end{corollary}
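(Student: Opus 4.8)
The plan is to assemble the corollary from the pieces already in place: the open immersion of stacks $\on{Ind}^\circ: \on{Rep}(\Pi^0,V)/\mathbb{G} \hookrightarrow \on{Rep}(A,V_{\bullet,\bullet})/\Ggtr$, the compatibility of stability under $\on{Ind}^\circ$ from Proposition \ref{prop:quiver-moduli}(4), and the projectivity of $\overline{\Ms} = \on{Rep}(A,\algtr)/\!\!/_{\chigtr}\Ggtr$. First I would recall that, since $\theta$ is a nondegenerate stability condition for $Q = Q_0^{\on{CB}}$ with respect to $\alpha$, the $\theta$-stable locus of $\on{Rep}(\Pi^0(Q),\alpha)$ is precisely $\mu\inv(0)^{\on{s}}$, and the free $\mathbb{G}_0 = \mathbb{G}/\Gm$-action there realizes $\Ms$ as the geometric quotient; equivalently the stable locus of the stack $\on{Rep}(\Pi^0,V)/\mathbb{G}$ has coarse space $\Ms$. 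Likewise, by Proposition \ref{prop:quiver-moduli}(1) semistability and stability coincide on $\on{Rep}(A,\algtr)$, so (via King \cite{King}) the GIT quotient $\overline{\Ms}$ is a \emph{geometric} quotient of the stable locus $\on{Rep}(A,\algtr)^{\on{s}}$ by $\Ggtr$, and it is projective because $\Qgtr$ has no oriented cycles.

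Next I would check that $\on{Ind}^\circ$ carries stable points to stable points and, conversely, that every stable $A$-module of dimension vector $\algtr$ in the image of $\on{Ind}$ comes from a $\theta$-stable $\Pi^0$-representation. The forward direction is exactly Proposition \ref{prop:quiver-moduli}(4). For the converse: by Proposition \ref{prop:quiver-moduli}(2) every stable point of $\on{Rep}(\Qgtr,\algtr)$ is generated in degree $a$, and I would combine this with the open-immersion description — the image of $\on{Ind}$ is cut out by the condition $(\dagger)$ that each $\pv_{i,n}$ be an isomorphism — to argue that a stable $A$-module of dimension vector $\algtr$ automatically satisfies $(\dagger)$, hence lies in the image of $\on{Ind}$ and therefore descends to a $\Pi^0$-module; stability of that $\Pi^0$-module then follows again from Proposition \ref{prop:quiver-moduli}(4). (Strictly, to conclude $\on{Ind}: \Ms\to\overline{\Ms}$ is \emph{onto the image of the stable locus} one only needs the forward direction plus the fact that $\on{Ind}^\circ$ is an open immersion; the converse is needed only if one wants the stronger statement that $\Ms$ is an open subscheme of $\overline{\Ms}$, which is what is claimed.)

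The construction then gives a commuting square in which the open immersion of stacks $\on{Rep}(\Pi^0,V)^{\theta\text{-}\on{s}}/\mathbb{G} \hookrightarrow \on{Rep}(A,V_{\bullet,\bullet})^{\on{s}}/\Ggtr$ descends to a morphism on coarse moduli spaces $\Ms \to \overline{\Ms}$. I would verify it is an open immersion at the level of schemes by passing to good/geometric quotients: both stable loci are smooth (or at least the relevant one for $\Ms$ is; for $\overline\Ms$ one does not need smoothness), both quotients are geometric (stability $=$ semistability on the $A$-side, free $\mathbb{G}_0$-action on the $\Pi^0$-side), and an open immersion of stable loci that is equivariant for the group actions induces an open immersion of geometric quotients. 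Concretely, $\on{Rep}(\Pi^0,V)^{\theta\text{-}\on{s}}$ is $\Ggtr$-equivariantly identified, via $\on{Ind}$, with a $\Ggtr$-stable open subscheme of $\on{Rep}(A,\algtr)^{\on{s}}$ (namely the locus satisfying $(\dagger)$), and the quotient of that open subscheme is the open subscheme $\Ms$ of $\overline{\Ms}$. Finally I would note projectivity of $\overline{\Ms}$ is immediate from the cited King argument since $\Qgtr$ has no oriented cycles by construction, completing the corollary.

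The main obstacle I anticipate is the careful bookkeeping with the group $\Ggtr$ versus $\mathbb{G}$ and the induced-space presentation $\Ggtr\times_{\mathbb{G}}\on{Rep}(\Pi^0,V)$: one has to make sure that taking the geometric quotient of this $\Ggtr$-variety by $\Ggtr$ really does recover the $\mathbb{G}_0$-quotient $\Ms$ of $\on{Rep}(\Pi^0,V)^{\on{s}}$ — i.e., that the quotient-stack open immersion of the Corollary preceding this statement genuinely descends to coarse spaces and that no stabilizer subtleties intervene (the $\Gm$ inside $\mathbb{G}$ acting trivially has to be tracked, exactly as in Crawley-Boevey's $G(\alpha) = \mathbb{G}/\Gm$ convention from Section \ref{sec:CB quiver}). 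Once that identification is pinned down, everything else is formal consequence of the propositions already proved.
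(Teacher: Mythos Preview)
Your overall approach is the intended one and matches what the paper has in mind: assemble the corollary from the open immersion of stacks, Proposition~\ref{prop:quiver-moduli}(4), and King's projectivity criterion for quivers without oriented cycles. The paper in fact treats this corollary as immediate from those ingredients.

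However, there is a genuine error in your ``converse'' step. You claim that, because stable $A$-modules are generated in degree $a$, a stable $A$-module of dimension vector $\algtr$ automatically satisfies $(\dagger)$ (all $\pv_{i,n}$ invertible). This is false: generation in degree $a$ uses \emph{all} the arrows $a_{h,n}$ and $\pv_{i,n}$ together, and in no way forces each $\pv_{i,n}$ alone to be an isomorphism. If your claim were true, every stable point of $\on{Rep}(A,\algtr)$ would lie in the image of $\on{Ind}$, hence $\Ms=\overline{\Ms}$, and the compactification would be vacuous.

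Fortunately this claim is also unnecessary, and your parenthetical almost gets it right but then confuses the issue. What you actually need is just: (i) $\on{Ind}$ is an open immersion $\Ggtr\times_{\mathbb{G}}\on{Rep}(\Pi^0,V)\hookrightarrow \on{Rep}(A,\algtr)$ with image the $(\dagger)$-locus; (ii) the forward direction of Proposition~\ref{prop:quiver-moduli}(4), that $\theta$-stable implies $\thetagtr$-stable. From (i), the image $\on{Ind}\big(\Ggtr\times_{\mathbb{G}}\on{Rep}(\Pi^0,V)^{\theta\text{-s}}\big)$ is open in $\on{Rep}(A,\algtr)$; from (ii) it lies inside $\on{Rep}(A,\algtr)^{\on{s}}$, hence is open there as well. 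Passing to geometric quotients by $\Ggtr$ (using, as you note, that both quotients are geometric and the $\Gm$-bookkeeping of Section~\ref{sec:CB quiver}) then gives $\Ms\hookrightarrow\overline{\Ms}$ as an open immersion. The backward direction of (4) is not needed for openness; it merely tells you that no $\theta$-unstable $\Pi^0$-representation sneaks into the stable $(\dagger)$-locus, which ensures the induced map on quotients is well-defined as a map from $\Ms$ rather than from some larger space. Drop the erroneous $(\dagger)$ claim and your argument is complete.
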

\begin{remark}
Although it appears that $\overline{\Ms}$ is nonsingular and connected in the instances we care about, we do not need this.  Instead, we may replace $\overline{\Ms}$  by the closure of $\Ms$ in $\overline{\Ms}$ and give that closure the reduced scheme structure.  Thus, 
\begin{center}
{\em in what follows we always assume without comment that $\overline{\Ms}$ is integral and projective.}
\end{center}
\end{remark}

\section{A Perfect Complex on $\Ms\times\overline{\Ms}$}
We note that the construction in this section is similar to the one in Section 5 of \cite{Nakajima Duke}.  However, we wish to emphasize that Nakajima's framings are {\em not} explicitly present in this section: for applications to Nakajima quiver varieties with nonzero framing, one should take $Q = (Q_0)^{\on{CB}}$ to be the Crawley-Boevey quiver associated to the quiver $Q_0$ used in Nakajima's constructions.

Fix a quiver $Q$ and a dimension vector $\alpha$.  Let $V_{\bullet,\bullet}$, $W_{\bullet,\bullet}$ be two $I\times [a,b]$-graded vector spaces with dimension vector $\algtr$; we write $V^\ell_{i,n}$ for the $(i,n)$-graded piece.
\begin{remark}
We again emphasize that $V_{\bullet,\bullet}$, $W_{\bullet,\bullet}$ will be endowed with the structure of representations of $\Qgtr$ satisfying the relations of $A$.  Our choice of notation for the space $W_{\bullet,\bullet}$ is {\em not} meant to indicate any relationship to Nakajima's framing vector space $(W_i)_{i\in I}$.
\end{remark}

\begin{convention}
We now fix an $N\geq 2$ and set $[a,b]=[0,N]$ in the definitions of $\Qgtr$, $\algtr$, $A$.  
\end{convention}

Suppose that we choose representations of $A$ in $V_{\bullet,\bullet}, W_{\bullet,\bullet}$; we write $(a^V, \pv^V) = (a^V_{h,n}, \pv^V_{i,n})$, respectively 
$(a^W, \pv^W) = (a^W_{h,n}, \pv^W_{i,n})$ to denote these two structures.  We also write 
\bd
\ds a^V_n = \sum_{h\in H}a^V_{h,n} \hspace{1em}\text{and}\hspace{1em} \pv^V_n = \sum_{i\in I} \pv^V_{i,n}, \hspace{2em} \text{and similarly for $W$.  }
\ed

\begin{notation}
Given a linear operator $L$ between such graded vector spaces, we sometimes write $L_i$ to mean ``the component of the operator acting on the $i$-graded piece of the domain;'' for example,  the notation $[(\pv^V_1)\inv \lambda \pv^W_1]_{s(\overline{h}_0)}$ is used in Equation \eqref{precise} to mean the component of $(\pv^V_1)\inv \lambda \pv^W_1$ acting at vertex $s(\overline{h}_0)$.  We also remind the reader that $s(\overline{h}) = t(h)$, which explains some possibly confusing indices in the proof of Proposition \ref{prop:complex-features} below.
\end{notation}
\begin{assumption}\label{assumption-of-invertibility}
We assume that the representation $V_{\bullet,\bullet}$ lies in the image of $\on{Ind}$: in other words, the linear operators $\pv^V_{i,n}$ are invertible for $n\in[0,N-1]$.
\end{assumption}

Consider the vector spaces and maps, graded so  $E(V_{\bullet, 0}, W_{\bullet, 1})$ lies in cohomological degree $0$,
\begin{equation}\label{eq:perfect-complex}
L(V_{\bullet, 0},W_{\bullet, 0}) \xrightarrow{\partial_0} E(V_{\bullet, 0}, W_{\bullet, 1})\xrightarrow{\partial_1} L(V_{\bullet, 0}, W_{\bullet, 2}),
\end{equation}
defined as follows: given $\phi\in L(V_{\bullet, 0}, W_{\bullet, 0})$ and $\psi\in E(V_{\bullet, 0},W_{\bullet, 0})$,  we let
\begin{equation}\label{mapformulas}
\partial_0(\phi) = a^W_0\phi - \pv^W_0\circ\phi \circ(\pv^V_1)\inv a^V_0, \hspace{2em} \partial_1(\psi) = (\epsilon a^W_1)\psi + \pv^W_1\circ \psi \circ(\pv^V_1)\inv (\epsilon a^V_0).
\end{equation}

\begin{prop}\label{prop:complex-features}
\mbox{}
\begin{enumerate}
\item The kernel of $\partial_0$ is naturally identified with a subspace of $\Hom_{A}(V_{\bullet, \bullet}, W_{\bullet, \bullet})$.
\item The composite $\partial_1\circ\partial_0$ is zero.
\item If $[a,b]=[0,2]$, the cokernel of $\partial_1$ is naturally identified with  $\Hom_{A}(W_{\bullet, \bullet}, V_{\bullet, \bullet})^*$.
\end{enumerate}
\end{prop}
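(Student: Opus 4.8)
The plan is to read all three parts off the interpretation of \eqref{eq:perfect-complex} as a three-term stretch of a complex computing $\Ext^\bullet_A(V_{\bullet,\bullet},W_{\bullet,\bullet})$, using Assumption~\ref{assumption-of-invertibility} and the relations (1),(2) defining $A$. For part (1): given $\phi\in L(V_{\bullet,0},W_{\bullet,0})$ with $\partial_0(\phi)=0$, I would use invertibility of the $\pv^V_{i,n}$ for $n\in[0,N-1]$ to propagate $\phi=\Phi_{\bullet,0}$ to all levels by the forced recursion $\Phi_{i,n+1}:=\pv^W_{i,n}\circ\Phi_{i,n}\circ(\pv^V_{i,n})\inv$; this is exactly the ``extend upwards'' mechanism of Remark~\ref{extension of V}. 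By construction the family $\{\Phi_{i,n}\}$ intertwines all the $\pv$-arrows; it intertwines the arrows $a_{h,n}$ at level $0$ precisely because $\partial_0(\phi)=0$, and at level $n+1$ by induction on $n$ using relation (2) for both $V$ and $W$ and cancelling one invertible factor $\pv^V_{s(h),n}$. Thus $\phi\mapsto\{\Phi_{i,n}\}$ is a natural injection $\ker\partial_0\hookrightarrow\Hom_A(V_{\bullet,\bullet},W_{\bullet,\bullet})$, injective since restriction to level $0$ recovers $\phi$; restricting an arbitrary $A$-map to level $0$ gives a two-sided inverse, so this is in fact an isomorphism, although only the injection is asserted.

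Part (2) is a direct calculation. Substituting the formulas \eqref{mapformulas} into $\partial_1(\partial_0(\phi))$ yields four terms. The two ``outer'' terms --- $(\epsilon a^W_1)(a^W_0\phi)$ and the one ending in $(\epsilon a^V_0)$ --- vanish by the preprojective relation (1) at level $0$ for $W$ and for $V$ respectively, which holds because $V_{\bullet,\bullet}$ and $W_{\bullet,\bullet}$ are $A$-modules. The two ``cross'' terms become equal after moving $\pv$'s past $a$'s via relation (2) and invoking invertibility of $\pv^V$, and so cancel. This is the most index-heavy step, and the $\overline{h}\leftrightarrow h$ switches flagged in the Notation above must be tracked with care, but it is otherwise routine.

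For part (3), specialize to $[a,b]=[0,2]$, so that \eqref{eq:perfect-complex} has exactly three terms and, dually, $\Hom_A(W_{\bullet,\bullet},V_{\bullet,\bullet})$ is determined by its level-$2$ components (again by invertibility of $\pv^V$). I would dualize \eqref{eq:perfect-complex}: trace pairings give $L(V_{\bullet,0},W_{\bullet,m})^*\cong L(W_{\bullet,m},V_{\bullet,0})$ for $m=0,2$ and $E(V_{\bullet,0},W_{\bullet,1})^*\cong E(W_{\bullet,1},V_{\bullet,0})$ after the arrow-reversal $h\mapsto\overline{h}$ (here the signs $\epsilon(h)$ must be tracked). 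Under these identifications the transpose complex is, up to sign and the level-reversal $0\leftrightarrow 2$, a complex of the same shape \eqref{eq:perfect-complex} built from $W_{\bullet,\bullet}$ and $V_{\bullet,\bullet}$, with $\pv^V$ playing the invertible role. Since $\coker(\partial_1)=\bigl(\ker\partial_1^{\vee}\bigr)^{*}$ and $\partial_1^{\vee}$ is the first differential of this transpose complex, the mirror image of the argument of part (1) --- propagating downward from level $2$ instead of upward from level $0$ --- identifies $\ker\partial_1^{\vee}$ with $\Hom_A(W_{\bullet,\bullet},V_{\bullet,\bullet})$, giving the stated isomorphism. The hypothesis $N=2$ is used precisely to ensure that the three levels appearing in \eqref{eq:perfect-complex} are all the levels of $\Qgtr$, so that nothing is lost upon dualizing.

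The step I expect to be the main obstacle is the bookkeeping in part (3): pinning down the dual identifications of the $E$- and $L$-terms, in particular the $\epsilon$-signs and the precise match of the transposed differentials with the $\partial$'s of \eqref{mapformulas}, and confirming that $N=2$ enters exactly where claimed. Parts (1) and (2) are the graded-tripled analogue of Nakajima's discussion surrounding Lemma~5.2 of \cite{Nakajima Duke} and should present no surprises.
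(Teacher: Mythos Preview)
Your proposal is correct and follows the paper's route essentially line for line: for (1) propagate $\phi$ upward via $\Phi_{n+1}=\pv^W_n\Phi_n(\pv^V_n)^{-1}$; for (2) expand into four terms, kill the outer two by the preprojective relation and cancel the cross terms using relation (2); for (3) dualize via trace and propagate downward from level $2$ using invertibility of $\pv^V$.

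One caveat on (3): your heuristic that the transpose complex is ``of the same shape as \eqref{eq:perfect-complex} with $V,W$ swapped and levels reversed'' does not hold literally. The dual is $L(W_{\bullet,2},V_{\bullet,0})\to E(W_{\bullet,1},V_{\bullet,0})\to L(W_{\bullet,0},V_{\bullet,0})$ with $V_{\bullet,0}$ fixed throughout, and its differentials still involve $(\pv^V)^{-1}$ rather than $(\pv^W)^{-1}$, so you cannot simply invoke part (1). The paper instead writes out the condition $\lambda\in\ker\partial_1^\vee$ explicitly by the trace pairing (their Lemma~\ref{nondegeneracy lemma} and equation \eqref{psi condition}) and checks directly that this is precisely the compatibility needed to propagate $\lambda$ to an $A$-map $\Lambda_\bullet\colon W_{\bullet,\bullet}\to\widetilde V_{\bullet,\bullet}(-2)_{[0,2]}\cong V_{\bullet,\bullet}$, invoking Remark~\ref{extension of V} for the last identification. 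You have already flagged exactly this step as the main obstacle, so there is no conceptual gap; just do the explicit trace computation rather than leaning on the symmetry heuristic when you write it out.
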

We note that for assertion (3), we use in a fundamental way that Remark \ref{extension of V} applies to $V_{\bullet, \bullet}$.
\begin{proof}
If $\partial_0(\phi) = 0$, then we may define a linear map $\Phi_\bullet: V_{\bullet, \bullet}\rightarrow W_{\bullet, \bullet}$ by
$\Phi_n = \pv^W_{n-1}\dots\pv^W_0\circ \phi \circ (\pv^V_{n-1}\dots \pv^V_0)\inv$. 
  It is immediate from the construction that $\Phi_\bullet$  commutes with the operators $\pv_n$ in the obvious sense.  
  Similarly, since $\partial_0(\phi)= 0$ we get that $a^W_0\Phi_0 = \Phi_1a^V_0$; it is immediate by induction
   that $\Phi_\bullet$ is compatible with all operators $a$ in the obvious sense.  Thus $\Phi_\bullet\in \Hom_{A}(V_{\bullet, \bullet}, W_{\bullet, \bullet})$.  
   Since $\pv\in A$ acts invertibly on $V_{\bullet, \bullet}$ in the appropriate range, any such $\Phi_\bullet$ is determined uniquely by $\Phi_0 = \phi$ by the above construction, proving assertion (1).
   
 For assertion (2), we calculate:
 \begin{multline}
 \partial_1\partial_0(\phi) = (\epsilon a^W_1) a^W_0\phi 
 - (\epsilon a^W_1)\pv^W_0\phi (\pv^V_1)\inv a^V_0
 + \\
 \pv^W_1
 a^W_0\phi
  (\pv^V_1)\inv (\epsilon a^V_0)
 -
 \pv^W_1
 \pv^W_0\phi (\pv^V_1)\inv a^V_0
  (\pv^V_1)\inv (\epsilon a^V_0).
  \end{multline}
Now 
\bd
 - (\epsilon a^W_1)\pv^W_0\phi (\pv^V_1)\inv a^V_0
 +
 \pv^W_1
 a^W_0\phi
  (\pv^V_1)\inv (\epsilon a^V_0)  
  = 
   - \pv^W_1 (\epsilon a^W_1)\phi (\pv^V_1)\inv a^V_0
 +
 \pv^W_1
 a^W_0\phi
  (\pv^V_1)\inv (\epsilon a^V_0) 
  = 0.
  \ed
  Thus to prove (2) it suffices to show that 
  \bd
  (\epsilon a^W_1) a^W_0\phi  -
 \pv^W_1
 \pv^W_0\phi (\pv^V_1)\inv a^V_0
  (\pv^V_1)\inv (\epsilon a^V_0) 
  = 
   (\epsilon a^W_1) a^W_0\phi  -
 \pv^W_1
 \pv^W_0\phi (\pv^V_1)\inv 
 (\pv^V_2)\inv a^V_1
   (\epsilon a^V_0)
  =0
  \ed
  However, $ (\epsilon a^W_1) a^W_0 = 0 = a^V_1
   (\epsilon a^V_0)$ is immediate from the preprojective relations.

  We now turn to assertion (3).  Suppose $\lambda: W^2\rightarrow V^0$ is an $I$-graded linear map.  We have that $\on{tr}\big(\lambda\partial_1(\psi)\big) = 0$ for all $\psi\in E(V^0,W^1)$ if and only if
  \bd
  0 = \on{tr}\big(\lambda (\epsilon a^W_1)\psi\big) + \on{tr}\big(\lambda \pv^W_1\circ \psi \circ(\pv^V_1)\inv (\epsilon a^V_0)\big) 
  = 
  \on{tr}\big(\big(\lambda (\epsilon a^W_1) + (\pv^V_1)\inv (\epsilon a^V_0)\lambda \pv^W_1\big)\circ
  \psi \big)
  \ed
for all $\psi$, if and only if 
\begin{equation}\label{psi condition}
\lambda (\epsilon a^W_1) + (\epsilon a^V_0) (\pv^V_1)\inv \lambda \pv^W_1 = 0.
\end{equation}
More precisely, this formula ``unpacks'' as follows.  Suppose that $\psi  = (\psi_h)_{h\in H}$ and assume given an $h_0\in H$ with $\psi_h = 0$ for $h\neq h_0$.  Then the trace condition reads
\bd
\on{tr}\left[\lambda_{t(h_0)} (\epsilon a^W_{\overline{h}_0,1})\psi_{h_0} + (\epsilon a^V_{\overline{h}_0,0}) [(\pv^V_{s(\overline{h}_0),1})\inv \lambda_{s(\overline{h}_0)} (\pv^W_{t(h_0),1})] \psi_{h_0}\right] = 0.
\ed
Since $\psi_{h_0}: V^0_{s(h_0)} \rightarrow W^1_{t(h_0)}$ is arbitrary, it follows that
\begin{equation}\label{precise}
\lambda_{t(h_0)} (\epsilon a^W_{\overline{h}_0,1}) + (\epsilon a^V_{\overline{h}_0,0}) [(\pv^V_{1})\inv \lambda \pv^W_{1}]_{s(\overline{h}_0)}= 0 = 
\lambda_{t(h_0)} (a^W_{\overline{h}_0,1}) + (a^V_{\overline{h}_0,0}) [(\pv^V_1)\inv \lambda \pv^W_1]_{s(\overline{h}_0)}.
\end{equation}

By the nondegeneracy of the trace pairing, we obtain:
\begin{lemma}\label{nondegeneracy lemma}
The cokernel of $\partial_1$ is naturally dual to the space of those $\lambda$ satisfying \eqref{psi condition}.
\end{lemma}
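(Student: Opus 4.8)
The plan is to derive Lemma \ref{nondegeneracy lemma} from the general duality between the cokernel of a linear map and the kernel of its transpose, combined with the trace computation carried out just above. First I would note that all the vector spaces appearing in \eqref{eq:perfect-complex} are finite-dimensional, so for the map $\partial_1 \colon E(V_{\bullet,0},W_{\bullet,1}) \to L(V_{\bullet,0},W_{\bullet,2})$ the evaluation pairing between $L(V_{\bullet,0},W_{\bullet,2})$ and its linear dual restricts to a perfect pairing
\[
\coker(\partial_1)\times\ker(\partial_1^\vee)\longrightarrow\mathbb{C},
\]
where $\partial_1^\vee$ is the transpose map out of $L(V_{\bullet,0},W_{\bullet,2})^\vee$. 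Hence it suffices to identify $\ker(\partial_1^\vee)$, inside that dual space, with the space of $\lambda$ satisfying \eqref{psi condition}.

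Next I would use the trace form to identify $L(V_{\bullet,0},W_{\bullet,2})^\vee$ with $L(W_{\bullet,2},V_{\bullet,0})$ via $\lambda\mapsto\big(\phi\mapsto\operatorname{tr}(\lambda\phi)\big)$; this is a perfect pairing, since the trace pairing $\Hom(P,Q)\times\Hom(Q,P)\to\mathbb{C}$ is nondegenerate for finite-dimensional $P,Q$, and it respects the $I$-grading. Under this identification, a functional lies in $\ker(\partial_1^\vee)$ exactly when $\operatorname{tr}(\lambda\,\partial_1(\psi))=0$ for every $\psi\in E(V_{\bullet,0},W_{\bullet,1})$. The computation immediately preceding the statement — cyclically permuting the trace to bring the right-hand factors of $\partial_1(\psi)$ past $\lambda$, followed by the nondegeneracy of the trace pairing on $E(V^0,W^1)$ and the arrow-by-arrow unpacking in \eqref{precise} — shows precisely that this vanishing is equivalent to \eqref{psi condition}. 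Composing the two perfect pairings yields the asserted canonical duality between $\coker(\partial_1)$ and the solution space of \eqref{psi condition}. (Equivalently, one can phrase the whole argument as the observation that, under the trace identifications, $\partial_1$ is the transpose of the operator $\lambda\mapsto \lambda(\epsilon a^W_1)+(\epsilon a^V_0)(\pv^V_1)\inv\lambda\pv^W_1$, whence the lemma is immediate.)

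I do not expect a serious obstacle here; the only delicate point is bookkeeping. One must track carefully which $I\times[0,N]$-graded pieces each of $a^V_0$, $a^W_1$, $\pv^V_1$, $\pv^W_1$ acts on, so that the informally written relation \eqref{psi condition} is read in its precise form \eqref{precise} and the operators compose in the stated order. Once the indices are pinned down, every step is purely formal.
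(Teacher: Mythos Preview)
Your proposal is correct and follows essentially the same approach as the paper: the paper's proof consists of the single clause ``By the nondegeneracy of the trace pairing, we obtain'' immediately following the computation you reference, so your argument simply spells out the standard linear-algebra step (cokernel dual to kernel of transpose, trace identifying $L(V_{\bullet,0},W_{\bullet,2})^\vee$ with $L(W_{\bullet,2},V_{\bullet,0})$) that the paper leaves implicit.
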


We now use  Assumption \ref{assumption-of-invertibility}
and Remark \ref{extension of V} to see that $V_{\bullet, \bullet}$ lifts to an $I\times\mathbb{Z}$-graded $\Pi^0[\pv]$-module $\wt{V}_{\bullet, \bullet}$ with $\dim(V_{i,n}) = \alpha_i$ for all $i \in I$ and $n\in\mathbb{Z}$, in such a way that $\wt{V}_{\bullet, \bullet +1} \cong \wt{V}_{\bullet,\bullet}$ via multiplication by $\pv$.  More precisely, writing $(\wt{V}_{\bullet, \bullet})_{[i,j]} := \wt{V}_{\bullet, \bullet\geq i}/\wt{V}_{\bullet, \bullet\geq j+1}$, we extend $\lambda$ to a graded linear map
$\Lambda_\bullet: W_{\bullet, \bullet}\rightarrow \wt{V}_{\bullet, \bullet}(-2)_{[0,2]}\cong V_{\bullet, \bullet}$ by taking 
\bd
\Lambda_2 = \lambda, \hspace{1em}
\Lambda_1 = - (\pv^V_1)\inv \lambda \pv^W_1,  \hspace{1em}
\Lambda_0 = (\pv^V_0)\inv  (\pv^V_1)\inv \lambda \pv^W_1 \pv^W_0,
\ed
similarly to our construction of $\Phi_\bullet$ above.  As in our construction of $\Phi_\bullet$, it follows from Equation \eqref{psi condition} that $\Lambda_\bullet$ is indeed a graded $A$-module homomorphism; and that any graded $A$-module homomorphism $\Lambda_\bullet: W_{\bullet, \bullet}\rightarrow V_{\bullet, \bullet}$ is uniquely determined by $\Lambda_2=\lambda$ by this construction, completing the proof.
\end{proof}

\begin{corollary}\label{cor:perfect-complex} 
When $Q$ is a Crawley-Boevey quiver and $[a,b]=[0,2]$, then the complex \eqref{eq:perfect-complex} descends to a perfect complex $C$ on $\Ms\times\overline{\Ms}$.
\end{corollary}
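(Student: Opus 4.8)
The plan is to build $C$ directly from the fibrewise construction \eqref{eq:perfect-complex}--\eqref{mapformulas}: all the substantial content is already contained in Proposition \ref{prop:complex-features}, so what remains is to check that those formulas assemble into a complex of vector bundles over the quotient $\Ms\times\overline{\Ms}$. For the $\Ms$-factor I would realise $\Ms$ as the geometric quotient of the $\theta$-stable locus of $\on{Rep}(\Pi^0(Q),\alpha)$ by the free action of $\mathbb{G}_0=\prod_{i\in I_0}GL(\mathbf v_i)$; the spaces $V_i$ then define tautological bundles $\mathscr V_i$ on $\Ms$, with $\mathscr V_\infty=\cO$ because $\alpha_\infty=1$ (a first use of the Crawley-Boevey hypothesis). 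Applying $\mathsf{Ind}^\circ$ to the universal $\Pi^0(Q)$-module produces a universal family of $A$-modules over $\Ms$, and in the resulting gauge each structure map $\pv^V_{i,n}$ is the identity of $\mathscr V_i$; in particular the family lies in the image of $\mathsf{Ind}$, so Assumption \ref{assumption-of-invertibility} holds at every point of $\Ms$, and $(\pv^V_1)\inv$ makes sense there as an honest bundle map.

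On the $\overline{\Ms}$-factor, $\overline{\Ms}$ is a closed subscheme of the GIT quotient $\on{Rep}(A,\algtr)/\!\!/_{\chigtr}\Ggtr$; on the $\thetagtr$-stable locus the $\Ggtr$-action has stabiliser exactly the diagonal scalars, and since $\algtr_{\infty,0}=1$ we may rigidify the moduli problem at the vertex $(\infty,0)$ (equivalently, trivialise the residual $\Gm$-gerbe), so that the $\mathscr W_{i,n}$ are honest vector bundles on $\overline{\Ms}$ carrying tautological $A$-module structure maps $a^W_{h,n},\pv^W_{i,n}$. Under the functor $R\mapsto\mathscr R$, the three terms of \eqref{eq:perfect-complex} become the external tensor products $\bigoplus_{i\in I}\mathscr V_i^\vee\boxtimes\mathscr W_{i,0}$, $\bigoplus_{h\in H}\mathscr V_{s(h)}^\vee\boxtimes\mathscr W_{t(h),1}$ and $\bigoplus_{i\in I}\mathscr V_i^\vee\boxtimes\mathscr W_{i,2}$ of tautological bundles on $\Ms\times\overline{\Ms}$, placed in cohomological degrees $-1,0,1$. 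Every operator in \eqref{mapformulas} is one of these tautological maps or the inverse of $\pv^V_1$, so $\partial_0$ and $\partial_1$ are morphisms of these bundles, and $\partial_1\circ\partial_0=0$ because it holds in every fibre by Proposition \ref{prop:complex-features}(2). Finally, a bounded complex of locally free sheaves of finite rank on any scheme is perfect, which gives the desired complex $C$ on $\Ms\times\overline{\Ms}$ (no smoothness of $\overline{\Ms}$ is needed).

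I expect the only real obstacle to be the descent to the quotients, in particular the rigidification on the $\overline{\Ms}$-side. This is exactly where the hypothesis that $Q$ be a Crawley-Boevey quiver enters: it forces $\algtr_{\infty,n}=1$, which is precisely what removes the $\Gm$-ambiguity in the tautological bundles and allows the complex itself, rather than merely a line-bundle twist of it, to live on $\Ms\times\overline{\Ms}$. The restriction $[a,b]=[0,2]$ is simply the setting in which Proposition \ref{prop:complex-features} was proved, and it is inherited here verbatim; everything beyond the descent is a formal consequence of the fibrewise statements already established.
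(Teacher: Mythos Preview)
Your proposal is correct and follows essentially the same approach as the paper. Both arguments hinge on the observation that the Crawley-Boevey hypothesis gives $\alpha_\infty=1$, so that $\Ggtr\cong(\mathbb{G}_0)^3\times\Gm^3$ and one may split off the diagonal $\Gm$-stabiliser: the paper does this by passing to the subgroup $(\Ggtr)_0=(\mathbb{G}_0)^3\times\Gm^2\times\{1\}$ acting freely on the stable locus, while you phrase the same move as rigidifying at the vertex $(\infty,0)$; either way the equivariant complex \eqref{eq:perfect-complex} descends to the quotient $\Ms\times\overline{\Ms}$.
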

\begin{proof}
 When $Q = (Q_0)^{\on{CB}}$ is a Crawley-Boevey quiver, we have $\mathbb{G} \cong \mathbb{G}_0\times \Gm$, where $\ds\mathbb{G}_0 = \prod_{i\in I_0}GL(\mathbf{v}_i)$, $\Gm$ acts trivially on the stable locus, and $\mathbb{G}_0$ acts freely on the stable locus of $\on{Rep}(\Pi^0(Q),\alpha)$ with quotient $\Ms$. Similarly, $\Ggtr\cong (\mathbb{G}_0)^3\times\Gm^{3}$; the subgroup $(\Ggtr)_0 = (\mathbb{G}_0)^3\times\Gm^2 \times \{1\}$ acts freely on $\on{Rep}(A,\algtr)^{\on{s}}$ with quotient $\Ms$.  
Since the complex defined by \eqref{eq:perfect-complex} is $(\Ggtr)_0\times (\Ggtr)_0$-equivariant, it descends to a perfect complex $C$ on $\Ms\times\overline{\Ms}$.  
\end{proof}

\section{Proofs of Theorems \ref{main thm}, \ref{complex-oriented}, and \ref{derived cat}}
Let $Q_0$ be a quiver with dimension vector $\mathbf{v}$ and framing vector $\mathbf{w}$, and let $Q = Q_0^{\on{CB}}$ be the  Crawley-Boevey quiver associated to $Q_0$ and $\mathbf{w}$.  
\begin{center}
{\bf We take $[a,b] = [0,2]$ in the definitions of $\Qgtr$, etc. }
\end{center}
Let $\Ms\hookrightarrow\overline{\Ms}$ denote the compactification of the quiver variety constructed in Section \ref{stability for CB}.   We wish to modify slightly the complex of \eqref{eq:perfect-complex} and Corollary \ref{cor:perfect-complex}.  Thus, we consider the splitting
\bd
L(V_{\bullet, 0},W_{\bullet, 0})= 
L(V_{\bullet, 0},W_{\bullet, 0})_{I_0} \oplus {\mathbb C} := 
\left[ \bigoplus_{i\in I_0} \Hom(V_{i, 0},W_{i, 0})\right] \oplus \Hom(V_{\infty,0}, W_{\infty, 0}). 
\ed 
\bd
\text{We write} \hspace{2em}
\delta_0 = \partial_0|_{L(V_{\bullet, 0},W_{\bullet, 0})_{I_0}}: 
L(V_{\bullet, 0},W_{\bullet, 0})_{I_0} \rightarrow E(V_{\bullet,0}, W_{\bullet, 1}).
\ed

Similarly, we consider the splitting
\bd
L(V_{\bullet, 0},W_{\bullet, 2})= 
L(V_{\bullet, 0},W_{\bullet, 2})_{I_0} \oplus {\mathbb C} := 
\left[ \bigoplus_{i\in I_0} \Hom(V_{i, 0},W_{i, 2})\right] \oplus \Hom(V_{\infty,0}, W_{\infty,2}) 
\ed
and write $\delta_1 = \pi\circ\partial_1$ for the composite of $\partial_1$ followed by the projection
\bd
\pi: L(V_{\bullet, 0},W_{\bullet, 2}) \twoheadrightarrow L(V_{\bullet, 0},W_{\bullet, 2})_{I_0}.
\ed
It is immediate from Corollary \ref{cor:perfect-complex} that we obtain a complex on $\Ms\times\overline{\Ms}$, namely
\begin{equation}\label{real-complex}
R: \mathscr{L}(V_{\bullet, 0},W_{\bullet, 0})_{I_0} \xrightarrow{\delta_0} \mathscr{E}(V_{\bullet, 0}, W_{\bullet, 1})\xrightarrow{\delta_1} \mathscr{L}(V_{\bullet, 0}, W_{\bullet, 2})_{I_0}.
\end{equation}
\begin{remark}\label{real is R}
The complex \eqref{real-complex} is evidently of the form \eqref{R summary}.
\end{remark}

\begin{thm}\label{resolution-prop}
For the complex $R$ of \eqref{real-complex}, we have:
\begin{enumerate}
\item  $\delta_0$ is injective and $\delta_1$ is surjective on each fiber.  In particular, $\mathcal{H}^1(R) = 0 = \mathcal{H}^1(R^\vee)$, and $\mathcal{H}^0(R)$ is a vector bundle on $\Ms\times\overline{\Ms}$.
\item the map $\mathbb{C} = \Hom(V_{\infty,0}, W_{\infty,0})\rightarrow E(V_{\bullet,0},W_{\bullet, 1})$ defines a section $s$ of $\mathcal{H}^0(R)$ whose scheme-theoretic zero locus is the graph $\Gamma$ of the inclusion $\Ms\hookrightarrow \overline{\Ms}$.
\item $\on{rk}(R) = \dim(\overline{\Ms})$.
\end{enumerate}
\end{thm}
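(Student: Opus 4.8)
The plan is to reduce assertions (1) and the set-theoretic content of (2) to the homological identifications already established in Propositions~\ref{prop:complex-features} and~\ref{prop:quiver-moduli} — the crucial point being that the $\infty$-graded pieces $V_{\infty,n}$, $W_{\infty,n}$ are one-dimensional — to prove (3) by a direct rank computation, and to prove the scheme-theoretic refinement of (2) by a transversality argument, which I expect to be the main obstacle. Throughout, note that $V_{\bullet,\bullet}$ is a stable $A$-module (by Assumption~\ref{assumption-of-invertibility} together with Proposition~\ref{prop:quiver-moduli}(4)) and $W_{\bullet,\bullet}$ is stable by construction, so Proposition~\ref{prop:quiver-moduli}(3) applies to $\Hom_A(V_{\bullet,\bullet},W_{\bullet,\bullet})$ and $\Hom_A(W_{\bullet,\bullet},V_{\bullet,\bullet})$ on every fibre.

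For~(1): since $\delta_0=\partial_0|_{\mathscr{L}(V_{\bullet,0},W_{\bullet,0})_{I_0}}$, its fibrewise kernel is $\ker(\partial_0)\cap L(V_{\bullet,0},W_{\bullet,0})_{I_0}$; by Proposition~\ref{prop:complex-features}(1) and Proposition~\ref{prop:quiver-moduli}(3), $\ker(\partial_0)=0$ unless $V_{\bullet,\bullet}\cong W_{\bullet,\bullet}$, in which case it is spanned by $\Phi_0$ for an $A$-module isomorphism $\Phi$, and $\Phi$ restricts to an isomorphism of the one-dimensional spaces at $\infty$, so $\Phi_0$ has nonzero component in $\Hom(V_{\infty,0},W_{\infty,0})$ and does not lie in $L(V_{\bullet,0},W_{\bullet,0})_{I_0}$; hence $\delta_0$ is fibrewise injective. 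Dually, $\delta_1=\pi\circ\partial_1$ is fibrewise surjective exactly when the image of $\Hom(V_{\infty,0},W_{\infty,2})$ spans $\coker(\partial_1)$, and by Proposition~\ref{prop:complex-features}(3) and Lemma~\ref{nondegeneracy lemma} we have $\coker(\partial_1)\cong\Hom_A(W_{\bullet,\bullet},V_{\bullet,\bullet})^*$, so it suffices to produce a nonzero $\mu\in\Hom(V_{\infty,0},W_{\infty,2})$ with $\on{tr}(\Lambda_2\mu)\neq 0$ for $\Lambda$ an $A$-module isomorphism $W_{\bullet,\bullet}\to V_{\bullet,\bullet}$ — which is immediate since $\Lambda_2$ restricts to an isomorphism on the one-dimensional piece at $\infty$. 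The relation $\delta_1\circ\delta_0=\pi\circ\partial_1\circ\partial_0=0$ is inherited from Proposition~\ref{prop:complex-features}(2); together with the fibrewise injectivity and surjectivity just proved, this makes the ranks of $\ker(\delta_1)$ and $\on{im}(\delta_0)$ locally constant, so $\mathcal{H}^{\pm1}(R)=0$, $\mathcal{H}^0(R)$ is a vector bundle, and $\mathcal{H}^1(R^\vee)=\coker(\delta_0^\vee)=0$.

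For~(3): by the fibrewise exactness from~(1), $\on{rk}(R)=\on{rk}\mathcal{H}^0(R)=\on{rk}\mathscr{E}(V_{\bullet,0},W_{\bullet,1})-2\on{rk}\mathscr{L}(V_{\bullet,0},W_{\bullet,0})_{I_0}$. Since $Q=Q_0^{\on{CB}}$ has arrow set $H\sqcup\{a_{(i,j)}\}\sqcup\{\overline{a}_{(i,j)}\}$ with $\alpha_\infty=1$ and $\alpha|_{I_0}=\mathbf v$, the first term is $\sum_{h\in H}\mathbf v_{s(h)}\mathbf v_{t(h)}+2\sum_{i\in I_0}\mathbf w_i\mathbf v_i$ and the subtracted term is $2\sum_{i\in I_0}\mathbf v_i^2$; the resulting expression is exactly Nakajima's formula for $\dim\Ms=\dim\overline{\Ms}$. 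For the set-theoretic part of~(2): the composite of $\Hom(V_{\infty,0},W_{\infty,0})\hookrightarrow\mathscr{L}(V_{\bullet,0},W_{\bullet,0})$ with $\partial_0$ lands in $\ker(\delta_1)$ (because $\partial_1\partial_0=0$), and its projection to $\mathcal{H}^0(R)=\ker(\delta_1)/\on{im}(\delta_0)$ is the section $s$. By the analysis in~(1), $s$ vanishes at $([V],[W])$ iff $\Hom(V_{\infty,0},W_{\infty,0})\subseteq L(V_{\bullet,0},W_{\bullet,0})_{I_0}+\ker(\partial_0)$, and — again because an $A$-module isomorphism is nonzero at $\infty$ — this holds iff $\ker(\partial_0)\neq 0$, i.e.\ iff $V_{\bullet,\bullet}\cong W_{\bullet,\bullet}$ as $A$-modules, i.e.\ (Proposition~\ref{prop:quiver-moduli}) iff $([V],[W])$ lies on the graph $\Gamma$ of $\on{Ind}\colon\Ms\hookrightarrow\overline{\Ms}$. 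Thus $Z(s)=\Gamma$ as sets, and since $\Gamma$ is reduced and $s$ vanishes on it set-theoretically, $\Gamma\hookrightarrow Z(s)$ is a closed immersion of schemes.

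The remaining, and hardest, point is to upgrade this to an equality of schemes. Because $\on{rk}\mathcal{H}^0(R)=\dim\overline{\Ms}$ by~(3) and $\Gamma$ lies in the smooth locus $\Ms\times\overline{\Ms}^{\,\on{sm}}$, it is enough to show that $Z(s)$ is smooth of dimension $\dim\Ms$ along $\Gamma$: then $\Gamma\hookrightarrow Z(s)$ is a closed immersion of smooth schemes of equal dimension, hence an open immersion as well, hence an isomorphism on each connected component of $Z(s)$ (all of which meet $\Gamma$), so $Z(s)=\Gamma$. Smoothness of $Z(s)$ of the expected dimension is in turn equivalent to surjectivity of the derivative $d_ps\colon T_p(\Ms\times\overline{\Ms})\to\mathcal{H}^0(R)_p$ for $p\in\Gamma$, equivalently to injectivity of the induced map $N_{\Gamma/\Ms\times\overline{\Ms}}\to\mathcal{H}^0(R)|_\Gamma$ between bundles of equal rank $\dim\overline{\Ms}$. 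I would prove this transversality by the explicit computation generalizing \cite[Lemma 5.2]{Nakajima Duke}: over $\Gamma$ one identifies $V_{\bullet,\bullet}$ with $W_{\bullet,\bullet}$, uses Remark~\ref{extension of V} to control the extension of homomorphisms across graded degrees, and checks that $ds|_\Gamma$ is precisely the projection onto the normal directions of $\Gamma$. This is the step where the real work lies; assertions (1) and (3) and the set-level statement of (2) are essentially formal consequences of the homological algebra of Propositions~\ref{prop:complex-features}–\ref{prop:quiver-moduli} together with a dimension count.
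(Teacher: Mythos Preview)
Your proposal is correct and follows essentially the same approach as the paper's own proof: parts (1) and (3) are deduced from the homological identifications of Propositions~\ref{prop:complex-features} and~\ref{prop:quiver-moduli} together with the one-dimensionality at the vertex $\infty$, and part (2) is reduced to a transversality argument along $\Gamma$. The paper carries out that final step via a short dual-numbers computation---deforming $a^V$, $a^W$ to first order and showing the linearization $\phi\mapsto b^W_0\phi-\phi b^V_0$ of $\partial_0$ is degenerate only in directions tangent to $\Gamma$---exactly in the spirit of \cite[Lemma~5.2]{Nakajima Duke} as you anticipate.
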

\begin{proof}
(1) By Proposition \ref{prop:complex-features}, when $V_{\bullet, \bullet}$ and $W_{\bullet,\bullet}$ are stable, $\on{ker}(\partial_0)$ is zero or consists of multiples of the identity endomorphism of $V_{\bullet,\bullet} \cong W_{\bullet,\bullet}$; in either case, we have $\on{ker}(\partial_0)\cap L(V_{\bullet, 0},W_{\bullet, 0})_{I_0} = 0$.  Thus $\delta_0$ is injective on each fiber.  

Similarly either $\on{coker}(\partial_1)$ is zero, or else $V_{\bullet,\bullet}\cong W_{\bullet,\bullet}$ and $\on{coker}(\partial_1)\cong \Hom(W_{\bullet,\bullet}, V_{\bullet,\bullet})^* \cong \mathbb{C}$ by stability of $V_{\bullet,\bullet}$ and $W_{\bullet,\bullet}$; in the latter case, since $\on{im}(\partial_1)$ has codimension $1$, its projection on $L(V_{\bullet, 0}, W_{\bullet, 2})_{I_0}$ must be surjective: otherwise $\on{im}(\partial_1)\cap \Hom(V_{\infty,0},W_{\infty,2}) \neq 0$, but (by stability) every nonzero element of its dual $\Hom(W_{\bullet,\bullet}, V_{\bullet,\bullet})$ is nonzero at the vertex $\infty$.  We conclude that $\delta_1$ is surjective on each fiber, concluding the proof of assertion (1).

(2) By Proposition \ref{prop:complex-features}, the cohomologies $H^1(C)$ and $H^1(C^\vee)$ are supported set-theoretically on the graph $\Gamma$ of the inclusion $\Ms\hookrightarrow\overline{\Ms}$.  
It follows that the set-theoretic zero locus of the section $s$ of assertion (2) is $\Gamma$.  
Thus, to prove the scheme-theoretic assertion, we may restrict $R$ to $\Ms\times\Ms$.

Supposing, then, that both $\pv^V_{i,n}$ and $\pv^W_{i,n}$ act invertibly for $n=0,1$, and applying appropriate automorphisms of $V_{\bullet,\bullet}$ and $W_{\bullet,\bullet}$, we may assume that $a^V = a^W$ and all $\pv^V_{i,n}$ and $\pv^W_{i,n}$ are identity matrices.  Let $\mathbb{C}[\hbar]$ denote the ring of dual numbers and let $a^V + \hbar b^V$, $a^W + \hbar b^W$ be first-order deformations of $V_{\bullet, \bullet}$, $W_{\bullet,\bullet}$.  It is immediate from the formulas \eqref{mapformulas} that the linearization of the map $\partial_0$ of \eqref{eq:perfect-complex} is given by
$\phi \mapsto b_0^W \phi - \phi b_0^V$.  If the linearization is of less than full rank, then by Proposition \ref{prop:complex-features}(1) there is a homomorphism $0\neq \phi \in \Hom(V_{\bullet,\bullet}, W_{\bullet,\bullet})$ with   $b^W \phi = \phi b^V$.  Then  
the map $\on{Id}+ \hbar\phi$ intertwines $a^V + \hbar b^V$ and $a^W + \hbar b^W$: in other words, the differential of $\partial_0$ is degenerate only in directions tangent to $\Gamma$, which implies the assertion about $s$.

(3) The rank assertion is immediate by direct calculation as in \cite{Nakajima Duke}.
\end{proof}

\begin{proof}[Proof of Theorem \ref{main thm}]
Let $d= \on{dim}(\Ms)$.  
By Theorem \ref{resolution-prop} and Remark \ref{real is R}, the hypotheses of Corollary \ref{cohomology generation} are satisfied. Theorem \ref{main thm} follows.\end{proof}

\begin{proof}[Proof of Theorem \ref{complex-oriented}]
By Theorem 7.3.5 of \cite{NakajimaJAMS}, $H^*(\Ms,\mathbb{Z})$ is known to be free abelian and concentrated in even degrees.  By the universal coefficient theorem, it follows that for any graded ring $E^*(\on{pt})$, $H^*(\Ms,\mathbb{Z})\otimes_{\mathbb{Z}} E^*(\on{pt}) = H^*\big(\Ms,E^*(\on{pt})\big)$ and
$H^*(B\mathbb{G},\mathbb{Z})\otimes_{\mathbb{Z}} E^*(\on{pt}) =
H^*(B\mathbb{G},E^*(\on{pt})\big)$.

The Atiyah-Hirzebruch spectral sequence for a cohomology theory $E$ and space $X$ has $E_2$-page
$E_2^{p,q} = H^p\big(X, E^q(\on{pt})\big) \implies E^{p+q}(X).$
By the previous paragraph, if $E^*(\on{pt})$ is evenly graded the spectral sequence degenerates at $E_2$ for both $E^*(\Ms)$ and $E^*(B\mathbb{G})$.  Assertion (1) of the theorem thus follows from Theorem \ref{main thm}.

To prove (2), we observe that all the ingredients of the proof of Proposition \ref{prop:resolution} hold in any complex-oriented cohomology theory $E$.  In particular, there is a Gysin map for proper morphisms and one can calculate $f^*$ via pull--cup-with-graph--push; that $[\Gamma] = c_d(R)$ and Chern classes of $R$ depend polynomially on the Chern classes of the tautological bundles follow from explicit formulas as in Lemmas 2.1 and 2.3 of \cite{Hudson}.   It remains to see that $E^*(\overline{\Ms})\rightarrow E^*(\Ms)$ is surjective; however, the natural map $\Ms\rightarrow B\mathbb{G}$ factors through $\overline{\Ms}\rightarrow B\mathbb{G}$ defined via projection of $\Ggtr$ on any factor $\mathbb{G}$, and surjectivity of $E^*(\overline{\Ms})\rightarrow E^*(\Ms)$ follows from that of $E^*(B\mathbb{G})\rightarrow E^*(\Ms)$.
\end{proof}

\begin{proof}[Proof of Theorem \ref{derived cat}]
We note that assertion (1) is immediate from assertion (2).  

In light of Remark \ref{real is R}, we will use the notation of Proposition \ref{prop:resolution} for the complex $R$.    
The Koszul complex associated to the complex $R$ and section $s$ of $\mathcal{H} = \mathcal{H}^0(R)$ of Theorem \ref{resolution-prop} provides a resolution (Section B.3.4 of \cite{Fulton}) of $\theo_{\Gamma}$,
\begin{equation}\label{Koszul}
\Big[\bigwedge^d \mathcal{H}^* \rightarrow \dots \rightarrow \bigwedge^2 \mathcal{H}^* \rightarrow \mathcal{H}^*\rightarrow \theo_{\Ms\times\overline{\Ms}}\Big] \simeq \theo_{\Gamma}.
\end{equation}
For each $k$, consider the $k$th tensor power $T^k(R)$ of the complex $R$: it is a differential graded vector bundle whose terms are tensor products of $\cE_j^\ell$s and $\cF_j^\ell$s.  The symmetric group $S_k$ naturally acts on $T^k(R)$ with the usual ${\mathbb Z}/2{\mathbb Z}$-graded sign conventions; we write $\bigwedge^k(R) = T^k(R)^{S_k, \on{sgn}}$, the sign-isotypic part of $T^k(R)$.  Both operations $T^k(-)$ and $(-)^{S_k,\on{sgn}}$ preserve quasi-isomorphism, hence $\bigwedge^k(R) \simeq \bigwedge^k(\mathcal{H})$.  The Koszul complex thus writes $\theo_\Gamma$ as an iterated cone on the complexes $\bigwedge^k(R)^\vee$.  

We remark that, viewing $\cE^\bullet:=\oplus_j\cE_j^\bullet$ and $\cF^\bullet:=\oplus_j\cF_j^\bullet$ as $\mathbb{Z}/2$-graded vector bundles, we find that 
$\bigwedge^k(R)$ is a direct summand of $\bigwedge^k(\cE^\bullet\boxtimes\cF^\bullet)$ in a canonical way.  Furthermore, following the work of \cite{BR}\footnote{We thank J. Weyman for help with references.} it is known that $\bigwedge^k(\cE^\bullet\boxtimes\cF^\bullet)$ is an iterated extension of tensor products of Schur functors applied to the $\mathbb{Z}/2$-graded vector bundles $\cE^\bullet$ and $\cF^\bullet$ (see Corollary 1.2 of \cite{EW} and the discussion preceeding it for more details).  Moreover, the expression for $\bigwedge^k(\cE^\bullet\boxtimes\cF^\bullet)$ as an iterated extension of $\mathcal{S}_{\lambda}(\cE^\bullet)$ and $\mathcal{S}_{\lambda}(\cF^\bullet)$ is compatible with the expression for $\bigwedge^k(R)$ as a direct summand of $\bigwedge^k(\cE^\bullet\boxtimes\cF^\bullet)$: in particular, $\bigwedge^k(R)$ is an iterated cone on external tensor products of the objects $S_{\lambda}(\cE_j^\ell)$, $S_\lambda(\cF_j^\ell)$ that are obtained by applying Schur functors to the various $\cE_j^\ell$ and $\cF_j^\ell$.  

Suppose  $\cG$ is a coherent complex on $\overline{\Ms}$. For any external tensor product $S_{\lambda}(\cE_j^\ell)^\vee\boxtimes N$, we have 
\bd
\mathbb{R}(p_{\Ms})_*\big((S_{\lambda}(\cE^\ell)^\vee\boxtimes N)\otimes (p_{\overline{\Ms}})^*\cG\big)
\simeq S_{\lambda}(\cE_j^\ell)^\vee \otimes U^\bullet
\ed
for some bounded complex $U^\bullet$ of finite-dimensional vector spaces.   
Using \eqref{Koszul} and the conclusion of the previous paragraph, we find that 
$\cG|_{\Ms}$ lies in the subcategory of $D_{\on{coh}}(\Ms)$ that is generated, under the operations (i)-(iii) of assertion (2) of Theorem \ref{derived cat}, by the $S_{\lambda}(\cE_j^\ell)^\vee$, where the Schur functors that appear are exactly those used in writing all the $\bigwedge^k(\mathcal{H})$ as above.
\end{proof}

\end{document}